\documentclass[12pt]{amsart}
\usepackage{color}
\usepackage{amsmath,amsthm, amscd, amssymb, amsfonts,pb-diagram}
\usepackage[all,cmtip]{xy}
\usepackage{enumitem}
\numberwithin{equation}{section}

\headheight=5pt \headsep=18pt
\footskip=18pt
\textheight=47pc \topskip=10pt
\textwidth=37pc
\calclayout

\newcommand{\End}{\mbox{\rm End\,}}
\theoremstyle{plain}

\newtheorem{theorem}{Theorem}[section]
\newtheorem{corollary}[theorem]{Corollary}
\newtheorem{proposition}[theorem]{Proposition}
\newtheorem{lemma}[theorem]{Lemma}

\theoremstyle{definition}
\newtheorem{definition}[theorem]{Definition}

\newtheorem{ejem}[theorem]{Example}

\theoremstyle{remark}
\newtheorem{remark}[theorem]{Remark}

\newcounter{commentcounter}


\newcommand{\So}{{\mathcal{S} }}
\newcommand{\Fot}{{\overset{F}{\otimes}}}
\newcommand{\N}{{\mathcal N}}
\newcommand{\C}{{\mathcal C}}
\newcommand{\D}{{\mathcal D}}
\newcommand{\M}{{\mathcal M}}

\newcommand{\Tt}{\ensuremath{{k^{\times}}}}
\newcommand{\END}{\mathcal{E}nd}
\newcommand\id{\operatorname{id}}
\newcommand\Aut{\operatorname{Aut}}

\newcommand{\ot}{\otimes}
\newcommand\ad{\operatorname{ad}}

\newcommand{\fde}{{\triangleright}}
\newcommand{\fiz}{{\triangleleft}}
\newcommand{\Z}{\mathcal{Z}}

\newcommand\op{\operatorname{op}}
\newcommand\Hom{\operatorname{Hom}}
\newcommand\Rep{\operatorname{Rep}}

\newcommand\Ind{\operatorname{Ind}}

\newcommand\vect{\operatorname{Vec}}

\newcommand\Out{\operatorname{Out}}
\newcommand{\TY}{{\mathcal {TY}}}
\newcommand\Ad{\operatorname{Ad}}
\newcommand\Inn{\operatorname{Inn}}
\newcommand\Stab{\operatorname{Stab}}
\newcommand\Inv{\operatorname{Inv}}
\newcommand\BrPic{\operatorname{BrPic}}
\newcommand\Fun{\operatorname{Fun}}

\begin{document}
\title{Tensor functors between Morita duals of fusion categories}
\author{C\'esar Galindo}
\email{cn.galindo1116@uniandes.edu.co}
\address{Departamento de Matematicas, Universidad de los Andes, Carrera 1 N. 18A -10, Bogot\'a, Colombia}
\author{Julia Yael Plavnik}
 \email{ julia@math.tamu.edu}
\address{Department of Mathematics,
    Texas A\&M University,
    College Station, TX
    U.S.A.}

\thanks{C.Galindo would like to thank the hospitality of the Mathematics Departments at MIT
where part of this work was carried out. This project began while J.Plavnik was at Universidad de Buenos Aires, and the support of that institution is gratefully acknowledged. C. Galindo  was partially supported by the FAPA funds from vicerrectoria de investigaciones de la Universidad de los Andes. J Plavnik was partially supported by CONICET, ANPCyT  and Secyt (UNC)} \subjclass[2010]{16T05, 18D10} \keywords{Fusion categories, Brauer-Picard group, module categories, tensor functors}
\date{}

\begin{abstract}
Given a fusion category $\C$ and an indecomposable $\C$-module category $\M$,
the fusion category $\C^*_\M$ of $\C$-module endofunctors of $\M$ is called the (Morita) dual fusion category of $\C$ with respect to $\M$. We describe tensor functors between two arbitrary duals $\C^*_\M$ and $\D^*_\N$ in terms of data associated to $\C$ and $\D$. We apply the results to $G$-equivariantizations of fusion categories and group-theoretical fusion categories. We describe the orbits of the action of the Brauer-Picard group on the set of module categories and we propose a categorification of the Rosenberg-Zelinsky sequence for fusion categories.
\end{abstract}

\maketitle

\setcounter{tocdepth}{1}

\section{Introduction}
{\bf 1.} In this paper $k$ will denote an algebraically closed field
of characteristic zero. By a \emph{fusion category} we mean a 
$k$-linear semisimple rigid tensor category with finitely many
isomorphism classes of simple objects and simple unit object {\bf
1.} For further reading we recommend \cite{BK, EGNO}.
\smallbreak
The set of isomorphism classes of invertible objects of a fusion
category $\C$ forms a group with multiplication induced by the
tensor product that we will denote by $\Inv(\C)$. A fusion
category is called \emph{pointed} if all its simple objects are
 invertible. A pointed fusion category $\C$ is
equivalent to $\vect_G^\omega$, that is, the category of  $G$-graded
finite dimensional vector spaces, where $G=\Inv(\C)$ and the
associativity constraint is given by a $3$-cocycle $\omega \in H^3(G,k^{\times})$.
\smallbreak
A very useful technique for the characterization of fusion
categories is the categorical Morita equivalence, \cite{Nik-surv}. Given a fusion category $\C$ and an indecomposable
left $\C$-module category $\M$, the fusion category
$\C_\M^*:=\END_\C(\M)$ is called the \emph{(Morita/categorical) dual} of $\C$ with respect to $\M$.
Important constructions in fusion category theory, such as the
Drinfeld's center and $G$-equivariantization of fusion categories, can be seen
as special cases of categorical duality. Also, fundamental examples
of fusion categories, such as group-theoretical  \cite{ENO,ostrik} and weakly
group-theoretical fusion categories \cite{ENO2}, are defined in terms of categorical duals.
\smallbreak
For pointed fusion categories, there is a very simple and concrete description of tensor functors and tensor natural transformations using group cohomology and group homomorphisms.
However, such a description for group-theoretical fusion categories (i.e.,  Morita duals of pointed fusion categories) is currently lacking\footnote{It
corresponds to problem 10.1 http://aimpl.org/fusioncat/10/ posted by
Shlomo Gelaki.}.

\medbreak {\bf 2.} The goal of this paper is to describe functors between two arbitrary duals, $\C^*_\M$ and $\D_\N^*$, in terms of data associated with $\C$ and $\D$. We  then apply the results to group-theoretical fusion categories and equivariantizations.
\smallbreak

Let $\mathfrak{Funct}$ be the category whose \textbf{objects} are pairs $(\C,\M)$, where $\C$ is a fusion
category and $\M$ is an indecomposable left $\C$-module category,
and \textbf{arrows} from $(\C,\M)$ to $(\D,\N)$ are equivalence
classes of monoidal functors from $\C^*_\M$ to $\D_\N^*$. The
\textbf{composition} of arrows is the equivalence class of the usual
composition of monoidal functors.

With the notation above, the category of 
group-theoretical fusion categories and equivalence classes of functors between them is equivalent to the
subcategory of  $\mathfrak{Funct}$ whose objects are pairs
$(\C,\M)$, with $\C$ a pointed fusion category \cite{ostrik}.


In order to describe $\mathfrak{Funct}$ in terms
of data associated with $\C$ and $\D$, we introduce the category
$\mathfrak{Cor}$:
\begin{itemize}
\item[(1)] \textbf{Objects} are pairs $(\C,\M)$, where $\C$ is a fusion category and $\M$ is an indecomposable left semisimple $\C$-module category.
\item[(2)] \textbf{Arrows} from $(\C,\M)$ to $(\D,\N)$ are equivalence classes of  pairs $(\So,
\alpha)$, where $\So$ is a $(\C,\D)$-bimodule category and
$\alpha:\So\boxtimes_\D\N \to \M$ is an equivalence of left
$\C$-module categories. Two pairs  $(\So, \alpha)$ and $(\So',
\alpha')$ represent the same arrow from $(\C,\M)$ to $(\D,\N)$  if
there exists a pair $(\phi,a)$, where $\phi:\So\to \So'$ is a
$(\C,\D)$-bimodule equivalence and $a$ is a natural isomorphism of
left $\C$-module functors from $\alpha$ to $\alpha'\circ
(\phi\boxtimes_{\D}\N)$, see the following diagram

\begin{eqnarray} \label{2-isomorphism condition for morphism in
pseudomoids}
 \xy
 (-12,0)*+{\So'\boxtimes_\D \N}="L";
 (12,0)*+{\mathcal{M}}="R";
 (0,16)*+{\So\boxtimes_\D \N}="T";
    {\ar_{\phi\boxtimes_{\D}\N} "T";"L"};
    {\ar^{\alpha} "T";"R"};
    {\ar_{\alpha'} "L";"R"};
    {\ar@{=>}^{a} (-2,2);(4,7)}
 \endxy.
\end{eqnarray}
\end{itemize}
If $(\So,\alpha)\in \mathfrak{Cor}((\C,\M),(\D,\N))$ and
$(\mathcal{P},\beta)\in \mathfrak{Cor}((\D,\N),
(\mathcal{L},\mathcal{T}))$ are arrows, the composition is
$$(\So,\alpha)\odot (\mathcal{P},\beta)= (\So\boxtimes_\D
\mathcal{P}, \alpha\odot \beta)\in \mathfrak{Cor}((\C,\M),
(\mathcal{L},\mathcal{T})),$$ where $\alpha\odot \beta$ is given by
the commutativity of the following diagram
$$
\begin{diagram}
\node{(\So\boxtimes_\D
\mathcal{P})\boxtimes_{\mathcal{L}}\mathcal{T}}
\arrow{s,l}{a_{\So,\mathcal{P},\mathcal{T}}} \arrow{e,t}{\alpha\odot \beta} \node{\M}\\
\node{\So\boxtimes_\D
(\mathcal{P}\boxtimes_{\mathcal{L}}\mathcal{T})}
\arrow{e,t}{\id_{\So}\boxtimes_\D \beta}\node{\So\boxtimes_\D \N}
\arrow{n,r}{\alpha}
\end{diagram}
$$
Our first main result is an explicit description of tensor functors between categorical duals in terms of $\mathfrak{Cor}$.
\begin{theorem}\label{description of Funct}
The category $\mathfrak{Funct}$ is (contravariantly) equivalent to the category
$\mathfrak{Cor}$. Moreover, at the level of objects the equivalence is given by the identity.
\end{theorem}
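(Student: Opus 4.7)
The plan is to construct a contravariant functor $\Phi:\mathfrak{Cor}\to \mathfrak{Funct}$ that is the identity on objects, and to exhibit an explicit quasi-inverse $\Psi$ using the Morita double-centralizer theorem for module categories. On morphisms, I send $(\So,\alpha)\in \mathfrak{Cor}((\C,\M),(\D,\N))$ to the class of the tensor functor
$$\D^*_\N\longrightarrow \C^*_\M,\qquad G\longmapsto \alpha\circ (\id_{\So}\boxtimes_\D G)\circ \alpha^{-1}.$$
The target lies in $\End_\C(\M)=\C^*_\M$ because $\id_\So\boxtimes_\D G$ is a left $\C$-module endofunctor of $\So\boxtimes_\D \N$ (the $\C$-structure comes from the left $\C$-action on $\So$) and $\alpha$ is a left $\C$-module equivalence; monoidality follows because $\boxtimes_\D$ preserves composition. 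A 2-morphism $(\phi,a)$ induces a monoidal natural isomorphism between the two associated tensor functors by conjugation with $\phi\boxtimes_\D \id_\N$ and $a$, so $\Phi$ is well-defined on equivalence classes. Contravariance on composition,
$$\Phi((\So,\alpha)\odot (\mathcal{P},\beta))\cong \Phi(\So,\alpha)\circ \Phi(\mathcal{P},\beta),$$
unwinds directly from the defining diagram for $\alpha\odot \beta$ and the associator $a_{\So,\mathcal{P},\mathcal{T}}$.

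For the quasi-inverse, I appeal to the Morita theory of indecomposable module categories (Etingof--Nikshych--Ostrik): associated to $(\C,\M)$ is a dual $(\C^*_\M,\C)$-bimodule category, which I denote $\M^\vee$, equipped with canonical equivalences $\M^\vee\boxtimes_\C \M\simeq \C^*_\M$ of $(\C^*_\M,\C^*_\M)$-bimodules and $\M\boxtimes_{\C^*_\M}\M^\vee\simeq \C$ of $(\C,\C)$-bimodules. Given a tensor functor $F:\C^*_\M\to \D^*_\N$ in $\mathfrak{Funct}((\C,\M),(\D,\N))$, I view $\N$ as a $(\D,\C^*_\M)$-bimodule by pulling back through $F$ the right $\D^*_\N$-action, set
$$\So_F := \N\boxtimes_{\C^*_\M}\M^\vee,$$
a $(\D,\C)$-bimodule category, and let $\alpha_F:\So_F\boxtimes_\C\M\simeq \N$ be the composite
$$\N\boxtimes_{\C^*_\M}(\M^\vee\boxtimes_\C\M)\simeq \N\boxtimes_{\C^*_\M}\C^*_\M\simeq \N.$$
I put $\Psi(F):=(\So_F,\alpha_F)\in \mathfrak{Cor}((\D,\N),(\C,\M))$; monoidal natural isomorphisms $F\cong F'$ yield bimodule equivalences of the corresponding $\So_F$'s, so $\Psi$ is well-defined on classes.

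Finally I verify $\Psi\circ \Phi\simeq \id$ and $\Phi\circ \Psi\simeq \id$. For the first, given $(\So,\alpha)$ the $\D^*_\N$-action on $\M$ obtained via $\Phi(\So,\alpha)$ is $\alpha\circ(\id_\So\boxtimes_\D-)\circ \alpha^{-1}$, so the underlying bimodule of $\Psi\Phi(\So,\alpha)$ is
$$\M\boxtimes_{\D^*_\N}\N^\vee\;\simeq\;(\So\boxtimes_\D \N)\boxtimes_{\D^*_\N}\N^\vee\;\simeq\;\So\boxtimes_\D (\N\boxtimes_{\D^*_\N}\N^\vee)\;\simeq\;\So\boxtimes_\D \D\;\simeq\;\So$$
as $(\C,\D)$-bimodules, and the equivalence matches $\alpha$ with $\alpha_{\Phi(\So,\alpha)}$ by construction. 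A parallel computation, now using the identity $\M^\vee\boxtimes_\C\M\simeq \C^*_\M$, gives $\Phi\circ \Psi\simeq \id$. I anticipate that the main obstacle will be the coherence check throughout: the associators and unit constraints for $\boxtimes$, together with the Morita equivalences, must interact so that all induced natural transformations are monoidal on the $\Phi$-side and bimodule on the $\Psi$-side, and so that the 2-morphism data $(\phi,a)$ in $\mathfrak{Cor}$ is in bijection with classes of monoidal natural isomorphisms in $\mathfrak{Funct}$. This 2-categorical bookkeeping, while conceptually straightforward, is where the proof is most technically delicate.
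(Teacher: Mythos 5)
Your proposal is correct and lives in the same circle of ideas as the paper's proof, but the two are organized differently. Your functor $\Phi$ on morphisms, $G\mapsto \alpha\circ(\id_{\So}\boxtimes_\D G)\circ\alpha^{-1}$, is exactly the assignment the paper denotes $\ad_\alpha\circ L^*$ and uses as the hom-level inverse; the difference is in how the other direction is built and how the equivalence is concluded. The paper constructs $\mathcal{K}:\mathfrak{Funct}\to\mathfrak{Cor}$ by a functor-category recipe, $\So_F=\Fun_{\END_\D(.\N)}(.\N,.\M^F)$ together with the evaluation equivalence $\varepsilon:\So_F\boxtimes_\D\N\to\M$ coming from the Etingof--Ostrik $2$-equivalence (its Theorem \ref{epsilon equivalencia}), and then concludes by showing $\mathcal{K}$ is full and faithful and the identity on objects, so no composite $\Psi\circ\Phi$, $\Phi\circ\Psi$ ever has to be computed. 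You instead realize the same construction in tensor-product form, $\Psi(F)=\N\boxtimes_{\C^*_\M}\M^\vee$ with $\M^\vee$ the dual (opposite) bimodule, invoking the invertibility of $\M$ as a $(\C,\C^*_\M)$-bimodule (the double-centralizer equivalences, i.e.\ \cite[Proposition 4.2]{ENO3}/\cite{EO}), and then verify both composites by one-line associativity computations with $\boxtimes$. What the paper's route buys is that the only nontrivial input is the counit equivalence $\varepsilon$ and the standard full-faithful-plus-bijective-on-objects criterion; what your route buys is that both composites become transparent, at the cost of more bookkeeping with opposite categories, balanced structures, and the monoidality/bimodule coherence you correctly flag, and of one small omission you should repair: when checking that $\Psi$ is constant on equivalence classes, a monoidal isomorphism $F\cong F'$ must be shown to produce not only a bimodule equivalence $\So_F\simeq\So_{F'}$ but also the natural isomorphism $a$ relating $\alpha_F$ to $\alpha_{F'}$ through it (the paper does the analogous check for its $\mathcal{K}$). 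These points are at the same level of detail that the paper itself leaves implicit, so I regard your argument as a correct variant rather than as having a gap.
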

\begin{remark}
The categories  $\mathfrak{Funct}$ and $\mathfrak{Cor}$ are truncations of bicategories.
We expect that the category equivalence of Theorem \ref{description of Funct}
comes from a truncation of a biequivalence of bicategories.
\end{remark}
Since every fusion category is dual to itself, the description of tensor functors between an arbitrary pair of Morita duals is very general. In fact, such a description includes the classification of tensor functors between any pair of fusion categories. However, Theorem \ref{description of Funct} has nontrivial consequences, e.g.,  Theorems \ref{coro doble cosets}, \ref{teorema Equivalent Equivariantizations} and Corollary \ref{Corol centro gt}). In particular using Theorem \ref{description of Funct}, we get an implicit group-theoretical description of all tensor functors between group-theoretical fusion categories.

\medbreak {\bf 3.} Let $\C$ be a fusion category. The group of
equivalence classes of invertible $\C$-bimodule categories is called
the \emph{Brauer-Picard group} and is denoted $\text{BrPic}(\C)$.
The Brauer-Picard group of a fusion category is the fundamental group
of a categorical 2-group, 
$\underline{\underline{\text{BrPic}}}(\C)$, that parametrizes the
extensions of a fusion category $\C$ by finite groups, 
\cite{ENO3}. An interesting problem is to explicitly compute the Brauer-Picard groups for specific fusion categories. Some results of this type were obtained in \cite{ENO3,GS,M2,NR}.

Below we will describe some applications of Theorem
\ref{description of Funct} to the Brauer-Picard group of a fusion
category. \medbreak

{\bf 3.1} The first application of Theorem \ref{description of Funct} is a categorification of the Rosenberg-Zelinsky
exact sequence. Let $\C$ be a fusion category and consider the
abelian group of (isomorphism classes of) invertible objects
$\operatorname{Inv}(\mathcal{Z}(\C))$ of the Drinfeld's center
$\mathcal{Z}(\C)$ of $\C$. For every $\C$-module category $\M$ we
have a group homomorphism
\begin{align*}
s:\Inv(\mathcal{Z}(\C))\to \Aut_\C(\M).
\end{align*}
For each $(X,c_{X,-})\in \Inv(\mathcal{Z}(\C))$, we define $(s_X,\gamma)\in \Aut_\C(\M)$  by  $s_X(M):= X\otimes M$ and $\gamma_{V,M}: s_X(V\otimes M)\to
V\otimes s_X(M)$, where $\gamma_{V,M}:=c_{X,V}\otimes
\id_{M}$, for all $V\in \C$, $M\in \M$.

Recall that by Theorem \ref{description of Funct}, there is an (contravariant) equivalence $\mathcal{K}:\mathfrak{Funct}\to \mathfrak{Cor}$ that is the identity at the level of objects. Given an arrow $(\So,\alpha )\in \mathfrak{Cor}$, we denote by $\pi_1(\So,\alpha)$ the projection to the first component. With this notation our second main result is a categorification of the Rosenberg-Zelinsky exact sequence.
\begin{theorem}\label{RZ}
The sequence of groups
$$1\to\ker(s)\to \Inv(\mathcal{Z}(\C))\overset{s}{\to} \Aut_{\C}(\M) \overset{\operatorname{conj}_\M}{\to} \Aut_\otimes
(\C_\M^*)\overset{\pi_1\circ \mathcal{K}}{\to} \operatorname{BrPic}(\C),$$
is exact.
\end{theorem}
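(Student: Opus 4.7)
The first two positions ($\ker(s)$ and $\Inv(\mathcal{Z}(\C))$) are exact by construction, so only the exactness at $\Aut_\C(\M)$ and at $\Aut_\otimes(\C_\M^*)$ requires work. The plan rests on two standard inputs: (i) the canonical equivalence of braided fusion categories $\mathcal{Z}(\C)\simeq \mathcal{Z}(\C_\M^*)\simeq \END_{\C,\C_\M^*}(\M)$, where the right-hand side denotes $(\C,\C_\M^*)$-bimodule endofunctors of $\M$; and (ii) an explicit description of the equivalence $\mathcal{K}$ from Theorem \ref{description of Funct} on tensor autoequivalences of $\C_\M^*$.

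\emph{Exactness at $\Aut_\C(\M)$.} A tensor isomorphism $\operatorname{conj}_\M(F)\cong \id_{\C_\M^*}$ unpacks precisely to a coherent family of natural isomorphisms $F\circ G\cong G\circ F$, for $G\in \C_\M^*$, compatible with composition. Combined with the $\C$-linearity of $F$, this is exactly the data making $F$ an invertible $(\C,\C_\M^*)$-bimodule endofunctor of $\M$. By (i), such $F$ correspond to invertible objects of $\mathcal{Z}(\C)$, and one checks that a central pair $(X,c_{X,-})$ corresponds to the bimodule functor $s_X = X\otimes-$ in which the natural isomorphism $s_X\circ G\cong G\circ s_X$ is supplied by the $\C$-linearity of $G$. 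Forgetting the bimodule structure, i.e.\ passing to $\Aut_\C(\M)$, is by definition the map $s$; hence $\operatorname{image}(s)=\ker(\operatorname{conj}_\M)$.

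\emph{Exactness at $\Aut_\otimes(\C_\M^*)$.} Applying Theorem \ref{description of Funct} with $(\C,\M)=(\D,\N)$, a tensor autoequivalence $H$ of $\C_\M^*$ corresponds (up to the equivalence relation defining $\mathfrak{Cor}$) to a pair $(\So,\alpha)$ consisting of a $(\C,\C)$-bimodule category $\So$ and a $\C$-module equivalence $\alpha:\So\boxtimes_\C \M\to \M$; the map $\pi_1\circ \mathcal{K}$ records $[\So]\in \BrPic(\C)$. For $H=\operatorname{conj}_\M(F)$, a direct inspection of the construction of $\mathcal{K}$ produces the pair $(\C,F)$, with $\C$ the regular bimodule and $F$ viewed via the canonical equivalence $\C\boxtimes_\C \M\simeq \M$; thus $[\So]=[\C]$ is trivial. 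Conversely, if $[\So]$ is trivial, fix a bimodule equivalence $\psi:\C\xrightarrow{\sim}\So$ and put $F_H := \alpha\circ (\psi\boxtimes_\C \id_\M)\in \Aut_\C(\M)$; running $\mathcal{K}$ in reverse yields $H\cong \operatorname{conj}_\M(F_H)$. Two choices of $\psi$ differ by an invertible bimodule endofunctor of $\C$, i.e.\ by an element of $\Inv(\mathcal{Z}(\C))$, whose effect on $F_H$ is multiplication by an element of $\operatorname{image}(s)\subseteq \ker(\operatorname{conj}_\M)$, so the class in $\Aut_\otimes(\C_\M^*)$ is independent of $\psi$.

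The main obstacle is (ii): one must trace through the proof of Theorem \ref{description of Funct} carefully enough to recognize $\mathcal{K}(\operatorname{conj}_\M(F))$ as the pair $(\C,F)$, since $\mathcal{K}$ is defined in terms of general bimodule categories rather than module autoequivalences. Once this case is pinned down, both non-trivial exactness statements reduce to (i), (ii), and routine coherence diagrams.
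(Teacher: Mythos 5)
Your proposal is correct, and at the second exactness spot it is essentially the paper's argument: the paper also identifies $\ker(\pi_1\circ\mathcal{K})$ with the image of $\operatorname{conj}_\M$ by showing that $\mathcal{K}(\operatorname{conj}_\M(F))$ is represented by $(\C,\operatorname{Id}_\C\boxtimes_\C F)$ (their map $\Omega$), that $\Omega$ hits every class $(\So,\alpha)$ with $\So\cong\C$ as bimodules (your $F_H:=\alpha\circ(\psi\boxtimes_\C\id_\M)$ is exactly this surjectivity argument), and then uses faithfulness of $\mathcal{K}$; the key identification you flag as ``the main obstacle'' is asserted in the paper at the same level of detail, via the commutative diagram following from Theorem \ref{description of Funct}, so you are on equal footing there. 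Where you genuinely diverge is exactness at $\Aut_\C(\M)$. The paper stays inside $\mathfrak{Cor}$: it computes $\ker(\Omega)$ by noting that $\Omega(F)=\id_{(\C,\M)}$ forces $F\cong\phi\boxtimes_\C\M$ for an invertible $\C$-bimodule endofunctor $\phi$ of the regular bimodule $\C$, and that such $\phi$ are exactly $X\otimes(-)$ for a unique $X\in\Inv(\mathcal{Z}(\C))$, whence $\ker(\operatorname{conj}_\M)=\ker(\Omega)=\operatorname{Im}(s)$. You instead invoke the Morita invariance of the Drinfeld center, $\mathcal{Z}(\C)\simeq\Fun_{\C\mid\C^*_\M}(\M,\M)$, and unpack a monoidal isomorphism $\operatorname{conj}_\M(F)\cong\id$ as a $(\C,\C^*_\M)$-bimodule structure on $F$; the unpacking is right (the requirement that each $\eta_G$ be a $\C$-module natural transformation supplies the compatibility between the two module-functor structures), and it does identify $\ker(\operatorname{conj}_\M)$ with $\operatorname{Im}(s)$ directly, without routing this step through $\mathfrak{Cor}$ at all. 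The trade-off: the paper's input is more elementary (bimodule endofunctors of $\C$ itself, i.e.\ the classical description of $\mathcal{Z}(\C)$) and self-contained given its Theorem \ref{description of Funct} machinery, while your route imports a stronger standard theorem not otherwise used in the paper, but makes the kernel computation at $\M$ conceptually transparent and independent of the correspondence $\mathcal{K}$.
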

For the case  $\M= \C$, the exact sequences of Theorem \ref{RZ} can be rewritten as follows:
$$1\to\Aut_{\otimes}(\id_{\C})\to \Inv(\mathcal{Z}(\C))\overset{s}{\to} \Inv(\C)\overset{\operatorname{conj}_\C}{\to} \Aut_\otimes(\C)\overset{\pi_1\circ \mathcal{K}}{\to}  \operatorname{BrPic}(\C).$$

The image of $\pi_1\circ \mathcal{K}$ is denoted by
$\Out_\otimes(\C)$ and is called the \textit{group of tensor outer-autoequivalences of $\C$},  \cite{Ga2}. The image of $\operatorname{conj}_\C$ is denoted by 
$\operatorname{Inn}(\C)$ and is called \textit{the group of inner-autoequivalence of $\C$}.

Thus we have the exact sequence

\[1\to \operatorname{Inn}(\C) \to \Aut_\otimes(\C) \to \Out_\otimes(\C) \to 1.\]

In event that $\C$ has a braided structure the sequence is just an
inclusion $\Aut_{\otimes}(\C)\hookrightarrow \operatorname{BrPic}(\C)$, see
Corollary \ref{corolario RZ caso trenzado}.

\medbreak {\bf 3.2} 
We define $\mathcal{T}(\C)$ to be the set of equivalence classes of right
$\C$-module categories $\M$ such that $\C\cong \C_\M^*$ as fusion
categories. Note that the sets $ \Out_\otimes(\C) \setminus \operatorname{BrPic}(\C)$ and
$\mathcal{T}(\C)$ are right  $\operatorname{BrPic}(\C)$-sets in a natural way.

By \cite[Proposition 4.2]{ENO3} we have a map $$\operatorname{BrPic}(\C)\to
\mathcal{T}(\C),$$ given by forgetting the left $\C$-module structure.
This map factors through the left action  of
$\operatorname{Out}_\otimes(\C)$, thus we have a  map
$$U: \operatorname{Out}_\otimes(\C) \setminus \operatorname{BrPic}(\C) \to \mathcal{T}(\C).$$
\begin{theorem}\label{coro doble cosets}
\begin{itemize}
\item[(1)] The map $U$ is a bijective map of $\operatorname{BrPic}(\C)$-sets.
\item[(2)] $|\operatorname{BrPic}(\C)|=|\operatorname{Out}_\otimes(\C)| |\mathcal{T}(\C)|$.
\end{itemize}
\end{theorem}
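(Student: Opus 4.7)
The plan is to prove a strengthening of the theorem, namely that the map $U\colon \Out_\otimes(\C)\setminus\BrPic(\C)\to \mathcal{T}(\C)$ is itself a bijection of right $\Out_\otimes(\C)$-sets. Both assertions then drop out: part (2) from the elementary identity $|\BrPic(\C)|=|\Out_\otimes(\C)|\cdot|\Out_\otimes(\C)\setminus\BrPic(\C)|$, and part (1) by taking right $\Out_\otimes(\C)$-quotients on both sides of the bijection. Equivariance of $U$ is immediate because on the bimodule side the right $\Out_\otimes(\C)$-action is simply right multiplication in $\BrPic(\C)$, and forgetting the left $\C$-action manifestly commutes with right tensoring by bimodules.

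Surjectivity of $U$ is already furnished by \cite[Proposition 4.2]{ENO3}. For injectivity, suppose $\So,\So'\in\BrPic(\C)$ satisfy $U(\So)=U(\So')$, so that there is a right $\C$-module equivalence $\Phi\colon \So\xrightarrow{\sim}\So'$. Transporting the left $\C$-action on $\So'$ back along $\Phi$ endows the underlying category of $\So$ with a second bimodule structure $\So''$, which is equivalent as a bimodule to $\So'$ by construction. The original and transported left actions on $\So$ correspond to two tensor equivalences $\alpha,\alpha'\colon\C\to\C^*_{\So}$, and their comparison $E:=(\alpha')^{-1}\circ\alpha$ is a tensor autoequivalence of $\C$. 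Unwinding definitions, $\So''$ is the left-twist of $\So$ by $E$, so $[\So']=[\So'']=[E]\cdot[\So]$ in $\BrPic(\C)$. By Theorem \ref{RZ}, the image of $\Aut_\otimes(\C)\to\BrPic(\C)$ is exactly $\Out_\otimes(\C)$, hence $[E]$ lies in $\Out_\otimes(\C)$ and $\So,\So'$ represent the same left coset.

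The principal obstacle is verifying that the left-twist operation on invertible bimodules matches the subgroup action of $\Out_\otimes(\C)\subset\BrPic(\C)$ coming from the last map in Theorem \ref{RZ}. I would handle this by appealing to Theorem \ref{description of Funct}: each left $\C$-action making $\So$ invertible corresponds, under the equivalence $\mathcal{K}\colon\mathfrak{Funct}\to\mathfrak{Cor}$, to an arrow encoding the tensor equivalence $\C\to\C^*_{\So}$, and composition of tensor equivalences is translated by $\mathcal{K}$ into composition of arrows in $\mathfrak{Cor}$, which is precisely the relative tensor product of bimodules. This reduces the required identification to a direct check inside $\mathfrak{Cor}$. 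With the bijection $\Out_\otimes(\C)\setminus\BrPic(\C)\cong\mathcal{T}(\C)$ in hand, parts (1) and (2) follow at once by the counting and quotient arguments of the first paragraph.
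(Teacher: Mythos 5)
Your proposal is correct, and its skeleton agrees with the paper's: both prove the stronger statement that $U\colon \Out_\otimes(\C)\setminus\operatorname{BrPic}(\C)\to\mathcal{T}(\C)$ is a bijection and then deduce (1) by passing to right $\Out_\otimes(\C)$-quotients and (2) by counting left cosets. The difference lies in how bijectivity is obtained. The paper notes that both sides are transitive right $\operatorname{BrPic}(\C)$-sets and that $U$ is equivariant, and then asserts bijectivity in one line; the content hidden in that assertion is precisely a stabilizer identification, namely that an invertible bimodule whose underlying right $\C$-module agrees with that of $\So$ differs from $\So$ only by a quasi-trivial bimodule. Your surjectivity-plus-injectivity argument makes this explicit: surjectivity from \cite[Proposition 4.2]{ENO3}, and injectivity by transporting the left action along a right $\C$-module equivalence $\Phi\colon\So\to\So'$, which exhibits $\So'$ as a left twist ${}^{E}\So$ for a tensor autoequivalence $E$ of $\C$, whose class acts through $\Out_\otimes(\C)$. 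For the step you single out as the principal obstacle, the detour through $\mathfrak{Cor}$ and Theorem \ref{description of Funct} is unnecessary: the paper records at the start of the subsection the bimodule equivalences $\C^\sigma\boxtimes_{\C}{}^{\sigma}\M\cong\M$ and $\M\boxtimes_{\C}\C^\sigma\cong\M^\sigma$, which immediately identify the left twist with left multiplication by a quasi-trivial bimodule, and the class of such a bimodule lies in $\Out_\otimes(\C)$ by definition (equivalently, by Remark \ref{RZ-Inn} it is the image of $\Aut_\otimes(\C)$, as you invoke via Theorem \ref{RZ}). So your route is a somewhat longer but sound version of the paper's argument, and its added value is that it actually verifies the stabilizer equality that the paper's transitivity argument leaves implicit.
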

As an application of Theorem \ref{coro doble cosets} we  compute the
Brauer-Picard group of some Tamabara-Yamagami categories, as well as some pointed fusion
categories with non-trivial associator.

Also we  present an algorithmic procedure to reduce the calculation
of the Brauer-Picard group to the computation of $\operatorname{Out}_\otimes(\C)$ and
some extra data that can be obtained using only a transversal of $ \mathcal{T}(\C) /\operatorname{Out}_\otimes(\C),$ see
Section \ref{Generalized crossed product}. \medbreak

{\bf 4.} \ In the remainder of the paper we focus on equivariantizations of fusion
categories and group-theoretical fusion categories. In particular, we  provide a description of all tensor equivalences between
equivariantizations of fusion categories. We also describe
invertible bimodule categories and their tensor products over
arbitrary pointed fusion categories. Our aim is to provide all
necessary ingredients for the application of Theorem
\ref{description of Funct} to any concrete example of
group-theoretical fusion categories.
\medbreak {\bf 5.}  \
The paper is organized as follows. In Section
\ref{preliminares} we  review module and bimodule categories over fusion categories.
In Section \ref{Morita revised} we will recall some tensor equivalences related with some dual categories and
the module structure induced by a tensor functor. In Section \ref{Seccion demotracion main theorem}
we will prove Theorem \ref{description of Funct}.
In Section \ref{Seccion equivalencis entre equivariantizaciones} we  give a
characterization of tensor equivalences between dual categories in terms of
certain data. We also study tensor functors between equivariantizations of fusion categories.
In particular, we give a description of tensor equivalences of Drinfeld's centers of pointed fusion categories.
In Section \ref{Seccion aplicaciones BrPic} we show the exactness of the Rosenberg-Zelinsky
sequence and give a proof of Theorem \ref{coro doble cosets}.
In Section \ref{Seccion bimodules de pointed fusion cat}
we review the theory of $G$-sets and study
module categories over pointed categories and their tensor products. We use this 
to describe tensor functors between group-theoretical fusion categories.
In Appendix \ref{Apendice} we give an explicit description of certain  2-subcategory of all module categories over a pointed fusion category that is useful for alternative proofs to some well known results about group-theoretical fusion categories.

\smallbreak\subsubsection*{Acknowledgements} We thank Paul Bruillard and the referee for useful comments.

\section{Preliminaries}\label{preliminares}

\subsection{Modules categories and Morita equivalence}

Let $\C$ be a fusion category over $k$. A \emph{module category}
over $\C$ is a semisimple category $\M$ together with a biexact
functor $\otimes : \C \times \M \to \M$ satisfying natural
associativity and unit axioms.

A module category $\M$ over $\C$ is called \emph{indecomposable} if
it is not equivalent to a nontrivial direct sum of module
categories.  We refer the reader to \cite{ostrik1} for further reading.

Let $\M$ and $\N$ be left (respectively, right) module categories
over $\C$. We will denote by $\Fun_\C(.\M,.\N)$ (respectively,
$\Fun_{\C}(\M.,\N.)$) the category whose objects are $\C$-module
functors from $\M$ to $\N$, and whose morphisms are natural module
transformations between these functors. If it is necessary we will use
dots in order to distinguish between left and right structures.  In the particular case
$\M = \N$, we will use the notation
$\END_\C(\M):=\Fun_\C(\M,\M)$. It follows from \cite[Theorem
2.15]{ENO} that $\END_\C(\M)$ is a fusion category if and only if $\M$ is an indecomposable $\C$-module category.

Let $\M$ be a  module category over $\C$.
The {\em (Morita) dual cate\-gory} $\C^*_{\M}$ of $\C$ with respect to $\M$
is the tensor category $\END_\C(\M)$.

Two fusion categories $\C$ and $\D$ are \emph{Morita equivalent} if
there exists an indecomposable $\C$-module category, $\M$, such that
$\D^{\operatorname{op}} \cong_{\otimes} (\C_\M^*)$. Recall that the opposite tensor category $\D^{\operatorname{op}}$ of a given tensor category $\D$ is obtained from $\D$ by reversing the tensor product.

\subsection{Bimodules categories}

Let $\C$ and $\D$ be fusion categories. A \emph{$(\C,\D)$-bimodule category} is
both, a left $\C$-module category and a right $\D$-module category
such that these two actions are compatible. Equivalently, a
$(\C,\D)$-bimodule category is a module category over
$\C\boxtimes\D^{\operatorname{op}}$, where $\boxtimes$ is the
Deligne tensor product of abelian categories, see \cite{D} for a precise definition of Deligne tensor product.

If $\M$ is a $(\C, \D)$-bimodule category then its opposite category
$\M^{\operatorname{op}}$ is a $(\D, \C)$-bimodule category with actions
 given by $X\odot M = M \otimes X^*$ and $M\odot Y= Y^*\otimes M $, for all  $X\in \C, Y\in \D, M\in \M$,  \cite[subsection
2.9]{ENO3}.

Let $\M$ and $\N$ be left and right $\C$-module categories, respectively, the
tensor product $\M\boxtimes_\C\N$ was defined in \cite{ENO3}.

\begin{remark}\label{remarks sobre producto tensorial}
Let $\M$ be a right $(\mathcal{L},\C)$-bimodule category, $\N$ a $(\C,\D)$-bimodule category,
and $\mathcal{P}$ a left $(\D,\mathcal{Q})$-module category. Then, by
\cite[Proposition 3.15]{Justin}, there is a canonical equivalence
$(\M\boxtimes_\C \N)\boxtimes_\mathcal{D} \mathcal{P} \cong
\M\boxtimes_\C (\N \boxtimes_\D \mathcal{P})$ of 
$(\mathcal{L},\mathcal{Q})$-bimodule categories. Hence we can use the notation $\mathcal{M}\boxtimes_\C \N
\boxtimes_\D \mathcal{P}$ without any ambiguity.
\end{remark}
A $(\C, \D)$-bimodule category, $\M$, is \emph{invertible} if there
exist bimodule equivalences
\begin{equation*}
\M^{\operatorname{op}}\boxtimes_\C \M \cong \D  \quad \operatorname{\and} \quad
\M\boxtimes_\D \M^{\operatorname{op}} \cong \C.
\end{equation*}

The \emph{Brauer-Picard group}  $\BrPic(\C)$ of a fusion category $\C$ is
the set of equivalence classes of invertible $\C$-bimodule categories with product given by $\boxtimes_\C$,  \cite{ENO3}.

\subsection{The Drinfeld center of a fusion category}

The \textit{ Drinfeld center}
$\mathcal{Z}(\C)$ of a monoidal category $\C$, is a braided monoidal category defined as follows. The objects of $\mathcal{Z}(\C)$ are pairs $(X, c_{X,-})$,
where $X \in \C$ and $c_{X,Y} : X \otimes Y \to Y \otimes X$ are
isomorphisms natural in $Y$ satisfying $c_{X, Y\otimes Z} = ( \id_Y\otimes c_{X,Z} )(c_{X, Y}\otimes \id_Z)$ and $c_{X,\bf{1}} = \id_X$, for
all $Y,Z \in \C$. A morphism $f:(X,c_{X,-})\to (Y,c_{Y,-})$ is a
morphism $f:X\to Y$ in $\C$ such that $( \id_W\otimes f)c_{X,W}=
c_{Y,W} (f\otimes \id_W)$ for all $W \in \C$.

The  Drinfeld center is a braided monoidal category with structure given as follows:
\begin{itemize}
  \item the tensor product is $ (X, c_{X,-}) \otimes (Y, c_{Y,-}) =
(X\otimes Y, c_{X\otimes Y,-})$, where
$$c_{X\otimes Y,
Z} = (c_{X,Z} \otimes \id_Y)(\operatorname{id}_X \otimes c_{Y,Z}) : X \otimes Y
\otimes Z \to Z \otimes X \otimes Y,$$ for all $Z \in \C$,
  \item the identity element is $(\mathbf{1}, c_{\mathbf{1},-})$, $c_{\mathbf{1},Z} = \id_Z$
  \item the braiding is given by the morphism $c_{X,Y}$.
\end{itemize}

If $\C$ is a fusion category, then $\mathcal{Z}(\C)$ is a non-degenerate braided fusion category, see \cite[Corollary 3.9]{DGNO}.

\section{Morita equivalence of fusion categories
revised}\label{Morita revised}

Let $\So$ be an $(\C, \D)$-bimodule category and  $X$ be an object in $\C$. Left multiplication by $X$ gives rise to a right $\D$-module endofunctor of $\So$ that we will denote $L(X)$. Thus we
have a tensor functor
$$L : \C \to \END_\D(\So.), \quad X\mapsto L(X).$$ Conversely, each
tensor functor, $F : \C \to \END_\D(\So.)$, defines a unique
$(\C,\D)$-bimodule category structure on $\So$. 

\begin{remark}
It was proved in \cite[Proposition 4.2]{ENO3} that $\So$ is an invertible $(\C,
\D)$-bimodule category if and only if the fuctor $L$ is a tensor
equivalence.
\end{remark}

Let $\M$ be a  left $\C$-module category. We will consider $\M$ as a
left $\END_\C(\M)$-module category with action given by $$F\odot M=
F(M),$$ for $F\in \END_\C(\M), M\in \M$.

We denote by $\mathfrak{M}_r(\C)$ the $2$-category of right
$\C$-module categories and by $\mathfrak{M}_l(\END_\C(\M))$ the
2-category of left $\END_\C(\M)$-module categories. We define the $2$-functor
\begin{align*}
    \mathfrak{R}:\mathfrak{M}_l(\END_\C(\M))&\to \mathfrak{M}_r(\C)\\
    \So &\mapsto  \Fun_{\END_\C(\M)}(\M,\So).
\end{align*}
Notice that $\Fun_{\END_\C(\M)}(\M,\So)$ is indeed a right
$\C$-module category because the left actions of $\C$ and
$\END_\C(\M)$ commute, and the right action is given by
$$(F\odot X)(M)=F(X\otimes M),$$ for all $F\in
\Fun_{\END_\C(\M)}(\M,\So), X\in \C$ and $M\in \M$.

We also define the $2$-functor $\mathfrak{L}:\mathfrak{M}_r(\C)\to
\mathfrak{M}_l(\END_\C(\M))$ by
$$\mathfrak{L}(\N)=\N\boxtimes_\C\M,$$ for all $\N \in \mathfrak{M}_r(\C)$. Since the left
actions of $\C$ and $\END_\C(\M)$ on $\M$ commute in a coherent way,
$\mathfrak{L}(\N)$ is a left $\END_\C(\M)$-module category.

\begin{theorem}\label{epsilon equivalencia} Let $\M$ be a left $\C$-module category.
There is an equivalence of left $\END_\C(\M)$-module categories given by
\begin{align*}
\varepsilon:\Fun_{\END_\C(\M)}(\M,\So)\boxtimes_\C\M &\to \So\\
F\boxtimes_\C M &\mapsto F(M),
\end{align*}
for all $\So\in \mathfrak{M}_l(\END_\C(\M))$, $M\in \M$ and $F\in
\Fun_{\END_\C(\M)}(\M,\So)$.
\end{theorem}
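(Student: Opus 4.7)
The plan is to identify $\varepsilon$ as the counit of a $2$-adjunction $\mathfrak{L} \dashv \mathfrak{R}$ between $\mathfrak{M}_r(\C)$ and $\mathfrak{M}_l(\END_\C(\M))$, and then to upgrade this adjunction to a $2$-equivalence using the Morita invertibility of $\M$ as a $(\END_\C(\M),\C)$-bimodule category.

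First, I would verify that $\varepsilon$ is well-defined and is a morphism of left $\END_\C(\M)$-module categories. Well-definedness on the relative Deligne tensor product uses the balancing relation $(F \odot X) \boxtimes_\C M \cong F \boxtimes_\C (X \otimes M)$ together with the defining identity $(F \odot X)(M) = F(X \otimes M)$. The left $\END_\C(\M)$-action on $\mathfrak{R}(\So)\boxtimes_\C \M$ is the one inherited from the $\END_\C(\M)$-action on $\M$, namely $G \cdot (F \boxtimes_\C M) := F \boxtimes_\C G(M)$, which descends to the relative tensor product thanks to the $\C$-linearity of $G$. Equivariance of $\varepsilon$ then follows directly from the fact that $F$ is $\END_\C(\M)$-equivariant:
$$\varepsilon\bigl(G \cdot (F \boxtimes_\C M)\bigr) = F(G(M)) = G(F(M)) = G \cdot \varepsilon(F \boxtimes_\C M).$$

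Next, I would construct a candidate unit $\eta_\N:\N \to \Fun_{\END_\C(\M)}(\M,\N\boxtimes_\C \M)$ by $N \mapsto (M \mapsto N\boxtimes_\C M)$, verifying that for fixed $N$ the resulting functor is left $\END_\C(\M)$-linear (because the action on $\N\boxtimes_\C \M$ is through the second factor), and that $\eta_\N$ is a right $\C$-module functor natural in $\N$. The universal property of $\boxtimes_\C$ (see Remark \ref{remarks sobre producto tensorial}) then produces the adjunction $\mathfrak{L} \dashv \mathfrak{R}$ with $(\eta,\varepsilon)$ as unit--counit pair; the triangle identities reduce to the tautologies $\varepsilon(\eta_\N(N) \boxtimes_\C M) = N\boxtimes_\C M$ and $\eta_{\mathfrak{R}(\So)}(F)(M) = F\boxtimes_\C M \mapsto F(M)$.

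The main obstacle is to promote this adjunction to an equivalence, i.e.\ to show that $\varepsilon$ and $\eta$ are natural equivalences. For this I would invoke the Morita-theoretic fact (cf.\ \cite[Proposition 4.2]{ENO3}) that $\M$ is an invertible $(\END_\C(\M),\C)$-bimodule category, so that $\mathfrak{L} = -\boxtimes_\C \M$ is itself a $2$-equivalence with quasi-inverse $-\boxtimes_{\END_\C(\M)} \M^{\op}$. By uniqueness of quasi-inverses in a $2$-adjunction, $\mathfrak{R}$ must agree with this quasi-inverse up to natural equivalence, forcing both $\eta$ and $\varepsilon$ to be equivalences. Alternatively, one reduces to indecomposable $\So$, uses that every such $\So$ is of the form $\N\boxtimes_\C \M$ for some indecomposable right $\C$-module $\N$ (again by Morita duality), and then verifies directly that $\varepsilon_{\N\boxtimes_\C \M}$ admits $\eta_\N \boxtimes_\C \id_\M$ as a quasi-inverse via the triangle identities established above.
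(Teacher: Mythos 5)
Your proposal is correct and its skeleton coincides with the paper's: realize $\varepsilon$, together with the unit $\eta_\N(N)=(M\mapsto N\boxtimes_\C M)$, as the counit of the $2$-adjunction $\mathfrak{L}\dashv\mathfrak{R}$, and then deduce that $\eta$ and $\varepsilon$ are equivalences because this adjunction is actually an adjoint $2$-equivalence. The only genuine divergence is the input used for that last step: the paper quotes Etingof--Ostrik \cite{EO}, who prove that $\mathfrak{R}=\Fun_{\END_\C(\M)}(\M,-)$ is a $2$-equivalence, whence its left adjoint $\mathfrak{L}$ is also one and the unit and counit are equivalences; you instead invoke \cite[Proposition 4.2]{ENO3} to see that $\M$ is an invertible $(\END_\C(\M),\C)$-bimodule category, so that $\mathfrak{L}=-\boxtimes_\C\M$ is a $2$-equivalence with quasi-inverse $-\boxtimes_{\END_\C(\M)}\M^{\op}$ (via the associativity of $\boxtimes$ of Remark \ref{remarks sobre producto tensorial}), and then use that an adjoint of an equivalence is a quasi-inverse. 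Both routes are legitimate and rest on the same underlying Morita theory --- the invertibility criterion in \cite{ENO3} is itself established with the Etingof--Ostrik machinery --- so your argument is not more elementary; its one small cost is that the invertibility step, as cited, really requires $\M$ indecomposable (so that $\END_\C(\M)$ is fusion and the criterion of \cite{ENO3} applies as stated), whereas the paper's appeal to \cite{EO} covers an arbitrary semisimple module category, which is the generality in which the theorem is phrased. Your explicit checks that $\varepsilon$ is $\C$-balanced and $\END_\C(\M)$-equivariant (the paper leaves these implicit) are fine, with the understanding that the displayed identity $F(G(M))=G(F(M))$ is really the structure isomorphism of the module functor $F$ rather than an equality.
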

\begin{proof}
The $2$-functors $\mathfrak{L}$ and $\mathfrak{R}$ introduced above
are adjoint 2-functors. The unit of the adjunction is the natural
$2$-transformation $\eta:\id_{\mathfrak{M}_r(\C)}\to
\mathfrak{R}\circ\mathfrak{L},$ given by
\begin{align*}
\eta:\N &\to \Fun_{\END_\C(\M)}(\M,\N\boxtimes_\C\M)\\
N &\mapsto (M\mapsto N\boxtimes M)
\end{align*}
and the counit of the adjunction is the natural $2$-transformation
$\varepsilon:\mathfrak{L}\circ \mathfrak{R}\to
\id_{\mathfrak{M}_l(\END_\C(\M))},$ given by
\begin{align*}
\varepsilon:\Fun_{\END_\C(\M)}(\M,\So)\boxtimes_\C\M &\to \So\\
F\boxtimes_\C M &\mapsto F(M),
\end{align*}
for all $\N\in \mathfrak{M}_r(\C), \So\in
\mathfrak{M}_l(\END_\C(\M))$, $N\in \N, M\in \M, F\in
\Fun_{\END_\C(\M)}(\M,\So)$.

Etingof and Ostrik proved that the $2$-functor $\mathfrak{R}$ is an
equivalence of $2$-categories \cite{EO}. Therefore, its left adjoint
$\mathfrak{L}$ is also an equivalence of $2$-categories and the unit
$\eta$ and the counit $\varepsilon$ of the adjunction are equivalences of module
categories.
\end{proof}
An important fact used in the proof of Theorem \ref{epsilon
equivalencia} is that the $2$-functor $\mathfrak{R}$ is a
biequivalence of $2$-categories \cite{EO}. Some useful results follow
from it.
\begin{remark}\label{Remark equivalencia Morita}
\begin{enumerate}[leftmargin=*]
\item The biequivalence $\mathfrak{R}$ gives at the level of morphisms, a canonical tensor equivalence
\begin{align*}\label{equivalencia importante}
\END_{\END_\C(.\M)}(.\So)&\to \END_\C(\Fun_{\END_\C(.\M)}(.\M,.\So).)\\
F&\mapsto (G\mapsto F\circ G),
\end{align*}for each $\So\in \mathfrak{M}_l(\END_\C(\M))$.
\item For a fix a  right $\C$-module category $\N$, the 2-functor
\begin{align*}
    \M.\mapsto \Fun_\C(\N.,\M.)
\end{align*}
defines a $2$-equivalence between the $2$-category
$\mathfrak{M}_r(\C)$ and the $2$-category
$\mathfrak{M}_l(\END_\C(\N))$. In particular, we have a natural
tensor equivalence
\begin{equation}\label{equivalencia importante}
\END_{\END_\C(\N.)}(.\Fun_\C(\N.,\M.))\to \END_\C(\N.).
\end{equation}
\end{enumerate}
\end{remark}

\subsection{Duality between tensor
functors}\label{subsection-duality-functors} In this subsection we
collect some definitions and results from \cite{ENO} that will be
useful later.

Let $\C$ and $\D$ be fusion categories and $(F,F^0):\C\to \D$ be a
tensor functor. Any $\D$-module category
$(\M,\otimes,\alpha)$ also has a $\C$-module structure induced by
the tensor functor $F$ that we denote by $(\M^F,\Fot,\alpha^F)$. Here
$\M^F=\M$ as an abelian category, the left action is defined by $V\ \Fot M:=
F(V)\otimes M$ and the associativity constraint is given by
$$\alpha^F_{V,W,M}:=
\alpha_{F(V),F(W),M}\circ(F_{V,W}^0\otimes\id_M):(V\otimes W)\ \Fot
M\to V\ \Fot (W\ \Fot M),$$ for all $V,W \in \C,$ $M\in\M^F$. 

There is also an associated dual tensor functor $F^*: \END_\D(\M)\to
\END_\C(\M^F)$. The dual functor $F^*$ is defined in the following
way: given a $\D$-module endofunctor $(T,c):\M\to \M$, we set
$$F^*(T)=T  \text{  and  } F^*(c)_{V,M}=c_{F(V),M},$$ for all $V\in
\C$, $M\in \M^F$.
\begin{remark}\label{dualidad de F} Let $F_1, F_2: \C\to \D$ be
tensor functors and let $\theta: F_1\overset{\sim}{\Rightarrow} F_2
$ be a monoidal natural isomorphism.
\begin{enumerate}[leftmargin=*]
\item If $\M$ is a left $\D$-module category, then the natural
transformation $\theta$ induces an isomorphism of $\C$-module
categories $\widetilde{\theta}=(\operatorname{Id}_\M, \overline{\theta}):
\M^{F_1} \to \M^{F_2}$, where
$\overline{\theta}_{X,M}=\theta_X\otimes \id_M$, $\forall  X\in \C,$
$M\in \M$.
\item  As tensor functors $(F^*)^*=F$ and
 as $\D$-module categories $(\M^{F})^{F^*}\cong\M$.
\end{enumerate}
\end{remark}
Now, we introduce a tensor equivalence between dual categories that will be useful later.

Let $\alpha:\M\to\N$ be an equivalence of $\C$-module categories,
with quasi-inverse $\alpha^*:\N\to \M$ and natural isomorphism
$\Delta: \id_\N\to \alpha\circ\alpha^*$. 

The functor
\begin{align}
\ad_\alpha: \END_\C(\M) &\to \END_\C(\N) \label{functor ad} \\
F &\mapsto \alpha\circ F\circ\alpha^* \notag
\end{align}
defines a tensor equivalence with structural natural isomorphism
\begin{equation*}
c_{F,G}=\alpha\circ F(\Delta_{G(-)}):\ad_\alpha(F\circ G) \to
\ad_\alpha(F)\circ \ad_\alpha(G).
\end{equation*}

\section{Proof of Theorem \ref{description of Funct}}\label{Seccion demotracion main theorem}

The  proof  of Theorem \ref{description of Funct} is consequence of several lemmas. First we will prove that $\mathfrak{Cor}$ is a category. After that we define a contravariant functor $\mathcal{K}:\mathfrak{Funct}\to \mathfrak{Cor}$ that is the identity at the level of objects. Finally, we prove that $\mathcal{K}$ is faithful and full, and thus  by \cite[Theorem 1,
p.91]{MacLane-book} an equivalence.

The following lemma shows that $\mathfrak{Cor}$ is in fact a category.

\begin{lemma}\label{Lema1}The relation described in $(2)$ in the definition of the category $\mathfrak{Cor}$ is an equivalence relation. The composition of arrows in $\mathfrak{Cor}$
does not depend on the representative of the equivalence class chosen. Moreover, the composition is associative.
\end{lemma}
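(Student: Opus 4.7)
The plan is to verify the three assertions by direct diagram chases in the bicategory of bimodule categories over fusion categories, invoking Remark \ref{remarks sobre producto tensorial} whenever an associator for iterated Deligne products is required.

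First, to show that the relation is an equivalence relation: reflexivity is witnessed by $(\id_\So,\id_\alpha)$. For symmetry, given $(\phi,a)\colon(\So,\alpha)\to(\So',\alpha')$, I would pick a $(\C,\D)$-bimodule quasi-inverse $\phi^{-1}$ of $\phi$ with counit $\varepsilon\colon \phi\circ\phi^{-1}\cong\id$ and take as symmetric datum $(\phi^{-1},a')$, where $a'$ is the composite natural isomorphism
\[
\alpha'\;\cong\;\alpha'\circ(\phi\boxtimes_\D\N)\circ(\phi^{-1}\boxtimes_\D\N)\;\cong\;\alpha\circ(\phi^{-1}\boxtimes_\D\N)
\]
obtained by whiskering $\varepsilon^{-1}$ on the right of $\alpha'$ and then whiskering $a^{-1}$ on the right by $\phi^{-1}\boxtimes_\D\N$. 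Transitivity follows by composing the bimodule equivalences and horizontally pasting the two natural isomorphisms; commutativity of the resulting outer triangle is immediate from the naturality of horizontal composition.

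For the well-definedness of composition, suppose $(\So,\alpha)\sim(\So',\alpha')$ via $(\phi,a)$ and $(\mathcal{P},\beta)\sim(\mathcal{P}',\beta')$ via $(\psi,b)$. I would take $\phi\boxtimes_\D\psi\colon\So\boxtimes_\D\mathcal{P}\to\So'\boxtimes_\D\mathcal{P}'$ as the new $(\C,\mathcal{L})$-bimodule equivalence and produce a natural isomorphism between $\alpha\odot\beta$ and $(\alpha'\odot\beta')\circ((\phi\boxtimes_\D\psi)\boxtimes_\mathcal{L}\mathcal{T})$ by pasting $\id_\So\boxtimes_\D b$ and then $a$ into the two commutative squares defining $\odot$; the required compatibility is a functoriality check for $\boxtimes_\D$ and $\boxtimes_\mathcal{L}$.

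Finally, for associativity, given three composable arrows $(\So,\alpha)$, $(\mathcal{P},\beta)$, $(\mathcal{Q},\gamma)$, I would use the canonical associator $a_{\So,\mathcal{P},\mathcal{Q}}$ from Remark \ref{remarks sobre producto tensorial} as the witnessing bimodule equivalence from $(\So\boxtimes_\D\mathcal{P})\boxtimes_\mathcal{L}\mathcal{Q}$ to $\So\boxtimes_\D(\mathcal{P}\boxtimes_\mathcal{L}\mathcal{Q})$. The main obstacle is checking that the two large pasting diagrams defining $(\alpha\odot\beta)\odot\gamma$ and $\alpha\odot(\beta\odot\gamma)$ agree after precomposition with this associator; this amounts to a pentagon-type coherence which reduces, via the canonicity asserted in Remark \ref{remarks sobre producto tensorial}, to the coherence already available for iterated Deligne products, so the remainder of the verification is diagrammatic but unavoidable.
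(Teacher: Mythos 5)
Your proposal is correct and follows essentially the same direct-verification strategy as the paper: reflexivity/symmetry/transitivity by composing bimodule equivalences and pasting the natural isomorphisms, well-definedness by functoriality of $\boxtimes_\D$, and associativity via the canonical equivalence of Remark \ref{remarks sobre producto tensorial}. The only cosmetic differences are that the paper checks well-definedness one factor at a time (your simultaneous $\phi\boxtimes_\D\psi$ is equivalent) and strictifies the associativity $1$-isomorphism to the identity using that same remark, whereas you keep the associator explicit and appeal to its coherence.
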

\begin{proof}
A straightforward calculation shows that the relation described
above between the arrows is reflexive and symmetric.

The relation is also transitive and, therefore, it is an equivalence
relation. In fact, let $(\So, \alpha)$, $(\So', \alpha')$ and $(\So^{''}, \alpha^{''})$
are arrows from $(\C, \M)$ to $(\D, \N)$ in the category
$\mathfrak{Cor}$.  Assume that $(\So, \alpha)$ is related to $(\So^{'},
\alpha')$ via $(\phi, a)$ and $(\So^{'}, \alpha')$ is related to
$(\So^{''}, \alpha^{''})$ via $(\phi^{'}, a')$. Then $(\So, \alpha)$
is related to $(\So^{''}, \alpha^{''})$ via the equivalence
$\phi^{'}\circ\phi: \So\rightarrow \So^{''}$ and the natural
isomorphism $a'\circ  a$ from $\alpha$ to $\alpha^{''}\circ
(\phi^{'}\circ\phi\boxtimes_{\D}\id_{\N})$.

To see that the composition is well defined, let $(\So,
\alpha)$ and $(\So^{'}, \alpha')$ be arrows from $(\C, \M)$ to $(\D,
\N)$ in $\mathfrak{Cor}$ related by the pair $(\phi, a)$. Let
$(\mathcal{P}, \beta)$ and $(\mathcal{P}^{'}, \beta')$ be arrows
from $(\D, \N)$ to $(\mathcal{L}, \mathcal{T})$ in $\mathfrak{Cor}$
related by the pair $(\varphi, b)$. Recall that by Remark \ref{remarks sobre producto tensorial} the notation
$\mathcal{M}\boxtimes_\C \N \boxtimes_\D \mathcal{P}$ will yield no ambiguity. Then we may
assume that the associativity $1$-isomorphism is the identity. We want to see that $(\So,\alpha)\odot (\mathcal{P},\beta)=
(\So\boxtimes_{\D} \mathcal{P}, \alpha\odot \beta)$ and
$(\So^{'},\alpha^{'})\odot (\mathcal{P},\beta)=
(\So^{'}\boxtimes_{\D} \mathcal{P}, \alpha^{'}\odot \beta)$ are
related arrows from $(\C,\M)$ to $(\mathcal{L},\mathcal{T})$ in
$\mathfrak{Cor}$. Clearly, $\phi\boxtimes_{\D} \id_{\mathcal{P}}:
\So\boxtimes_\D \mathcal{P}\to \So'\boxtimes_\D \mathcal{P}$ is an
equivalence of $(\C, \mathcal{Q})$-bimodules. In addition, it holds
that
\begin{align*}(\alpha^{'}\odot t) \circ (\phi \boxtimes_{\D}
\id_{\mathcal{P}}\boxtimes_{\mathcal{L}} \id_{\mathcal{T}}) & =
\alpha^{'}\circ(\id_{\So^{'}}\boxtimes_{\D} t) \circ (\phi
\boxtimes_{\D} \id_{\mathcal{P}\boxtimes_{\mathcal{L}}
\mathcal{T}})\\
& = \alpha^{'} \circ (\phi \boxtimes_{\D}\id_{\N}) \circ
(\id_{\So}\boxtimes_{\D} t)\\ & \cong \alpha \circ
(\id_{\So}\boxtimes_{\D} t)\\ & = (\alpha\odot t).
\end{align*}
Similarly, $(\So,\alpha)\odot
(\mathcal{P},t)= (\So\boxtimes_{\D} \mathcal{P}, \alpha\odot t)$ and
$(\So,\alpha)\odot (\mathcal{P}^{'},t^{'})= (\So\boxtimes_{\D}
\mathcal{P}^{'}, \alpha\odot t^{'})$ are related arrows from
$(\C,\M)$ to $(\mathcal{L},\mathcal{T})$ in $\mathfrak{Cor}$. Thus
the composition of arrows is a well defined operation. 

Next, we will prove that the composition is associative. Let
$(\So,\alpha): (\C,\M)\to (\D,\N)$, $(\mathcal{P},\beta): (\D,\N)\to
(\mathcal{L},\mathcal{T})$ and $(\mathcal{R},\gamma):
(\mathcal{L},\mathcal{T})\to (\mathcal{A},\mathcal{Q})$ be arrows in
$\mathfrak{Cor}$. We have that:
\begin{align*}((\So,\alpha)\odot (\mathcal{P},\beta)) \odot (\mathcal{R},\gamma) & =
(\So\boxtimes_{\D}\mathcal{P}\boxtimes_{\mathcal{L}}\mathcal{R},
(\alpha\odot \beta)\circ (\id_{\So\boxtimes_{\D}\mathcal{P}}\boxtimes_{\mathcal{L}} \gamma))\\
& = (\So\boxtimes_{\D}\mathcal{P}\boxtimes_{\mathcal{L}}\mathcal{R},
\alpha\circ (\id_{\So}\boxtimes_{\D}\beta)\circ (\id_{\So}\boxtimes_{\D}\id_{\mathcal{P}}\boxtimes_{\mathcal{L}} \gamma))\\
& = (\So\boxtimes_{\D}\mathcal{P}\boxtimes_{\mathcal{L}}\mathcal{R},
\alpha\odot (\beta\circ(\id_{\mathcal{P}}\boxtimes_{\mathcal{L}} \gamma)))\\
& = (\So,\alpha)\odot ((\mathcal{P},\beta) \odot
(\mathcal{R},\gamma)),
\end{align*}
as we desired.
\end{proof}

We may assume, without loss of generality, that every fusion category
and every module category are simultaneously strict \cite[Proposition
2.2]{Ga1}. \smallbreak

Let $F\in \mathfrak{Funct}((\D,\N),(\C,\M))$, that is,
$F:\END_{\D}(.\N)\to \END_{\C}(.\M)$ is a tensor functor. The
category
$$\So_F:=\Fun_{\END_\D(.\N)}(.\N,.\M^F),$$ is a $(\C,\D)$-bimodule category with left
$\C$-action given by
\begin{align*}
\odot:\C\times \So_F &\to \So_F\\
(X,G) &\mapsto (X\odot G)(N) = X\otimes G(N),
\end{align*}
and right $\D$-action given by
\begin{align*}
\odot: \So_F\times \D &\to \So_F\\
(G,Y) &\mapsto (G\odot Y)(N) = G(Y\otimes N).
\end{align*}
The associativity constraints of the actions are induced by the
associativity constraint of the fusion categories acting on $\So_F$.
Since we have assumed that the fusion categories
are strict, it follows that $\So_F$ is a right and left strict module
category over $\D$ and $\C$, respectively.

Next we check that the right and left operations defined above
are actually a right $\D$-action and a left $\C$-action on $\So_F$.
Notice that if $X\in \C$ and $G\in\So_F$ then $X\odot G$ is also in
$\So_F = \Fun_{\END_\D(.\N)}(.\N,.\M^F)$. In fact, for $N\in\N$ and
$\varphi\in\END_\D(.\N)$, it follows that
\begin{align*}\varphi  \Fot ((X\odot G)(N))& = F(\varphi) \otimes (X \otimes G(N))\\
& = F(\varphi)((X \otimes G(N)))\\
& \cong X \otimes F(\varphi)(G(N))\\ & = (X\otimes (\varphi\Fot G(
N))\\ & = (X\odot G)(\varphi \otimes N).
\end{align*}

In addition, if $Y\in \C$ then 
\begin{align*}
    (X\odot(Y\odot G))(N) &= X\otimes
(Y\odot G)(N) \\ &= X\otimes (Y\otimes G(N)) \\ &\cong (X\otimes Y)\otimes
G(N) = ((X\otimes Y)\odot G)(N).
\end{align*}
It is clear that the unit of $\C$ is well behaved with respect to $\odot$.  It similarly follows that  the remaining operation gives  a right $\D$-action on $\So_F$. Moreover, a straightforward calculation shows that the  category $\So_F$ is a $(\C,\D)$-bimodule category.

By Theorem \ref{epsilon equivalencia}, the evaluation functor
$$\varepsilon:\So_F\boxtimes_\D\N \to \M$$ is an equivalence of
left $\C$-module categories.  We  define $\mathcal{K}((\C, \M)) := (\C, \M)\in \mathfrak{Cor},$ for
an object $(\C, \M)\in\mathfrak{Funct}$ and
$$\mathcal{K}(F):= (\So_F, \varepsilon)\in
\mathfrak{Cor}((\C,\M),(\D,\N)),$$ for  $F\in \mathfrak{Funct}((\D,\N),(\C,\M))$.

The pair $(\So_F, \varepsilon)$ does not depend on the equivalence
class of $F$, that is, if $G: \END_{\D}(.\N)\to \END_{\C}(.\M)$ is a
monoidal functor equivalent to $F$ then the arrows $(\So_F,
\varepsilon)$ and $(\So_G, \varepsilon)$ are related in
$\mathfrak{Cor}$. In fact, it follows from Remark \ref{dualidad de F} (1) that
the equivalence $F\cong G$ induces an
equivalence $\M^F \cong \M^G$.
Therefore, $\So_F$ and $\So_G$ are equivalent as $(\C,
\D)$-bimodules categories with the equivalence given by the identity
functor equipped with some natural isomorphism and  $\varepsilon =
\varepsilon \circ (\id_{\So_F}\boxtimes_{\D} \id_{\N})$.

Thus, we have defined an assignment $\mathcal{K}$ that sends
an arrow $F\in \mathfrak{Funct}((\D,\N), (\C,\M))$ to an arrow
$(\So_F, \varepsilon)\in \mathfrak{Cor}((\C,\M),(\D,\N))$.

\begin{lemma}  $\mathcal{K}$ is a contra\-va\-riant functor.
\end{lemma}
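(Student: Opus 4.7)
The plan is to verify the two contravariance axioms for $\mathcal{K}$: preservation of identities, and reversal of composition, that is, $\mathcal{K}(\id_{(\C,\M)}) = \id_{(\C,\M)}$ in $\mathfrak{Cor}$ and $\mathcal{K}(F\circ G) = \mathcal{K}(F)\odot \mathcal{K}(G)$ for composable arrows $G:(\mathcal{L},\mathcal{T})\to(\D,\N)$ and $F:(\D,\N)\to(\C,\M)$ in $\mathfrak{Funct}$. I continue to work in the strict setting, so associators of $\boxtimes$ can be suppressed. For identities, I would note that $F = \id_{\END_\C(.\M)}$ gives $.\M^F = .\M$, hence $\So_{\id} = \Fun_{\END_\C(.\M)}(.\M,.\M) = \END_{\END_\C(.\M)}(.\M)$. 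By Remark \ref{Remark equivalencia Morita} applied with $\N=\M$, this admits a canonical tensor equivalence to $\C$ that transports its $(\C,\C)$-bimodule structure to the regular one on $\C$ and sends $\varepsilon$ to the canonical isomorphism $\C\boxtimes_\C \M \simeq \M$. So $\mathcal{K}(\id)$ coincides with the identity arrow of $\mathfrak{Cor}$ at $(\C,\M)$.

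For composition, the plan is to produce an explicit $(\C,\mathcal{L})$-bimodule equivalence
\[
\Phi:\So_F \boxtimes_\D \So_G \longrightarrow \So_{F\circ G},\qquad H\boxtimes_\D K \;\longmapsto\; H\circ K,
\]
and show that it intertwines $\varepsilon_F\odot \varepsilon_G$ with $\varepsilon_{F\circ G}$. Well-definedness of $\Phi$ requires two checks: composition is $\D$-balanced, because the right $\D$-action on $\So_F$ and the left $\D$-action on $\So_G$ are both given by inserting an object of $\D$ into the argument of a functor, so $(H\odot Y)\circ K = H\circ (Y\odot K)$; and $H\circ K$ lies in $\So_{F\circ G}$ because for $\varphi\in \END_\mathcal{L}(.\mathcal{T})$ one computes $H(K(\varphi(T))) = H(G(\varphi)(K(T))) = (F\circ G)(\varphi)(H(K(T)))$. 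Left $\C$-linearity and right $\mathcal{L}$-linearity of $\Phi$ are immediate from the definitions of the bimodule actions.

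To see that $\Phi$ is an equivalence, I would apply $-\boxtimes_\mathcal{L}\mathcal{T}$ to both sides and invoke Theorem \ref{epsilon equivalencia} twice: the composite
\[
(\So_F \boxtimes_\D \So_G)\boxtimes_\mathcal{L}\mathcal{T} \simeq \So_F\boxtimes_\D(\So_G\boxtimes_\mathcal{L}\mathcal{T}) \xrightarrow{\id\boxtimes_\D \varepsilon_G} \So_F\boxtimes_\D \N \xrightarrow{\varepsilon_F} \M
\]
is an equivalence sending $H\boxtimes_\D K\boxtimes_\mathcal{L} T$ to $H(K(T))$, and the same element routed through $\Phi\boxtimes_\mathcal{L}\id_\mathcal{T}$ followed by $\varepsilon_{F\circ G}$ produces exactly $H(K(T))$. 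Hence $\Phi\boxtimes_\mathcal{L}\id_\mathcal{T}$ is an equivalence, and since the $2$-functor $\mathfrak{L} = -\boxtimes_\mathcal{L}\mathcal{T}$ is a $2$-equivalence, $\Phi$ itself is an equivalence of right $\mathcal{L}$-modules; combined with its left $\C$-linearity this upgrades to an equivalence of $(\C,\mathcal{L})$-bimodules. The same chase verifies the identity $\varepsilon_F\odot \varepsilon_G = \varepsilon_{F\circ G}\circ (\Phi\boxtimes_\mathcal{L}\id_\mathcal{T})$, so $(\Phi,\id)$ exhibits $\mathcal{K}(F)\odot \mathcal{K}(G)$ and $\mathcal{K}(F\circ G)$ as equivalent arrows in $\mathfrak{Cor}$.

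The hard part will be the coherence bookkeeping: threading the associator of $\boxtimes$ (Remark \ref{remarks sobre producto tensorial}), the counit $\varepsilon$ of the Etingof--Ostrik 2-adjunction, and the identifications $(\M^F)^G = \M^{F\circ G}$ so that the diagram defining $\odot$ in $\mathfrak{Cor}$ commutes strictly enough to identify the two arrows; the strict conventions in force are chosen precisely so that this bookkeeping closes up without introducing additional coherence data.
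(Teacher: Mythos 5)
Your proposal is correct and follows essentially the same route as the paper: the composition of module functors induces the bimodule functor $\So_F\boxtimes_\D \So_G\to \So_{F\circ G}$, and one shows it is an equivalence by applying $-\boxtimes_{\mathcal{L}}\mathcal{T}$, comparing with the evaluation equivalences of Theorem \ref{epsilon equivalencia}, and invoking that the $2$-functor $\mathfrak{L}$ is a $2$-equivalence, exactly as in the paper's diagram argument. Your additional explicit verification that $\mathcal{K}$ sends the identity functor to the identity arrow $(\C,\mathrm{can})$ of $\mathfrak{Cor}$ (via $\So_{\id}=\END_{\END_\C(.\M)}(.\M)\cong \C$) is a point the paper leaves implicit, and is a welcome completion rather than a deviation.
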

\begin{proof}
Let $F:\END_\D(\N)\to \END_\C(\M)$ and let $G:\END_\C(\M)\to
\END_{\mathcal{Q}}(\mathcal{L})$ be tensor functors and let $G\circ
F:\END_\D(\N)\to \END_{\mathcal{Q}}(\mathcal{L})$ be their
composition. Consider the corresponding objects in $\mathfrak{Cor}$
associated to these three functors by $\mathcal{K}$, $(\So_F =
\Fun_{\END_\D(.\N)}(.\N,.\M^F),\varepsilon)$, $(\So_G =
\Fun_{\END_\C(.\M)}(.\M,.\mathcal{L}^G),\varepsilon)$ and
$(\So_{G\circ F}$ $ = \Fun_{\END_{\D}(.\N)}(.\N,.\mathcal{L}^{G\circ
F}),\varepsilon)$.
The composition induces a $(\mathcal{Q},\D)$-bimodule functor
$$\phi: \So_G\boxtimes_{\C}\So_F\to \So_{G\circ F}.$$

The commutativity of the diagram
$$
\begin{diagram}
\node{\So_G\boxtimes_\C
\So_F\boxtimes_\D\N}\arrow{e,t}{\id_{\So_G}\boxtimes_\C\varepsilon}
\arrow{s,l}{\phi\boxtimes_\D\N}\node{\So_G\boxtimes_\C\M}\arrow{s,r}{\varepsilon}
 \\
\node{\So_{G\circ F}\boxtimes_\D\N}
\arrow{e,t}{\varepsilon}\node{\mathcal{L}}
\end{diagram}
$$
implies that $\phi\boxtimes_\D\N$ is an equivalence, since the
evaluation maps $\varepsilon$ are equivalences, by Theorem
\ref{epsilon equivalencia}.

It follows from the proof of Theorem \ref{epsilon equivalencia} that
the functor $\mathfrak{L} = (-)\boxtimes_{\D}\N$ defines an
equivalence of 2-categories. Hence, $\phi$ is an equivalence of
$(\mathcal{Q},\D)$-bimodule categories. Thus, $\mathcal{K}(G\circ
F)$ and $\mathcal{K}(G)\odot \mathcal{K}(F)$ are related arrows in
$\mathfrak{Cor}$ by the pair ($\phi: \So_G\boxtimes_\C\So_F\to
\So_{G\circ F}, \varepsilon)$. Therefore $\mathcal{K}$ is a
contravariant functor, as we asserted.
\end{proof}

\begin{lemma}\label{Lemma K is faithful and full}
The functor $\mathcal{K}$ is faithful and full.
\end{lemma}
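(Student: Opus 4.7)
The plan is to establish fullness and faithfulness separately, both times exploiting the 2-equivalence $\mathfrak{L} \dashv \mathfrak{R}$ between $\mathfrak{M}_r(\D)$ and $\mathfrak{M}_l(\END_\D(.\N))$ recorded in Theorem~\ref{epsilon equivalencia} and Remark~\ref{Remark equivalencia Morita}. For fullness, given $(\So,\alpha) \in \mathfrak{Cor}((\C,\M),(\D,\N))$, I will construct a tensor functor $F:\END_\D(.\N) \to \END_\C(.\M)$ representing it. For each $T \in \END_\D(.\N)$ the endofunctor $\id_\So \boxtimes_\D T$ of $\So \boxtimes_\D \N$ commutes with the left $\C$-action, so conjugation by $\alpha$ produces a $\C$-module endofunctor $F(T) := \alpha \circ (\id_\So \boxtimes_\D T) \circ \alpha^{-1}$ of $\M$; composition equips $F$ with a monoidal structure. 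By this very definition $\alpha$ is automatically an equivalence $\So \boxtimes_\D \N \to \M^F$ of left $\END_\D(.\N)$-module categories, compatibility with the $\C$-action being preserved. Applying $\mathfrak{R} = \Fun_{\END_\D(.\N)}(.\N,-)$ to $\alpha$ and composing with the unit $\eta_\So: \So \to \mathfrak{R}(\So \boxtimes_\D \N)$ — an equivalence since $\mathfrak{L},\mathfrak{R}$ are inverse 2-equivalences — yields a $(\C,\D)$-bimodule equivalence $\phi: \So \to \So_F$. The triangle identity $\varepsilon_{\So \boxtimes_\D \N} \circ (\eta_\So \boxtimes_\D \id_\N) \cong \id$ together with naturality of the counit $\varepsilon$ applied to $\alpha$ forces $\varepsilon_F \circ (\phi \boxtimes_\D \id_\N) \cong \alpha$, so $\phi$ with this natural isomorphism exhibits $\mathcal{K}(F) = (\So_F, \varepsilon_F) \sim (\So, \alpha)$ in $\mathfrak{Cor}$.

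For faithfulness, assume $F, G \in \mathfrak{Funct}((\D,\N),(\C,\M))$ satisfy $\mathcal{K}(F) \sim \mathcal{K}(G)$, witnessed by a $(\C,\D)$-bimodule equivalence $\phi: \So_F \to \So_G$ and a $\C$-module natural isomorphism $a: \varepsilon_F \Rightarrow \varepsilon_G \circ (\phi \boxtimes_\D \id_\N)$. By Theorem~\ref{epsilon equivalencia} each $\varepsilon_H: \So_H \boxtimes_\D \N \to \M^H$ (for $H = F,G$) is an equivalence of left $\END_\D(.\N)$-module categories, while the right $\D$-linearity of $\phi$ makes $\phi \boxtimes_\D \id_\N$ into an $\END_\D(.\N)$-module functor since the action sits entirely on the $\N$-factor. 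Hence $\psi := \varepsilon_G \circ (\phi \boxtimes_\D \id_\N) \circ \varepsilon_F^{-1}: \M^F \to \M^G$ is an $\END_\D(.\N)$-module equivalence whose underlying $\C$-module functor is identified with $\id_\M$ via $a$. Transporting the $\END_\D(.\N)$-linearity structure of $\psi$ across this identification yields, for each $T$, a natural isomorphism $\theta_T: F(T) \Rightarrow G(T)$; the family $\theta = (\theta_T)_T$ is the desired monoidal natural isomorphism $F \Rightarrow G$, so $F$ and $G$ represent the same arrow in $\mathfrak{Funct}$.

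The principal technical point is the monoidality of $\theta$ in the faithfulness argument: the axiom governing $\theta_{T \circ T'}$ must be extracted from the coherence of the $\END_\D(.\N)$-module structure on $\psi$ combined with the naturality of $a$ and the monoidal constraints of the two evaluation maps. Once the relevant diagrams are aligned, both halves of the argument reduce to bookkeeping of natural isomorphisms supported entirely by the 2-categorical equivalence $\mathfrak{L} \dashv \mathfrak{R}$.
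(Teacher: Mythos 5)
Your proposal is correct and takes essentially the same route as the paper: the paper also inverts $\mathcal{K}$ by sending $(\So,\alpha)$ to the tensor functor $T\mapsto \ad_\alpha(\operatorname{Id}_\So\boxtimes_\D T)$ and rests on the Etingof--Ostrik $2$-equivalence $\mathfrak{L}\dashv\mathfrak{R}$ underlying Theorem \ref{epsilon equivalencia}. You merely make explicit (via the triangle identities and the transport of the $\END_\D(.\N)$-module structure along $(\phi,a)$) the verifications that the paper compresses into the assertion that the two assignations are ``mutually inverse by construction,'' together with its check that $\mathcal{K}^{-1}$ is well defined on equivalence classes.
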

\begin{proof}
As in Section \ref{Morita revised},  
given $(\So,\alpha)\in \mathfrak{Cor}((\C,\M),(\D,\N))$, there is a
tensor functor associated 
$$L:\C\to \END_\D(\So.).$$
By the equivalence \eqref{equivalencia importante}, we can regard $L$
as a tensor functor $$L:\C\to  \END_{\END_\D(.\N)}(.\So\boxtimes_\D \N).$$
Dualizing $L$, we obtain $$L^*:\END_\D(.\N)\to
\END_\C(.(\So\boxtimes_\D \N)^L).$$ Now, using the functor
$\ad_\alpha$ introduced in  \eqref{functor ad}, we define the tensor functor
$$\mathcal{K}^{-1}((S,\alpha)):=\ad_\alpha \circ L^*:\END_\D(.\N)\to \END_\C(.\M).$$
An explicit description of the previous correspondence is the following. Given $(S,\alpha)\in \mathfrak{Cor}((\C,\M),(\D,\N))$, 
\begin{align*}
\mathcal{K}^{-1}((S,\alpha)):\END_\D(.\N)&\to \END_\C(.\M)\\
F &\mapsto \ad_\alpha(\operatorname{Id}_\So\boxtimes_\D F).
\end{align*}
Note that by construction these two assignments are mutually
inverse. Hence, it suffices to show  that $\mathcal{K}^{-1}$ is well defined. 

If  we have
two equivalent arrows $(\So,\alpha)$ and  $(\So',\alpha')$ related by
the pair $(\phi,a)$ in $\mathfrak{Cor}$, then the diagram of tensor functors
$$
\begin{diagram}
\node{} \node{\END_\C(.\So\boxtimes_\D
\N)}\arrow[2]{s,l}{\ad_{\phi\boxtimes_{\D}\N}}
\arrow{se,t}{\ad_\alpha} \\
\node{\END_\D(.\N)} \arrow{ne,t}{\operatorname{Id}_\So\boxtimes_\D(-)}
\arrow{se,b}{\operatorname{Id}_{\So'}\boxtimes_\D(-)}\node{} \node{\END_\C(.\M)}\\
\node{}\node{\END_\C(.\So'\boxtimes_\D \N)}
\arrow{ne,b}{\ad_{\alpha'}}
\end{diagram}
$$
commutes up to a natural tensor isomorphism. Since
$\ad_{\phi\boxtimes_{\D}\N}$ is a tensor equivalence, the tensor
functors associated to $(\So,\alpha)$ and $(\So',\alpha,)$ are
tensor isomorphic, and $\mathcal{K}^{-1}$ is well defined.
\end{proof}
Recall that a functor is an equivalence if and only it is faithful,
full and essentially surjective \cite[Theorem 1,
p.91]{MacLane-book}. Since $\mathcal{K}$ is the identity at the level of objects it is  essentially
surjective. Hence,  by Lemma \ref{Lemma K
is faithful and full} $\mathcal{K}$ is an equivalence of
categories. The  previous discussion gives a proof of Theorem
\ref{description of Funct}.

\section{Tensor equivalences between Morita duals of fusion categories}\label{Seccion equivalencis entre equivariantizaciones}

Let $\D$ be a fusion category and $\N$ be a left $\D$-module
category. Our first goal is to describe, only in terms of data
associated to $\D$ and $\N$, all possible pairs $(\C,\M)\in
\mathfrak{Funct}$ such that $\C_\M^*\cong \D_\N^*$ as fusion
categories. The next proposition gives a necessary and sufficient
condition in terms of the data of $\mathfrak{Cor}$ to say when a
functor between categorical duals is a tensor equivalence.
\begin{proposition}\label{equivalencias en Cor}
A tensor functor $F:\END_{\D}(\N)\to \END_{\C}(\M)$ is an
equivalence if and only if $\So_F = \Fun_{\END_\D(.\N)}(.\N,.\M^F)$ is
an invertible $(\C, \D)$-bimodule category.
\end{proposition}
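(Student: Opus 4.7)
The strategy is to reduce the claim to the known criterion (recalled from \cite[Proposition 4.2]{ENO3} in Section \ref{Morita revised}) that $\So_F$ is invertible as a $(\C,\D)$-bimodule category if and only if the left-multiplication tensor functor $L:\C\to \END_\D(\So_F.)$ is a tensor equivalence, and then to exhibit $F$ itself as built from $L$ by a sequence of operations that are each manifestly equivalence-preserving and equivalence-reflecting.

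Concretely, I would first recall from the proof of Lemma \ref{Lemma K is faithful and full} the explicit reconstruction
\[
F\;\cong\;\ad_\varepsilon\circ \widetilde{L}^{*}\;:\;\END_\D(.\N)\longrightarrow \END_\C(.\M),
\]
where $\varepsilon:\So_F\boxtimes_\D\N\to\M$ is the counit equivalence of Theorem \ref{epsilon equivalencia} and $\widetilde{L}^{*}$ is the dual of the tensor functor $\widetilde{L}:\C\to\END_{\END_\D(.\N)}(.\So_F\boxtimes_\D\N)$ encoding the left $\C$-action on $\So_F\boxtimes_\D\N$. Under the canonical tensor equivalence $\END_\D(\So_F.)\cong\END_{\END_\D(.\N)}(.\So_F\boxtimes_\D\N)$ furnished by Remark \ref{Remark equivalencia Morita}, the functor $\widetilde{L}$ is identified with $L$, so $\widetilde{L}$ is a tensor equivalence precisely when $\So_F$ is an invertible $(\C,\D)$-bimodule category.

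It then suffices to verify that both $\ad_\varepsilon$ and dualization preserve and reflect tensor equivalences. For $\ad_\varepsilon$ this is immediate from Definition \ref{functor ad}: a quasi-inverse is given by $\ad_{\varepsilon^{*}}$ for any quasi-inverse $\varepsilon^{*}$ of $\varepsilon$, so $\ad_\varepsilon$ is always a tensor equivalence. For the dualization $G\mapsto G^{*}$, Remark \ref{dualidad de F}(2) gives $(G^{*})^{*}\cong G$ and $(\M^{G})^{G^{*}}\cong\M$, and a tensor equivalence $G:\C\to\D$ induces a $2$-equivalence $\mathfrak{M}_l(\D)\to\mathfrak{M}_l(\C)$, $\M\mapsto\M^{G}$, whose action on endomorphism fusion categories is precisely $G^{*}$; hence $G$ is a tensor equivalence if and only if $G^{*}$ is. Chaining these three observations yields the desired biconditional: $F$ is a tensor equivalence iff $\widetilde{L}^{*}$ is, iff $\widetilde{L}$ is, iff $L$ is, iff $\So_F$ is invertible.

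The main obstacle, in my view, is purely one of bookkeeping: verifying the identification under Remark \ref{Remark equivalencia Morita} of the two incarnations of the left-multiplication functor, and ensuring that the displayed isomorphism $F\cong \ad_\varepsilon\circ \widetilde{L}^{*}$ is genuinely a chain of tensor isomorphisms rather than merely of underlying functors. No computation beyond the results already available in Sections \ref{preliminares} and \ref{Morita revised} is required.
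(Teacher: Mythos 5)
Your proposal is correct, and its ``if'' half is essentially the paper's own argument: from invertibility of $\So_F$ one gets that the left-multiplication functor $L$ is a tensor equivalence by \cite[Proposition 4.2]{ENO3}, and then $F\cong \mathcal{K}^{-1}((\So_F,\varepsilon))=\ad_\varepsilon\circ L^{*}$ is an equivalence, exactly as you argue (the paper, too, silently uses that $\ad_\varepsilon$ is an equivalence and that the dual of an equivalence is an equivalence). Where you genuinely diverge is the ``only if'' direction. The paper never argues that dualization \emph{reflects} equivalences; instead it uses the functoriality of $\mathcal{K}$: if $F$ is an equivalence with quasi-inverse $F^{*}$, then the images of $F$ and $F^{*}$ under $\mathcal{K}$ compose (under $\odot$, i.e.\ $\boxtimes$) to the identity arrow at $(\C,\M)$, so $\So_F$ has a $\boxtimes$-inverse and is invertible, again by \cite[Proposition 4.2]{ENO3}. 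Your alternative---deducing that $\widetilde{L}$ is an equivalence from $\widetilde{L}^{*}$ being one via $(G^{*})^{*}\cong G$ from Remark \ref{dualidad de F}(2)---is legitimate and has the merit of reducing both implications to a single mechanism, namely that each factor in $F\cong\ad_\varepsilon\circ\widetilde{L}^{*}$ preserves and reflects equivalences, after which \cite[Proposition 4.2]{ENO3} is invoked only once. The price is that the reflection step leans on the double-dual identity, whose precise meaning involves the canonical equivalences of a fusion category with its double dual with respect to the module categories appearing (so, implicitly, their nonvanishing/faithfulness, plus the identification of the two incarnations of $L$ via Remark \ref{Remark equivalencia Morita} and Theorem \ref{epsilon equivalencia}); in the situation at hand this is harmless, since $\M$ is indecomposable and $\varepsilon$ identifies $\So_F\boxtimes_\D\N$ with $\M$, and the paper itself states Remark \ref{dualidad de F}(2) without further hypotheses, so this is bookkeeping rather than a gap. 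In short: same skeleton and same key external input, but your forward implication replaces the paper's functoriality-of-$\mathcal{K}$ argument by an equivalence-reflection argument for the dual functor, which is a fair trade.
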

\begin{proof}
If $F$ is an equivalence and we denote by $F^{*}$ a
quasi-inverse, we have that $\mathcal{K}(F\circ F^{*}) =
\mathcal{K}(\id_{\END_{\C}(\M)})$. The functoriality of
$\mathcal{K}$ implies that $\So_F\boxtimes_\C \So_{F^{-1}}\cong \C$
as bimodule categories. Thus, by \cite[Proposition 4.2]{ENO3},
$\So_F$ is an invertible $(\C, \D)$-bimodule category. Conversely, if $\So_F$ is invertible the functor $L$, defined in the
proof of Lemma \ref{Lemma K is faithful and full}, is an equivalence. 
Then $F \cong \mathcal{K}^{-1}((\So_F,\varepsilon))=\ad_\alpha\circ L^*$ is a tensor equivalence.
\end{proof}
Let $(\D, \N)$ be an object in the category $\mathfrak{Funct}$ and $\So$ be an indecomposable right  $\D$-module category. It is possible associate to the triple $(\D, \N, \So)$  an equivalent object to $(\D, \N)$ in $\mathfrak{Funct}$.
To do so consider the pair $(\END_{\D}(\So),\So \boxtimes_{\D}\N)\in \mathfrak{Funct}$.
It follows from Proposition \ref{equivalencias en
Cor} that $(\So,\id_{\So \boxtimes_{\D}\N})$ is an equivalence in
$\mathfrak{Cor}$. Then, by Theorem \ref{description of Funct}, we
have an equivalence $\D^*_{\N}\cong (\END_{\D}(\So))^*_{\So
\boxtimes_{\D}\N}$ in $\mathfrak{Funct}$ as we desired.
\begin{theorem}\label{Teor aplicacion a contruccion equivalents}
Let $(\D, \N)$ be an object in $\mathfrak{Funct}$. Any other pair
$(\C, \M)$ in $\mathfrak{Funct}$ such that $\D^*_{\N}\cong\C^*_{\M}$
can be obtained from triple $(\D,\N,\So)$ as above. In other words, there exists a
indecomposable right $\D$-module $\So$ and a tensor equivalence
$F:\D^*_{\So}\to \C$ such that $\M^F\cong \So\boxtimes_{\D}\N$ as
left $\D^*_{\So}$-module categories.
\end{theorem}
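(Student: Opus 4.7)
The plan is to build the triple $(\D, \N, \So)$ directly from a chosen tensor equivalence $\C^*_\M \cong \D^*_\N$ by applying the contravariant equivalence $\mathcal{K}: \mathfrak{Funct} \to \mathfrak{Cor}$ of Theorem \ref{description of Funct} and then invoking \cite[Proposition 4.2]{ENO3} to convert the resulting invertibility of a bimodule category into a tensor equivalence on endofunctors.

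Concretely, fix a tensor equivalence $G: \D^*_\N \to \C^*_\M$ and let $\mathcal{K}(G) = (\So_G, \varepsilon) \in \mathfrak{Cor}((\C,\M),(\D,\N))$. By Proposition \ref{equivalencias en Cor}, $\So_G$ is an invertible $(\C,\D)$-bimodule category, and by construction $\varepsilon: \So_G \boxtimes_\D \N \to \M$ is an equivalence of left $\C$-module categories. I set $\So := \So_G$ regarded only as a right $\D$-module category; invertibility of the bimodule forces $\So$ to be indecomposable. By \cite[Proposition 4.2]{ENO3} the left multiplication tensor functor
$$L: \C \to \END_\D(\So.) = \D^*_\So,\qquad X \mapsto (S \mapsto X \odot S),$$
is a tensor equivalence, and I take $F: \D^*_\So \to \C$ to be a quasi-inverse.

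It remains to identify $\M^F$ with $\So \boxtimes_\D \N$ as left $\D^*_\So$-module categories. The tautological left $\D^*_\So$-action on $\So \boxtimes_\D \N$ is $H \cdot (S \boxtimes_\D N) = H(S) \boxtimes_\D N$, and since $L(X)$ sends $S$ to $X \odot S$, the pullback of this action along $L$ coincides with the standard left $\C$-action $X \cdot (S \boxtimes_\D N) = (X \odot S) \boxtimes_\D N$. Combined with the $\C$-equivariance of $\varepsilon$, this upgrades $\varepsilon$ to a $\D^*_\So$-equivariant equivalence $\So \boxtimes_\D \N \to \M^F$, which is precisely the desired statement.

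The main obstacle, modest as it is here, is the bookkeeping required to keep straight the three coexisting left actions on $\So \boxtimes_\D \N$ — the natural $\C$-action, the tautological $\D^*_\So$-action, and the $\D^*_\So$-action pulled back from $\C$ through $F$ — and to verify that the associated module associativity constraints match coherently rather than merely agreeing as bare actions. This is exactly the point at which the strictness assumption recalled in the proof of Theorem \ref{description of Funct} is convenient, and where one must take care not to confuse $F$ with $L$.
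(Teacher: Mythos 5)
Your proposal is correct and follows essentially the same route as the paper: apply $\mathcal{K}$ to a chosen equivalence $G:\D^*_\N\to\C^*_\M$, invoke Proposition \ref{equivalencias en Cor} to get invertibility of $\So_G$, use \cite[Proposition 4.2]{ENO3} to see that left multiplication $L:\C\to\END_\D(\So_G.)$ is a tensor equivalence with quasi-inverse $F$, and then identify $\M^F$ with $\So_G\boxtimes_\D\N$ via $\varepsilon$. Your explicit comparison of the tautological $\D^*_\So$-action with the $\C$-action pulled back along $L$, and the remark on indecomposability of $\So$, only spell out details the paper leaves as "clearly".
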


\begin{proof}Let $(\C, \M)$ be an object in $\mathfrak{Funct}$ such that
$\D^*_{\N}\cong\C^*_{\M}$ via a tensor functor $G$. By Theorem
\ref{description of Funct}, the pair $(\So_G,\varepsilon)$ in
$\mathfrak{Cor}$ is an equivalence from $(\C, \M)$ to $(\D, \N)$. In
view of Proposition \ref{equivalencias en Cor}, $\So_G$ is an invertible
$(\C,\D)$-bimodule. Hence, by \cite[Proposition 4.2]{ENO3}, the
functor of left multiplication $L: \C\to \END_{\D}(\So_G)$ is an
equivalence. In this way, we have an equivalence $F := L^{-1}:
\D^*_{\So_G}\to \C$. By Subsection
\ref{subsection-duality-functors}  $\M^F\cong \So_G\boxtimes_{\D}\N$ as
left $\D^*_{\So_G}$-module categories.
\end{proof}

\subsection{Functors between equivariantizations of fusion categories}\label{Section functor between equivariant fusion cat}

\subsubsection{Equivariant fusion categories}
Let $\M$ be a category (respectively $\C$ a monoidal  category). We
will denote by $\underline{\operatorname{Aut}(\M)}$ (respectively
$\underline{\operatorname{Aut}_\otimes(\C)}$) the monoidal category where
objects are autoequivalences of $\M$ (respectively tensor
autoequivalences of $\C$), arrows are  natural isomorphisms
(respectively tensor natural isomorphisms) and the tensor product is the composition of functors.

An \emph{action} of a finite group $G$ on $\M$ (respectively $\C$) is a monoidal functor  $*:\underline{G}\to
\underline{\operatorname{Aut}(\M)}$ (respectively $*:\underline{G}\to
\underline{\operatorname{Aut}_\otimes(\C)}$),  where $\underline{G}$ denote the discrete monoidal category. Recall that objects in $\underline{G}$ are elements of $G$ and the tensor product is given by the product of $G$.

Let $G$ be a group acting on  $\M$ (respectively  $\C$) with action $*: \underline{G}\to \underline{\operatorname{Aut}(\M)}$ (respectively $*: \underline{G}\to \underline{\operatorname{Aut}_\otimes(\C)})$, thus we have the
following data \begin{itemize}
\item  functors $\sigma_*: \M\to \M$ (respectively, tensor functor $\sigma_*: \C\to \C$), for each $\sigma\in G$,
\item natural isomorphism (respectively, natural tensor isomorphisms) $\phi(\sigma,\tau): (\sigma\tau)_*\to \sigma_*\circ \tau_*$, for all $\sigma, \tau \in G$;
\end{itemize}
satisfying some coherence conditions, see for example \cite[Section 2]{Tam-act}.

Notice that an action of a finite group $G$ on a category $\M$ is exactly the same as a $\vect_G$-module structure over $\M$ (see Appendix \ref{Apendice} for more details).

\begin{ejem}\label{Example action over pointed fusion categories}

Let $G$ and $F$ be finite groups. Given $\omega \in Z^3(F,k^{\times})$, an
action of $G$ on $\vect_F^\omega$ is determined by a homomorphism
$*:G\to \Aut(F)$ and maps
\begin{align*}
\gamma: G\times F\times F&\to k^{\times}\\
\mu:G\times G\times F&\to k^{\times}
\end{align*}such that
\begin{align*}
\frac{\omega(a,b,c)}{\omega(\sigma_*(a) ,\sigma_*(b),\sigma_*( c))}&= \frac{\mu(\sigma;b,c)\mu(\sigma;a,bc)}{\mu(\sigma;ab,c)\mu(\sigma;a,b)},\\
\frac{\mu(\sigma;\tau_*(a) ,\tau_*(b))\mu(\tau;a,b)}{\mu(\sigma\tau;a,b)}&=\frac{\gamma(\sigma,\tau;ab)}{\gamma(\sigma,\tau ;a)\gamma(\sigma,\tau;b)},\\
\gamma(\sigma\tau,\rho;a)\gamma(\sigma,\tau;\rho_*(a) )&=\gamma(\tau,\rho;a)\gamma(\sigma,\tau\rho;a),
\end{align*}for all $a,b,c\in F, \sigma,\tau,\rho\in G$.

The action is defined as follows: for each $\sigma\in G$, the
associated tensor functor $\sigma_*$ is given by $\sigma_*(k_a):=k_{\sigma_*(a)}$, constraint $\psi(\sigma)_{a,b}= \gamma(\sigma;a,b)\id_{k_{ab}}$ and the tensor natural isomorphism is
$$\phi(\sigma,\tau)_{k_a}=\mu(\sigma,\tau;a)\id_{k_a},$$
for each pair
$\sigma,\tau \in G, a\in F$.
\end{ejem}

Given an action $*:\underline{G}\to
\underline{\operatorname{Aut}_\otimes(\C)}$ of a finite group $G$ on $\C$, the
\emph{$G$-equivariantization}  of $\C$ is the category denoted by $\C^G$ and defined as follows. An object in $\C^G$
is a pair $(V, f)$, where $V$ is an object of $\C$ and $f$
is a family of isomorphisms $f_\sigma: \sigma_*(V) \to V$, $\sigma
\in G$, such that
\begin{equation}\label{deltau} f_{\sigma\tau}=
f_\sigma \sigma_*(f_\tau)\phi(\sigma,\tau),\end{equation}
for all $\sigma, \tau \in G$. A $G$-equivariant morphism
$\phi: (V, f) \to (W, g)$ between $G$-equivariant objects
is a morphism $u: V \to W$ in $\mathcal C$ such that
$g_\sigma\circ \sigma_*(u) = u\circ f_\sigma$, for all $\sigma \in
G$.

Note that for the definition of $\C^G$, is not necessary a monoidal structure over $\C$.
If the category $\C$ is a fusion category and the action of $G$ is by tensor
autoequivalences $*: \underline{G}\to \underline{\operatorname{Aut}_\otimes(\C)},$ then we
have  natural isomorphisms
$$\psi(\sigma)_{V,W}:\sigma_*(V)\otimes \sigma_*(W)\to \sigma_*(V\otimes
W),$$ for all $\sigma\in G$, $V, W\in \C$. Thus $\C^G$ is a fusion category with  tensor product defined by
\begin{align*}
(V, f)\otimes (W, g):= (V\otimes W, h),
\end{align*}where $$h_\sigma= u_\sigma v_\sigma\psi(\sigma)_{V,W}^{-1},$$and the unit
object is $(\bf{1}, \operatorname{id}_{\bf{1}})$.
\begin{ejem}\label{Ejem centro de punteada como equivariantizacion}
\begin{enumerate}[leftmargin=*]
\item If $G$ is a finite group acting trivially on $\vect$, then $\vect ^G= \Rep(G)$.
\item Let $G$ be a finite group and  $\omega \in Z^3(G,k^{\times})$. The finite group $G$ acts by conjugation on $\vect_G^\omega$ via the maps
\begin{align*}
\mu(\sigma,\tau; \rho) &:= \frac{\omega(\sigma\tau\sigma^{-1}, \tau,\rho)}
{\omega(\sigma\tau\sigma^{-1}, \sigma\rho\sigma^{-1}, \sigma)\omega(\sigma, \tau, \rho)},\\
\gamma(\sigma;\tau,\rho) &:= \frac{\omega(\sigma, \tau, \rho) \omega(\sigma\tau\rho(\sigma\tau)^{-1},\sigma,\tau)}
{\omega(\sigma, \tau\rho\tau^{-1}, \tau)},
\end{align*}
for all $\sigma$, $\tau$, $\rho\in G$ (see Example \ref{Example
action over pointed fusion categories}). In this case, the equivariantization
$(\vect_G^\omega)^G=\mathcal{Z}(\vect_G^\omega)$ is the Drinfeld'
center of $\vect_G^\omega$ or, equivalently, the category of representations of
the twisted Drinfeld double of $G$, see \cite{DPP}.
\end{enumerate}
\end{ejem}

\subsection{The semi-direct product and their module categories}

Given an action $*:\underline{G}\to
\underline{\operatorname{Aut}_\otimes(\C)}$ of $G$ on a fusion category $\C$, in addition to $\C^G$ another fusion
category associated is the \emph{semi-direct product} fusion
category, denoted by $\C\rtimes G$ and defined as follows. As $k$-linear category $\C\rtimes G= \bigoplus_{\sigma\in G}\C_\sigma$,
where $\C_\sigma =\C$. The tensor product is given by
$$[X, \sigma]\otimes [Y,\tau]:= [X\otimes \sigma_*(Y), \sigma\tau],\
\  \   X,Y\in \C,\  \   \sigma,\tau\in G,$$ and the unit object is
$[\bf{1},e]$. See \cite{Tam-act} for the associativity constraint.
\medbreak
In order to understand the module categories over $\C\rtimes G$ it is useful  to recall the notion of equivariant module categories \cite{ENO2}. We will use
the approach given in \cite{Ga1}.

Let $G$ be a group and $\C$ be a tensor category equipped with an action $*$ of $G$.
Let $\M$ be a module category over $\C$. 
We define the $G$-graded monoidal category $\underline{\Aut^G_{\C}(\M)}$ of $G$-invariant autoequivalences of $\M$ in the following way. The objects are pairs $(\sigma,(T,c))$, where  $\sigma\in G$, and $(T,c):\M\to\M^{\sigma_*}$ is a $\C$-module equivalence. If $(\sigma, (T,c)), (\tau,(U,d) )\in \underline{\Aut^G_{\C}(\M)} $,  then $(\sigma\tau, (T\circ
U,b))\in \underline{\Aut^G_{\C}(\M)}$, where
\begin{align}\label{invariant-composition} b_{X,M}=((\gamma_{\sigma,\tau})_X\ot\id_{T\circ U(M)})c_{\tau_*(X),U(M)}T(d_{X,M}),
\end{align}
for all $X\in \C$, $M\in \M$. The arrows of $\underline{\Aut^G_{\C}(\M)}$ are just natural isomorphisms of $\C$-module categories.

A \emph{$G$-equivariant $\C$-module category}
is a $\C$-module category $\M$ equipped with a $G$-graded monoidal
functor $(\Phi,\mu):\underline{G}\to \underline{\Aut^G_{\C}(\M)}$.

\begin{ejem}\label{Example C rtimes G acting on C} Let $\C$ be a fusion category with an action of $G$ given by the data $\{(\sigma_*,\psi(\sigma),\phi(\sigma,\tau) )\}_{\sigma,\tau \in G}$. Then $\C$  is a $G$-equivariant module category over itself, where $\Phi(\sigma)=(\sigma_*, \psi(\sigma))$ and
$\mu_{\sigma,\tau}=\phi(\sigma,\tau)$, for all $\sigma, \tau\in
G$.
\end{ejem}

Given a $G$-equivariant $\C$-module category $\So$ we define the fusion
category $\END^G_\C(\So)$ of
$G$-equivariant $\C$-endofunctor of $\So$  as follows. Objects are
pairs $(L,\eta)$, where $L:\So\to \So$ is a $\C$-module endofunctor
and $\eta(\sigma):L\circ \sigma_*\to \sigma_*\circ L$ are natural isomorphisms such that
$$\phi(\sigma,\tau)_{L(X)}\circ \eta(\sigma\tau)_X=
\sigma_*(\eta(\tau)_X)\circ \eta(\sigma)_{\tau_*(X)}\circ
L(\phi(\sigma,\tau)_{X}),$$
for all $\sigma,\tau \in G, X\in \C$.
The arrows and composition in $\END^G_\C(\So)$ are defined in the
obvious way.
\begin{remark}\label{remark invariant}
\begin{enumerate}[leftmargin=*]
\item A $G$-equivariant module category over $\C$ is the same as a $\C\rtimes G$-module category, \cite[Proposition 5.12]{Ga1}. Explicitly, if $\So$ is a
$G$-equivariant $\C$-module category then $\So$ is a $\C\rtimes
G$-module, with action $\otimes: (\C\rtimes G)\times \So\to \So$ given
by $[X,\sigma]\otimes M=X\otimes \sigma_*(M)$, for all $X\in \C$, $\sigma\in G$ and
$M\in \So$.

\item The fusion category  $\END^G_\C(\So)$ is canonically equivalent to $\END_{\C\rtimes G}(\So)$.
\item Since $\So^G\cong\Fun_{\C\rtimes G}(\C,S)$ then $\So^G$ has a canonical structure of
$(\C^G$,$\END^G_\C(\So))$-bimodule category.
\end{enumerate}
\end{remark}

Applying Theorem \ref{Teor aplicacion a contruccion equivalents} to the description of the Drinfeld's center of a
pointed fusion category as an equivariantization (see Example
\ref{Ejem centro de punteada como equivariantizacion}) we have the
following result that can be seen as a generalization of
\cite[Corollary 1.5]{Naidu-Nik}.
\begin{corollary}\label{Corol centro gt}
Let $G$ and $H$ be finite groups and $\omega_G \in H^3(G,k^{\times}),
\omega_H \in H^3(H,k^{\times})$. Then $\mathcal{Z}(\vect_G^{\omega_G})\cong
\mathcal{Z}(\vect_H^{\omega_H})$ as tensor categories (not
necessarily as braided categories) if and only if there is a
pointed $G$-equivariant $\vect_{G}^{\omega_G}$-module category 
$\M$ and a tensor equivalence $\Phi:\vect_{H}^{\omega_H}\rtimes H \to
\END_{\vect_G^{\omega_G}}^G(\M)$ such that $\vect_{H}^{\omega_H} \cong \big (\M^G \big )^\Phi$ as $H$-equivariant
$\vect_{H}^{\omega_H}$-module categories.
\end{corollary}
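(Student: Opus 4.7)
My strategy is to apply Theorem~\ref{Teor aplicacion a contruccion equivalents} after realizing both Drinfeld centers as Morita duals of semi-direct product fusion categories. Combining Example~\ref{Ejem centro de punteada como equivariantizacion} (which identifies $\mathcal{Z}(\vect_G^{\omega_G})$ with the equivariantization $(\vect_G^{\omega_G})^G$ under the conjugation action), Example~\ref{Example C rtimes G acting on C} (which puts a canonical $G$-equivariant structure on $\vect_G^{\omega_G}$), and Remark~\ref{remark invariant} (which gives the equivalence between $G$-equivariant $\vect_G^{\omega_G}$-modules and $\vect_G^{\omega_G}\rtimes G$-modules, and identifies $\END^G_\C(\So)$ with $\END_{\C\rtimes G}(\So)$), together with the elementary fact that left module endofunctors of a regular module act by right-tensoring and that composition reverses the tensor product, one obtains the key identification
\[
\mathcal{Z}(\vect_G^{\omega_G})^{\op} \;\cong\; \END_{\vect_G^{\omega_G}\rtimes G}(\vect_G^{\omega_G}) \;=\; (\vect_G^{\omega_G}\rtimes G)^*_{\vect_G^{\omega_G}},
\]
and analogously for $H$. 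In particular, the desired tensor equivalence $\mathcal{Z}(\vect_G^{\omega_G})\cong\mathcal{Z}(\vect_H^{\omega_H})$ is equivalent to an equivalence of Morita duals $(\vect_G^{\omega_G}\rtimes G)^*_{\vect_G^{\omega_G}}\cong(\vect_H^{\omega_H}\rtimes H)^*_{\vect_H^{\omega_H}}$.

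With this reformulation in hand, I would apply Theorem~\ref{Teor aplicacion a contruccion equivalents} with $(\D,\N)=(\vect_H^{\omega_H}\rtimes H,\vect_H^{\omega_H})$ and, for the role of $(\C,\M')$, the pair $(\vect_G^{\omega_G}\rtimes G,\vect_G^{\omega_G})$. The theorem then produces an indecomposable right $\D$-module $\So$ and a tensor equivalence $F\colon\D^*_\So\to\vect_G^{\omega_G}\rtimes G$ such that $(\vect_G^{\omega_G})^F\cong\So\boxtimes_\D\vect_H^{\omega_H}$ as left $\D^*_\So$-module categories. By Proposition~\ref{equivalencias en Cor} the module $\So$ is invertible as a $(\vect_G^{\omega_G}\rtimes G,\vect_H^{\omega_H}\rtimes H)$-bimodule, and by \cite[Proposition~4.2]{ENO3} the right-multiplication functor $\Phi\colon\D^{\op}=(\vect_H^{\omega_H}\rtimes H)^{\op}\to\END_{\vect_G^{\omega_G}\rtimes G}(.\So)$ is a tensor equivalence. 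Taking $\M$ to be $\So$ viewed as a left $\vect_G^{\omega_G}\rtimes G$-module (which is a pointed $G$-equivariant $\vect_G^{\omega_G}$-module, its pointedness being inherited from that of the invertible bimodule $\So$ between two pointed fusion categories), Remark~\ref{remark invariant}(2) identifies $\END_{\vect_G^{\omega_G}\rtimes G}(.\M)$ with $\END^G_{\vect_G^{\omega_G}}(\M)$, so that $\Phi$ is precisely of the form required by the corollary.

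Finally, the identity $(\vect_G^{\omega_G})^F\cong\So\boxtimes_\D\vect_H^{\omega_H}$ coming from the theorem translates, via the identification $\M^G\cong\Fun_{\vect_G^{\omega_G}\rtimes G}(\vect_G^{\omega_G},\M)$ of Remark~\ref{remark invariant}(3), the Hom–$\boxtimes_\D$ adjunction applied to the invertible bimodule $\So$, and the standard correspondence between right $\vect_H^{\omega_H}\rtimes H$-modules and left $(\vect_H^{\omega_H}\rtimes H)^{\op}=\vect_{H^{\op}}^{\omega_H^{\op}}\rtimes H^{\op}$-modules (i.e.\ $H^{\op}$-equivariant $\vect_{H^{\op}}^{\omega_H^{\op}}$-modules), into the required isomorphism $(\M^G)^\Phi\cong\vect_{H^{\op}}^{\omega_H^{\op}}$. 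The converse direction runs this dictionary in reverse, using Theorem~\ref{description of Funct} to promote the data $(\M,\Phi)$ to an equivalence in $\mathfrak{Cor}$ between the pairs $(\vect_G^{\omega_G}\rtimes G,\vect_G^{\omega_G})$ and $(\vect_H^{\omega_H}\rtimes H,\vect_H^{\omega_H})$, hence an equivalence of Morita duals, which unpacks to $\mathcal{Z}(\vect_G^{\omega_G})\cong\mathcal{Z}(\vect_H^{\omega_H})$. The main obstacle throughout is the careful bookkeeping of the several ``op''-type dualizations---between left and right module categories, between a fusion category and its opposite, and between a tensor functor and the dual functor of Subsection~\ref{subsection-duality-functors}---and verifying that they compose to exactly the $\op$'s appearing in the statement of the corollary.
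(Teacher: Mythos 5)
Your proposal is correct and follows essentially the same route as the paper: the paper's (essentially one-line) proof is precisely to identify $\mathcal{Z}(\vect_G^{\omega_G})=(\vect_G^{\omega_G})^G\cong(\vect_G^{\omega_G}\rtimes G)^*_{\vect_G^{\omega_G}}$ via Example \ref{Ejem centro de punteada como equivariantizacion} and \cite[Proposition 3.2]{Nik}, and then apply Theorem \ref{Teor aplicacion a contruccion equivalents}, with Proposition \ref{equivalencias en Cor}, \cite[Proposition 4.2]{ENO3} and Remark \ref{remark invariant} providing exactly the translation into the stated form, just as you do. The only (immaterial) discrepancy is your extra $\operatorname{op}$ in $\mathcal{Z}(\vect_G^{\omega_G})^{\op}\cong(\vect_G^{\omega_G}\rtimes G)^*_{\vect_G^{\omega_G}}$, which the paper states without the $\operatorname{op}$; since the Drinfeld center is braided, it is tensor equivalent to its opposite, so the ``if and only if'' is unaffected.
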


\subsection{Tensor equivalent equivariant fusion categories}
Recall that the category $\C$ has a canonical $\C\rtimes G$-module
structure, by Example \ref{Example C rtimes G acting on C} and Remark \ref{remark invariant}. The categories $\C\rtimes G$ and
$\C^G$ are Morita equivalent since $(\C\rtimes
G)^*_{\C}\cong \C^G$ by \cite[Proposition 3.2]{Nik}.

Combining Theorem
\ref{description of Funct} and \cite[Proposition 3.2]{Nik} we get:
\begin{corollary}\label{description of functors entre equivariantizations}
Tensor functors between equivariantizations of fusion categories
under the action of a finite group are in correspondence with the arrows of the subcategory of $\mathfrak{Cor}$ whose objects are of the form $(\C\rtimes G,\C)$, where $G$ is a finite group acting on a fusion category $\C$.
\end{corollary}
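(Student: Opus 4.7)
The plan is short: the statement is essentially a direct unpacking of Theorem \ref{description of Funct} combined with the Morita-equivalence description of $\C^G$. By Example \ref{Example C rtimes G acting on C} and Remark \ref{remark invariant}(1), for any fusion category $\C$ equipped with an action of a finite group $G$, the category $\C$ carries a canonical structure of indecomposable left $\C \rtimes G$-module category. By \cite[Proposition 3.2]{Nik} one has a tensor equivalence $(\C\rtimes G)^*_{\C} \cong \C^G$. Consequently every equivariantization is, up to equivalence, the Morita dual associated to a pair of the form $(\C\rtimes G, \C)$, and conversely every such pair gives rise to an equivariantization.

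Let $\mathfrak{Funct}^{eq}$ denote the full subcategory of $\mathfrak{Funct}$ spanned by the objects $(\C\rtimes G, \C)$ with $G$ a finite group acting on a fusion category $\C$. Unpacking the definition of $\mathfrak{Funct}$, a morphism from $(\C\rtimes G, \C)$ to $(\D\rtimes H, \D)$ in $\mathfrak{Funct}^{eq}$ is precisely an equivalence class of tensor functors $(\C\rtimes G)^*_{\C} \to (\D\rtimes H)^*_{\D}$, which via Nikshych's equivalence is exactly an equivalence class of tensor functors $\C^G \to \D^H$ between equivariantizations. Hence $\mathfrak{Funct}^{eq}$ records, up to equivalence, precisely the tensor functors between arbitrary equivariantizations of fusion categories under finite group actions.

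Finally I would apply Theorem \ref{description of Funct}: the contravariant equivalence $\mathcal{K}:\mathfrak{Funct}\to \mathfrak{Cor}$ is the identity on objects, so its restriction to $\mathfrak{Funct}^{eq}$ is a contravariant equivalence onto the full subcategory of $\mathfrak{Cor}$ whose objects are the same pairs $(\C\rtimes G, \C)$. Composing these two bijections yields the asserted correspondence between tensor functors among equivariantizations and arrows in this subcategory of $\mathfrak{Cor}$.

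There is essentially no main obstacle beyond assembling the ingredients correctly; the only bookkeeping point worth checking explicitly is that $\C$, equipped with the $\C\rtimes G$-action described in Remark \ref{remark invariant}(1), is indecomposable as a $\C\rtimes G$-module category, which is immediate since its restriction to $\C\subset \C\rtimes G$ is the regular module $\C$, already indecomposable over $\C$.
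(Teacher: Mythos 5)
Your proposal is correct and follows the same route as the paper: the paper simply combines Theorem \ref{description of Funct} with the canonical $\C\rtimes G$-module structure on $\C$ and Nikshych's equivalence $(\C\rtimes G)^*_{\C}\cong \C^G$, exactly as you do. Your extra remark on indecomposability of $\C$ over $\C\rtimes G$ is a harmless (and valid) addition.
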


Let $G$ be a finite  group and $\C$ be a fusion category. We will
say that  $\C$ is  $G$-graded if there is a decomposition
$\C=\oplus_{x\in G}\C_x$ of $\C$ into a direct sum of full abelian
subcategories such that the bifunctor $\otimes$ maps
$\C_x\times \C_y$ to  $\C_{xy}$, for all $x, y\in
G$. See \cite{ENO} for more details.

Before presenting the main result of this section, we need a
technical lemma.
\begin{lemma}{\cite[Corollary 6.4]{Ga3}}\label{lemma para equivariantes equivalentes}
Let $G$ be a finite group and  $\C=\bigoplus_{x\in G}\C_x$
be a $G$-graded fusion category. Let $\M$ be an indecomposable
$\C$-module category which remains indecomposable as a $\C_e$-module category. Then  $\END_{\C}(\M)\cong
\END_{\C_e}(\M)^{G}$, that is, $\END_{\C}(\M)$ is a
$G$-equivariantization of $\END_{\C_e}(\M)$.
\end{lemma}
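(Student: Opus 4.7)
The plan is to leverage the $G$-grading to present $\END_\C(\M)$ as the $G^{\op}$-equivariantization of $\END_{\C_e}(\M)$, split into three sub-tasks: producing invertible objects in each graded component, assembling them into a $G^{\op}$-action, and identifying $\C$-module endofunctors with equivariant objects. The first step is to show that every $\C_x$ contains an invertible object. For each $x \in G$, the $\C$-action map $a_x : \C_x \boxtimes_{\C_e} \M \to \M$ is an equivalence of left $\C_e$-module categories, with quasi-inverse provided by the action of $\C_{x^{-1}}$. Writing $\M \cong \mathcal{M}od_{\C_e}(A)$ for some indecomposable algebra $A \in \C_e$, transporting this description through $a_x$ forces the $\C_e$-bimodule $\C_x$ to be Morita-trivial relative to $\M$, which in the fusion-category setting implies the existence of an invertible object $g_x \in \C_x$.

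Next, one picks such a $g_x$ in each $\C_x$ and defines $\sigma_x$ to be the tensor autoequivalence of $\C_e$ given by conjugation, $\sigma_x(Y) := g_x \otimes Y \otimes g_x^{-1}$, together with the coherence isomorphisms coming from the non-canonical identifications $g_x \otimes g_y \cong g_{xy}$ in $\C_{xy}$. This provides an action of $G$ on $\C_e$ by tensor autoequivalences, which descends to an action on $\END_{\C_e}(\M)$; the opposite group $G^{\op}$ arises because composition of module endofunctors reverses the order in which the invertible objects act. Finally, given a $\C$-module endofunctor $(F,c)$ of $\M$, its restriction to a $\C_e$-module endofunctor together with the family of isomorphisms $c_{g_x,-} : F(g_x \otimes -) \to g_x \otimes F(-)$ constitutes exactly an equivariant object in $\END_{\C_e}(\M)^{G^{\op}}$, and conversely an equivariant object reconstructs the $\C$-module structure since every simple object of $\C_x$ is of the form $Y \otimes g_x$ for some $Y \in \C_e$. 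Matching the coherence data on both sides yields the desired tensor equivalence.

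The crux of the argument is the first step, producing invertible objects in every $\C_x$ from the hypothesis that $\M$ remains indecomposable over $\C_e$. Without this indecomposability assumption one only knows abstractly that $\C_x$ is an invertible $\C_e$-bimodule category, which does not in general force the existence of any invertible object. Once this step is secured, the subsequent steps are a formal unpacking of bimodule equivalences, the definition of equivariantization, and the coherence data inherent in the $G$-grading, so I expect the main technical effort to be concentrated in that initial reduction.
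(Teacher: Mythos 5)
The paper itself gives no argument here: the lemma is quoted from \cite{Ga3} (Corollary 6.4) and the proof is left to that reference, so what has to be assessed is your argument on its own, and its crux step does not hold. From the equivalence $\C_x\boxtimes_{\C_e}\M\simeq\M$ you infer that $\C_x$ must contain an invertible object. That inference is false: the equivalence only says that the class of the invertible $\C_e$-bimodule category $\C_x$ lies in the stabilizer of $[\M]$ inside $\BrPic(\C_e)$, whereas ``$\C_x$ contains an invertible object'' says that $\C_x$ is quasi-trivial, i.e.\ $\C_x\simeq\C_e$ as a one-sided $\C_e$-module category, and the stabilizer of a module category is in general much larger than the subgroup of quasi-trivial classes. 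Concretely, let $\C=\Rep(D_8)\cong\TY(\mathbb{Z}/2\mathbb{Z}\times\mathbb{Z}/2\mathbb{Z},\chi,\tau)$ with its universal (faithful) $\mathbb{Z}/2\mathbb{Z}$-grading, so that $\C_e\cong\Rep(\mathbb{Z}/2\mathbb{Z}\times\mathbb{Z}/2\mathbb{Z})$ is the pointed part and the nontrivial component consists of the single two-dimensional simple object, and let $\M=\vect$ via the forgetful functor. Then $\M$ is indecomposable over $\C$ and remains indecomposable over $\C_e$ (it has a unique simple object), the nontrivial component acts on $\M$ and stabilizes its equivalence class, yet it contains no invertible object. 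In particular the first clause of the lemma does not follow from the hypotheses as transcribed here (the precise hypotheses must be taken from \cite{Ga3}), so any attempt to derive it, and with it your construction of the $G^{op}$-action by conjugation by chosen invertibles $g_x$ with identifications $g_x\otimes g_y\cong g_{xy}$, is bound to fail; your argument only covers quasi-trivially graded extensions such as $\C_e\rtimes G$.

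The equivariantization clause, by contrast, does hold in this generality, but its proof must not pass through invertible objects. In the example above, $\END_{\C}(\vect)\cong\vect_{D_8}$ is indeed a $\mathbb{Z}/2\mathbb{Z}$-equivariantization of $\END_{\C_e}(\vect)\cong\vect_{\mathbb{Z}/2\mathbb{Z}\times\mathbb{Z}/2\mathbb{Z}}$, for an action whose underlying functor is the identity but whose tensor structure is a nondegenerate $2$-cocycle --- visibly not conjugation by invertible objects of $\C$. The correct mechanism is: since the grading is faithful, each $\C_x$ is an invertible $\C_e$-bimodule category, so the action functor $a_x\colon\C_x\boxtimes_{\C_e}\M\to\M$ is an equivalence of $\C_e$-module categories; conjugation $F\mapsto a_x\circ(\id_{\C_x}\boxtimes_{\C_e}F)\circ a_x^{-1}$ (as in Definition \ref{functor ad}) yields tensor autoequivalences of $\END_{\C_e}(\M)$ assembling into a $G^{op}$-action, and a $\C$-module functor structure on a $\C_e$-module endofunctor is exactly a $G^{op}$-equivariant structure for this action. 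Equivalently, the grading gives an embedding $\Rep(G)\to\mathcal{Z}(\C)\cong\mathcal{Z}(\C^*_\M)$, and indecomposability of $\M$ over $\C_e$ forces the composite $\Rep(G)\to\C^*_\M$ to be fully faithful, exhibiting $\C^*_\M$ as a $G$-equivariantization of its de-equivariantization, which is identified with $(\C_e)^*_\M$; this is also where the hypothesis guarantees that $\END_{\C_e}(\M)$ is fusion rather than multifusion. So the first step of your plan should be replaced by this construction rather than repaired.
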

\begin{theorem}\label{teorema Equivalent Equivariantizations}
Let $\C$ be a fusion category, $G$ and $H$ be finite groups and $*:\underline{G}\to \underline{\Aut_\otimes(\C)}$ be an
action of $G$ on $\C$.
\begin{enumerate}
\item Let $\So$ be an indecomposable $G$-equivariant $\C$-module category and  let $\END^G_\C(\So)=\bigoplus_{h\in H} \END^G_\C(\So)_h$ be a faithfully $H$-grading such that $\So^G$  is equivalent to
$\END^G_\C(\So)_e$ as $\END^G_\C(\So)_e$-module categories. Then $\END^G_\C(\So)_{\So^G}^*\cong \big (\END^G_\C(\So)_e \big )^H$, that is,
$\END^G_\C(\So)_{\So^G}^*$ is an $H$-equivariantization,
and $\big (\END^G_\C(\So)_e \big )^H \cong \C^G$ as fusion categories.
\item Conversely, for every fusion category of the form $\D^H$ that is tensor equivalent to $\C^G$
there exists a $G$-equivariant $\C$-module category $\So$ and a faithful $H$-grading
in $\END^G_\C(\So)$ such that $\END^G_\C(\So)_e\cong \D$ and
$\D^H\cong \big (\END^G_\C(\So)_e \big )^H$.
\end{enumerate}
\end{theorem}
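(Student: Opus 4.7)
The strategy for (1) is to compute the Morita dual $\END^G_\C(\So)^*_{\So^G}$ in two different ways and compare. First, I will exploit the Morita equivalence between $\C \rtimes G$ and $\C^G$. By Remark \ref{remark invariant} (1), $\So$ is an indecomposable $(\C \rtimes G)$-module and $\END^G_\C(\So) \cong \END_{\C \rtimes G}(\So)$; by Remark \ref{remark invariant} (3), $\So^G \cong \Fun_{\C \rtimes G}(\C,\So)$ carries the natural bimodule structure. Applying Remark \ref{Remark equivalencia Morita} (2) to the $(\C\rtimes G)$-module $\C$, the transport $\M.\mapsto \Fun_{\C\rtimes G}(\C,\M.)$ is a $2$-equivalence, so $\So^G$ implements a Morita equivalence between $\END_{\C\rtimes G}(\So)$ and $\END_{\C\rtimes G}(\C)\cong \C^G$ (the last isomorphism is \cite[Proposition 3.2]{Nik}). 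Hence $\END^G_\C(\So)^*_{\So^G}\cong \C^G$.

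Second, I apply Lemma \ref{lemma para equivariantes equivalentes} to the graded fusion category $\END^G_\C(\So)=\bigoplus_{h\in H^{op}}\END^G_\C(\So)_h$ and the module $\So^G$. The hypothesis $\So^G\cong \END^G_\C(\So)_e$ as $\END^G_\C(\So)_e$-module guarantees that $\So^G$ remains indecomposable over the trivial component (since any fusion category is indecomposable as a module over itself). Lemma \ref{lemma para equivariantes equivalentes} then yields
\[\END^G_\C(\So)^*_{\So^G}\;\cong\; \bigl(\END_{\END^G_\C(\So)_e}(\So^G)\bigr)^{H},\]
and the inner endomorphism category reduces to $\END^G_\C(\So)_e$ itself via the module equivalence $\So^G\cong \END^G_\C(\So)_e$. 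Concatenating with the first computation gives $(\END^G_\C(\So)_e)^H\cong \C^G$, as claimed.

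For (2), given $\D$ with an $H$-action and a tensor equivalence $\D^H\cong \C^G$, I will produce the $\So$ via Morita theory. Both $\C^G\cong (\C\rtimes G)^*_\C$ and $\D^H\cong (\D\rtimes H)^*_\D$ are Morita duals realized by the canonical rank-one modules, so the hypothesis gives a Morita equivalence between $\C\rtimes G$ and $\D\rtimes H$. By Theorem \ref{Teor aplicacion a contruccion equivalents} applied to $(\C\rtimes G,\C)$, there is an indecomposable right $(\C\rtimes G)$-module category $\So$ together with a tensor equivalence $\Phi:\END_{\C\rtimes G}(\So)\to \D\rtimes H$ such that $\C\cong \So\boxtimes_{\C\rtimes G}\D$ on the appropriate side. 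Regarding $\So$ as a $G$-equivariant $\C$-module category (Remark \ref{remark invariant} (1)) and transporting the canonical $H$-grading of $\D\rtimes H$ through $\Phi$, I obtain a faithful $H^{op}$-grading on $\END^G_\C(\So)$ with trivial component equivalent to $\D$. The required identification $\So^G\cong \END^G_\C(\So)_e$ as $\END^G_\C(\So)_e$-modules is then obtained by applying $\Fun_{\C\rtimes G}(\C,-)$ to the equivalence $\So\cong \So$ and chasing the construction. Once this is verified, part (1) immediately gives $(\END^G_\C(\So)_e)^H\cong \C^G\cong \D^H$.

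The main obstacle, aside from carefully tracking left/right and op conventions in Morita duality, is the bookkeeping in (2): one must verify that the $H^{op}$-grading inherited from $\D\rtimes H$ on $\END^G_\C(\So)$ is indeed such that $\So^G$ matches $\END^G_\C(\So)_e$ as module categories. This boils down to identifying the bimodule $\So^G$ with the trivial graded piece of $\D\rtimes H$ acting on $\D$, which I expect to follow from the explicit description in Remark \ref{remark invariant} together with the compatibility built into Theorem \ref{Teor aplicacion a contruccion equivalents}.
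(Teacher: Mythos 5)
Your plan follows essentially the same route as the paper: part (1) rests on the invertibility of $\So^G$ as a $(\C^G,\END^G_\C(\So))$-bimodule (equivalently, the $2$-equivalence $\Fun_{\C\rtimes G}(\C,-)$) together with Lemma \ref{lemma para equivariantes equivalentes}, and part (2) extracts $\So$ from the Morita presentations $\C^G\cong(\C\rtimes G)^*_\C$ and $\D^H\cong(\D\rtimes H)^*_\D$ via the main theorem and transports the grading of $\D\rtimes H$ to $\END^G_\C(\So)$. The only slips are notational, not conceptual: Theorem \ref{Teor aplicacion a contruccion equivalents} gives $\D^\Phi\cong\So\boxtimes_{\C\rtimes G}\C$ (your ``$\C\cong\So\boxtimes_{\C\rtimes G}\D$'' does not typecheck), and the verification you defer --- that $\So^G$ is the regular module over the trivial component --- is precisely the paper's short chain $\D\cong\So^{\dag}\boxtimes_{\C\rtimes G}\C\cong\Fun_{\C\rtimes G}(.\So,.\C)\cong\Fun_{\C\rtimes G}(.\C,.\So)^{op}\cong(\So^G)^{op}$, whose op-flips are also what turn the $H$-grading you transport into the claimed $H^{op}$-grading.
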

\begin{proof}
Since $\So^G$ is an invertible  $(\C^G,\END^G_\C(\So))$-bimodule
category the left action defines a tensor equivalence $L: \C^G\to
\END_{\END^G_\C(\So)}(\So^G.)$, by \cite[Proposition 4.2]{ENO3}. Then, Lemma
\ref{lemma para equivariantes equivalentes} implies that
$\END_{\END^G_\C(\So)}(\So^G.)$ is an $H$-equivariantization of
$\Big( \END(\So^G)_{\END^G_\C(\So)}\Big )_e$.

Conversely, let $G$ and $H$ be finite groups acting on fusion
categories $\C$ and $\D$ respectively. Since $\C^G\cong
(\C\rtimes G)_\C^*$ and $\D^H\cong (\D\rtimes H)_\D^*$, if $\C^G$ and
$\D^H$ are tensor equivalent. Then by Theorem \ref{description of Funct}, there is
an invertible $(\D\rtimes H,\C\rtimes G)$-bimodule category $\So^{\dag}$
such that $\So^{\dag} \boxtimes_{\C\rtimes G} \C\cong \D$ as  $\D\rtimes H$-module
categories. The category $\C$ is an invertible  $(\C\rtimes G,\C^G)$-bimodule category, by \cite[Theorem 4.1]{Tam-act}. If we define  $\So:=(\So^{\dag})^{\op}$ as a 
$(\C\rtimes G$,$\D\rtimes H)$-bimodule category, then the equivalences
\begin{align*}
\D &\cong \So^{\dag} \boxtimes_{\C\rtimes G} \C \\
&\cong \Fun_{\C\rtimes G}(.\So,.\C)\\
&\cong \Fun_{\C\rtimes G}(.\C,.\So)^{\op}\\
& \cong (\So^G)^{\op}
\end{align*}
imply that $\So^G$ is an invertible $(\C^G,\D\rtimes H)$-bimodule
category with $\So^G\cong \D^{\op}$ as right $\D\rtimes H$-module categories.
Using the bimodule category structure, we have a tensor
equivalence $R:(\D\rtimes H)^{\operatorname{op}}\to (\C^G)^*_{S^G}\cong_\otimes \END^G_\C(\So)$.
Then $\END^G_\C(\So)$ has an $H$-grading and  $\So^G\cong \END^G_\D(\So)_e$ as left
$\END^G_\D(\So)_e$-module categories.
\end{proof}

\subsubsection{Example: Isocategorical groups}
Two finite groups $G$ and $H$ are called \emph{isocategorical} if their categories of representations are tensor equivalent \cite{EG}.

Using Theorem \ref{Teor aplicacion a contruccion equivalents} we can give
an alternative proof to \cite[Corollary 6.2] {Davy}, a key result in the classification of isocategorical groups,  see \cite{EG, Davy, IH}. We want to draw the attention of the reader at this point, the classification of isocategorical groups given in \cite{EG} and \cite{Davy} is more explicit. 

In this subsection we will follow the notation of the Appendix \ref{Apendice}. 
Let $G$ be a finite group. Consider $G$ acting trivially on $\vect$,
then $\Rep G = (\vect)^G$ . We will apply Theorem \ref{Teor aplicacion a contruccion equivalents} to
 $\D=\vect$. Let $\M$ be a $\vect_G$-module category.
Since extensions of $\vect$ are pointed fusion categories, the $\vect_G$-module category $\M$ must be pointed (see Appendix \ref{Subseccion module cat}) with $\M^G\cong
\vect$. Thus $\M^G$ must have rank one.
By \cite[Theorem 3.4]{Deepak} (see also Proposition \ref{prop clasificacion pointed module}, pointed module categories over $\vect_G$ are in
correspondence with pairs $(A,\psi)$, where $A$ is a normal abelian
subgroup of $G$ and $\psi\in H^2(A,\psi)$ is $\Ad_G$-invariant. Since
$\M(X,\mu)^G= \Fun_{\vect_G}(\vect_G,\M(X,\mu))$, it follows from \cite[Proposition 3.1]{ostrik} (see also Corollary \ref{corolario alternativo teo de ostrik}) that simple
objects are in correspondence with $\psi$-projective representations of $A$. In particular, the rank one condition on $\M(X,\mu)^G$ is equivalent to the non-degeneracy of $\psi$, that is, $k_\psi[A]$ is a simple algebra.

By Theorem \ref{Teor aplicacion a contruccion equivalents},
every group $H$ such that $\Rep(H)\cong \Rep(G)$ as fusion categories
can be constructed as $\Aut_{\vect_G}(\M(A,\psi))$
(see Appendix \ref{Subseccion module cat}),
where $A$ is a normal abelian subgroup of $G$ and $\psi\in H^2(A,k^{\times})$ is a
non-degenerate $\Ad_G$-invariant cohomology class.

\section{Applications to the Brauer-Picard group}\label{Seccion aplicaciones BrPic}

\subsection{Proof of Theorem \ref{RZ}}\label{Proof of Theorem RZ}

By Theorem \ref{description of Funct} we have the following exact sequences of groups:
\begin{align*}
& \xymatrix{  &\Aut_{\C}(\M)  \ar[r]^{\operatorname{conj}_{\M}} \ar[dd]^{\Omega} &
\Aut_{\otimes}(\C_{\M}^*)\ar[r]^{\pi_1\circ \mathcal{K}}\ar[dd]^{\simeq \text{ }
\mathcal{K}} & \operatorname{BrPic}(\C)\ar[dd]^{=}
\\
\\1 \ar[r]^{} &
\operatorname{I}(\C, \M)  \ar[r]  & \Aut_{\mathfrak{Cor}}(\C, \M)
\ar[r]^{\pi_1} & \operatorname{BrPic}(\C),
}
\end{align*}
where  $\operatorname{I}(\C, \M)=\{[(\So,\alpha)]\in \End_\mathfrak{Cor}((\C,\M)): \So\cong\C
\text{ as $\C$-bimodules }\}$ and
$\operatorname{conj}_\M$ is conjugation. The map $\Omega$ is defined by $\Omega(F)= (\C, \operatorname{Id}_\C\boxtimes_\C F)$ and
$(\pi_1\circ \mathcal{K})(G)=\So_G = \Fun_{\END_\C(.\M)}(.\M,.\M^G)$
is the invertible $\C$-bimodule category associated by $
\mathcal{K}$ to $G\in \Aut_\otimes(\END_{\C}(\M))$.

Let us consider the abelian
group of (isomorphism classes of) invertible objects
$\Inv(\mathcal{Z}(\C))$ of the Drinfeld center $\mathcal{Z}(\C)$ of
$\C$. For every $\C$-module category $\M$ we have a group
homomorphism
\begin{align*}
s:\Inv(\mathcal{Z}(\C))\to \Aut_\C(\M),
\end{align*}
where $s_X(M):= X\otimes M$ and $\gamma_{V,M}: s_X(V\otimes M)\to
V\otimes s_X(M)$ is given by $\gamma_{V,M}:=c_{X,V}\otimes
\id_{M}$, for all $(X,c_{X,-})\in \Inv(\mathcal{Z}(\C))$, $V\in \C$,
$M\in \M$. From here we obtain the sequence
\begin{equation}\label{exact seq}
    1\to\ker(s)\to \Inv(\mathcal{Z}(\C))\overset{s}{\to} \Aut_{\C}(\M)\to\Aut_{\otimes}(\C_{\M}^*)\to 
\operatorname{BrPic}(\C).
\end{equation}

Since $\Omega$ is surjective, if we prove that
$\ker(\Omega)=\operatorname{Im}(s)$ then
$\operatorname{Im} (\operatorname{conj}_\M)=\operatorname{ker}(\pi_1\circ \mathcal{K})\cong \operatorname{I}(\C, \M)$. It follows that the sequence \eqref{exact seq} is exact. We verify that  $\ker(\Omega)=\operatorname{Im}(s)$ as follows. If $\Omega(F)= \id_{(\C,\M)}$,  there is an invertible $\C$-bimodule
functor $\phi:\C\to \C$ such that $\phi\boxtimes_\C\M $ is
isomorphic to $\operatorname{Id}_\C\boxtimes_\C F\cong F$ as $\C$-module
functors. Since, every invertible $\C$-bimodule functor has the form
$X\otimes(-)$ for a unique $X\in \Inv(\mathcal{Z}(\C))$, then
$\phi\boxtimes_\C\M \cong s_X$, and  $F\in \ker(\Omega)$ if and only
if there is $X\in \Inv(\mathcal{Z}(\C))$ such that $F\cong s_X$,
that is, $\ker(\Omega)=\operatorname{Im}(s)$. \qed

\begin{corollary}\label{corolario RZ caso trenzado}Let $\C$ be a braided fusion category. Set $\M
\cong  \C$. In this case, we have an inclusion of groups 
$\Aut_{\otimes}(\C)\hookrightarrow \operatorname{BrPic}(\C)$.
\end{corollary}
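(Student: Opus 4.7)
The goal is to show that when $\C$ is braided, the map $\pi_1\circ\mathcal{K}:\Aut_\otimes(\C)\to \operatorname{BrPic}(\C)$ is injective. Specializing Theorem \ref{RZ} to $\M = \C$ gives the exact sequence
\[
\Inv(\mathcal{Z}(\C))\overset{s}{\to}\Inv(\C)\overset{\operatorname{conj}_\C}{\to}\Aut_\otimes(\C)\overset{\pi_1\circ\mathcal{K}}{\to}\operatorname{BrPic}(\C),
\]
so by exactness the desired injectivity is equivalent to the statement that the image of $\operatorname{conj}_\C$ is trivial, i.e., that conjugation by any invertible object $X\in\Inv(\C)$ is tensor-isomorphic to the identity autoequivalence of $\C$.

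My plan is to establish the triviality of $\operatorname{conj}_\C$ by exhibiting a canonical lift along $s$. The braided structure gives a canonical braided embedding $\iota:\C\hookrightarrow\mathcal{Z}(\C)$ sending $X$ to $(X,c_{-,X})$, and this restricts to a group homomorphism $\Inv(\C)\to\Inv(\mathcal{Z}(\C))$ that is a section of $s$. Concretely, for $X\in\Inv(\C)$ one has $s(\iota(X))=X$, so $X\in\Inv(\C)$ lies in the image of $s$. By exactness at $\Inv(\C)$, this forces $\operatorname{conj}_X$ to be trivial in $\Aut_\otimes(\C)$, hence the whole map $\operatorname{conj}_\C$ is trivial and $\pi_1\circ\mathcal{K}$ is injective.

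As a sanity check, one can also verify triviality directly: the natural isomorphism
\[
\eta_Y \;=\; (\id_Y\otimes\operatorname{ev}_X)\circ(c_{X,Y}\otimes\id_{X^{-1}}):X\otimes Y\otimes X^{-1}\longrightarrow Y
\]
is a tensor natural isomorphism from $\operatorname{conj}_X$ to $\id_\C$, with the tensor compatibility being exactly the hexagon axiom for the braiding. This is the same statement translated through the identification $\Inv(\C)\cong\Inv(\mathcal{Z}(\C))/\Aut_\otimes(\id_\C)\cdot\ker(\text{forget})$, but the lift viewpoint is cleaner.

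There is no real obstacle here: the only subtle point is to make sure one is viewing the autoequivalence $\operatorname{conj}_X$ as a \emph{tensor} autoequivalence (not merely a linear one) when checking it is trivial, and that is precisely what the hexagon axiom and the center lift $(X,c_{-,X})$ provide. With $\operatorname{Inn}(\Aut_\otimes(\C))=\{1\}$, the five-term sequence of Theorem \ref{RZ} collapses to the claimed inclusion $\Aut_\otimes(\C)\hookrightarrow\operatorname{BrPic}(\C)$.
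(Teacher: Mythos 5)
Your proof is correct and follows essentially the same route as the paper: specialize the exact sequence of Theorem \ref{RZ} to $\M=\C$ and note that the braiding forces $\operatorname{conj}_\C$ to be trivial, so $\pi_1\circ\mathcal{K}$ is injective by exactness. The only difference is that you justify the triviality of $\operatorname{conj}_\C$ (via the section $X\mapsto (X,c_{-,X})$ of $s$, or directly via the hexagon), a step the paper simply asserts.
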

\begin{proof}Since $\C$ is braided, the map $\operatorname{conj}_M$ is trivial. Consequently, the map $\pi_1\circ \mathcal{K}$ is injective.
\end{proof}
Next recall that there are two
equi\-valent realizations of $\Out_{\otimes}(\C)$, one of the presentations
is obtained by considering  $\Out_{\otimes}(\C)$ as the subgroup of $\operatorname{BrPic}(\C)$
whose elements are equivalence classes of \emph{quasi-trivial $\C$-bimodule categories},
that is $\C$-bimodules categories equivalent to $\C$ as right $\C$-module categories
\cite[Subsection 4.3]{ENO3}. The other realization is given by considering $\Out_{\otimes}(\C)$
as the group of equivalence classes of tensor autoequivalences of $\C$ up to pseudonatural isomorphisms \cite[Subsection 3.1]{Ga2}.

\begin{remark}\label{RZ-Inn}
The Rosenberg-Zelinsky sequence in the case that $\M= \C$ has
the form $$1\to \operatorname{Inn}(\C)\hookrightarrow
\Aut_\otimes(\C)\overset{\pi}{\to}  \operatorname{BrPic}(\C),$$ where
$\pi(\sigma)=\C^\sigma$ is the quasi-trivial
$\C$-bimodule that is $\C$ as a right $\C$-module, but  with left
action given by the left multiplication twisted by the tensor
autoequivalence $\sigma$ of $\C$.   The image of $\pi$ is exactly the group
$\Out_\otimes(\C)$ described above. Indeed, for this particular case, the exact sequence of Theorem \ref{RZ} can be rewritten as follows:
$$1\to\Aut_{\otimes}(\id_{\C})\to \Inv(\mathcal{Z}(\C))\overset{\pi_1}{\to} \Inv(\C) \overset{\operatorname{conj}_{\C}}{\to} \Aut_\otimes
(\C)\overset{\pi}{\to} \operatorname{BrPic}(\C),$$
here $\Inv(\mathcal{Z}(\C))\overset{\pi_1}{\to} \Inv(\C)$ corresponds to forgetting the half braiding, that is $\pi_1(X,c_{-,X})=X$. Note that half braidings on $\bf 1$ are exactly monoidal natural isomorphism of the identity functor, hence $\ker(\pi_1)=\Aut_{\otimes}(\id_{\C})$. 
\end{remark} 

\begin{ejem} Let $G$ be a finite group.  Given $f \in Z^n(G, k^{\times})$ and
$\theta\in \Aut(G)$, we will denote by $f^{\theta}$ the
$n$-cocycle in $G$ defined by $f^{\theta}(\sigma_1, \cdots \sigma_n) = f(\theta(\sigma_1), \cdots \theta(\sigma_n))$,
for $\sigma_1,\ldots, \sigma_n \in G$.
This defines an action of $\Aut(G)$ on $H^*(G, k^{\times})$ which factors through $\Inn(G)$ giving rise to an
action of $\Out(G)$.

If $\omega \in Z^3(G,k^{\times})$,  $$\Aut_{\otimes} (\vect_G^{\omega}) =
\{ (f, \gamma_f) \in\Aut(G)\times C^2(G, k^{\times}) :
\delta(\gamma_f) = \frac{\omega^f}{\omega}\}/\sim,$$ where
$(f,\gamma_f)\sim (g,\gamma_g)$ if and only if $f=g$ and there is
$\theta:G\to k^{\times}$ such that
$\delta(\theta)=\frac{\gamma_f}{\gamma_g}$. Thus we have the exact
sequence (see  \cite[Appendix]{ENO3})
\begin{align}
1\to H^2(G, k^{\times}) \to \Aut_{\otimes}(\vect_G^{\omega}) &\to  \Stab_{\Aut(G)} ([\omega])\to 1 \label{Aut vec_G}\\
(f,\gamma_f) &\mapsto f.\notag
\end{align}

Note that the exact sequence \eqref{Aut vec_G} splits if
there is a $\Stab_{\Aut(G)} ([\omega])$-invariant representative
3-cocycle of $[\omega]$. In particular if $\omega=1$,
$\Aut_{\otimes}(\vect_G^{\omega})= H^2(G, k^{\times})\rtimes
\Aut(G)$.

By Example \ref{Ejem centro de punteada como equivariantizacion},
$\Z(\vect_G^{\omega})$ is a $G$-equivariantization with respect to the
$G$-action by conjugation and maps $\gamma $ and $\mu$. A simple object $V$ in $\Z(\vect_G^{\omega})$ is
invertible if and only if $\dim_k(V)=1$. In which case,  $V=k_\rho$, with $\rho
\in \mathcal{Z}(G)$ and $\gamma(-,-;\rho)\in B^2(G,k^{\times})$. Thus we
have an exact sequence  $$1\to \widehat{G}\to
\Inv(\Z(\vect_G^{\omega}))\to \mathcal{Z}(G)^\omega\to 1,$$ where
$\mathcal{Z}(G)^{\omega} = \{\rho\in \mathcal{Z}(G)|
\gamma(-,-;\rho)\in B^2(G,k^{\times})\}.$

The exact sequence of Remark \ref{RZ-Inn} implies that
$\Inn(\vect_G^{\omega}) \simeq G/Z(G)^{\omega}$. Therefore the
second exact sequence of Remark \ref{RZ-Inn} can be rewritten as:
\begin{equation}\label{equation out vecG}
1\to Z(G)^{\omega}\to G \overset{\overline{\Ad}}{\to}\Aut_{\otimes}(\vect_G^{\omega})\to \operatorname{Out}_\otimes(\vect_G^{\omega}).
\end{equation}
\begin{remark}
\begin{enumerate}
\item In general, $Z(G)_{\omega} \varsubsetneq Z(G)$.
For example, when $G$ is an abelian group and
$\mathcal{Z}(\vect_G^{\omega})$ is not pointed.
\item If $\omega=1$, then the exact sequence \eqref{equation out vecG} implies that $\operatorname{Out}_\otimes(\vect_G)=H^2(G,k^{\times})\rtimes \Out(G)$.
\end{enumerate}
\end{remark}
\end{ejem}

\begin{ejem}\label{ejemplo Out de TY}
Let $\C = \TY (A, \chi, \tau)$ be the Tambara-Yamagami category
associated to a finite (necessarily abelian) group $A$, a symmetric
non-degene\-rate bicharacter $\chi : A\times A \rightarrow k^{\times}$
and an element $\tau\in k$ satisfying $|A|\tau^2 = 1$, see \cite{TY}.
Since by \cite[Propositon 1]{Tambara}
$$\Aut_\otimes(\C) = \{\sigma\in \Aut(A) :
\chi(\sigma(a),\sigma(b))= \chi(a,b),\text{ } \forall a, b\in A\},$$
thus
$\operatorname{Inn}(\C)$ is trivial. Then it follows from Remark
\ref{RZ-Inn} that
$$\Out_{\otimes}(\TY (A, \chi,
\tau)) = \Aut_\otimes(\TY (A, \chi, \tau)) \hookrightarrow \operatorname{BrPic}(\TY (A, \chi, \tau)).$$
\end{ejem}

\subsection{Proof of Theorem \ref{coro doble cosets}} \label{Subseccion sobre T(C) y el grupo de BrPic}


Recall from Subsection  \ref{Proof of Theorem RZ}
that $\Out_\otimes(\C)$ is the subgroup of $\operatorname{BrPic}(\C)$ consisting of all quasi-trivial $\C$-bimodule categories. 

We define $\mathcal{T}(\C)$ as the set of equivalence classes of right
$\C$-module categories $\M$ such that $\C\cong \C_\M^*$ as tensor
categories.

By \cite[Proposition 4.2]{ENO3}, we have a map $$\operatorname{BrPic}(\C)\to
\mathcal{T}(\C),$$ given by forgetting the left $\C$-module structure.
This map factorizes through the left action of the subgroup
$\Out_\otimes(\C)$, thus we have a map
$$U: \Out_\otimes(\C) \setminus \operatorname{BrPic}(\C) \to \mathcal{T}(\C).$$
Note that the sets $ \Out_\otimes(\C) \setminus \operatorname{BrPic}(\C)$
and $\mathcal{T}(\C)$ are right  $\operatorname{BrPic}(\C)$-sets in a
natural way. Since $U$ is a map of transitive $\operatorname{BrPic}(\C)$-sets and  the stabilizer of $\C\in \mathcal{T}(\C)$ is $\Out_\otimes(\C)$, the map $U$ is bijective. Theorem \ref{coro doble cosets} is a direct consequence
of the bijectivity of $U$. \qed

\subsubsection{Generalized crossed product for groups}\label{Generalized crossed product}
Let $G$ be a finite group and $F\subseteq G$ be a subgroup. Once a set $Q$ of
simultaneous representatives of the left and right cosets of $F$ in
$G$ is fixed, the group $G$ can be described as a generalized crossed
product as follows. The uniqueness of the factorization $G = FQ$
implies that there are well defined maps $$\fde: Q \times F \to F,
\qquad  \fiz: Q \times F \to Q,$$
$$.: Q \times Q \to Q, \qquad \theta: Q \times Q \to F,$$ determined by the conditions
\begin{align*}qx & = (q \fde x) (q \fiz x), \qquad q \in Q, \, x \in F;\\
pq & = \theta(p, q) p.q, \qquad p, q \in Q.\end{align*} The set
$F\times Q$ with the product $$(u,s)(v,t)=(u(s\fde v)\theta(s\fiz
v,t),(s\fiz v)\cdot t)$$ is a group that we will denote by $F\#^{\fde,\fiz}_{\theta, \ \cdot}Q$.
Moreover, $F\#^{\fde,\fiz}_{\theta,\ \cdot}Q$ is isomorphic to $G$,  \cite[Proposition 2.4]{Beggs}.
\begin{remark}
\begin{enumerate}[leftmargin=*]
\item Let $G$ be a finite group and $F\subset G$ be a subgroup.
Let us recall how to construct a set of simultaneous representatives
of the left and right cosets. First, we fix a set of
representatives $Q$ of the double cosets of $F$ in $G$. For $x\in Q$, let  $\{s_j: j\in J_x\}$ be a set of representatives of the left
cosets of $F\cap xFx^{-1}$ and $\{t_j| j\in J_x\}$ be a set of
representatives of the right cosets of $F\cap x^{-1} Fx$ in $F$.
Notice that $FxF=\cup_{i\in J_x} Fxt_i= \cup_{i\in J_x} s_ixF$. Then
$\{s_jxt_j| x\in Q, j\in J_x \}$ is simultaneously a set of
representatives for the right and left cosets.
\item Theorem \ref{coro doble cosets} and the previous remark provide
a systematic way to reduce the calculations of the Brauer-Picard
group of a fusion category to computations of $\Out_\otimes(\C)$ and
the extra data $\theta, \fiz, \fde$ using only a set of representatives of $\mathcal{T}(\C)/\Out_\otimes(\C)$.
\end{enumerate}
\end{remark}

\begin{ejem}
\begin{enumerate}[leftmargin=*]

\item A finite group is called semisimple if its solvable radical is trivial; equivalently the group has no non-trivial abelian normal
subgroups. Let $G$ be a finite
semisimple group (\textit{e.g.} symmetric groups $\mathbb{S}_n$ $(n>4)$ or non abelian
simple groups) and $\omega \in H^3(G,k^{\times})$. Since every module
category in $\mathcal{T}(\vect_{G}^\omega)$ is pointed 
$\mathcal{T}(\vect_{G}^\omega)=\{\vect_{G}^\omega\}$, so
$\operatorname{BrPic}(\vect_{G}^\omega)=\Out_\otimes(\vect_{G}^\omega)$.
\item Let $p$ be a prime number. For any $0\neq \omega \in H^3(\mathbb{Z}/p\mathbb{Z},k^{\times})\cong \mathbb{Z}/p\mathbb{Z}$, we have that 
$\mathcal{T}(\vect_{\mathbb{Z}/p\mathbb{Z}}^\omega)=\{\vect_{\mathbb{Z}/p\mathbb{Z}}^\omega\}.$ 
Thus $$\operatorname{BrPic}(\vect_{\mathbb{Z}/p\mathbb{Z}}^\omega)=\Out_\otimes(\vect_{\mathbb{Z}/p\mathbb{Z}}^\omega)\\
\cong \operatorname{Stab}_{\Aut(\mathbb{Z}/p\mathbb{Z})}([\omega])=\{\text{id}_{\mathbb{Z}/p\mathbb{Z}}\}.$$
\item Let $\C = \TY (\mathbb{Z}/p\mathbb{Z}, \chi, \tau)$
be a non group-theoretical Tambara-Yamagami category, that is,
$\chi(1,1)=e^{\frac{2\pi k }{p}}$ where $k\in
\mathbb{Z}/p\mathbb{Z}$ is a quadratic non-residue. It follows by
\cite[Proposition 5.7]{Ga3} that the only indecomposable $\C$-module
category is $\C$ itself. Thus, $\mathcal{T}(\C)=\{\TY
(\mathbb{Z}/p\mathbb{Z}, \chi, \tau)\}$ and hence $\operatorname{BrPic}(\TY
(\mathbb{Z}/p\mathbb{Z}, \chi, \tau))=\Out_\otimes(\TY
(\mathbb{Z}/p\mathbb{Z} \chi, \tau))$. Then, by Example \ref{ejemplo
Out de TY} we have that
$$\operatorname{BrPic}(\TY (\mathbb{Z}/p\mathbb{Z}, \chi, \tau))\cong\mathbb{Z}/2\mathbb{Z}.$$
\end{enumerate}
\end{ejem}

\section{Invertible bimodule categories over pointed fusion categories and their tensor product }\label{Seccion bimodules de pointed fusion cat}
The goal of this section is to describe explicitly bimodule
categories over pointed fusion categories and their tensor product
in order to provide all ingredients for applying Theorem
\ref{description of Funct} to concrete examples of group-theoretical
fusion categories. 

A \emph{group-theoretical} fusion category is, by definition, a
fusion category Morita equivalent to a pointed fusion category
$\vect_G^{\omega}$. See Appendix \ref{Apendice} for more details.

The following corollary of Theorem \ref{description of Funct} provides an
\textit{implicit} answer to Problem 10.1 posted by Gelaki in
http://aimpl.org/fusioncat/10/.
\begin{corollary}\label{description of funct entre GT}
Tensor functors between group-theoretical fusion categories are in
correspondence with the arrows of the subcategory of
$\mathfrak{Cor}$ whose objects are of the form $(\C,\M)$, with $\C$
a pointed fusion category.
\end{corollary}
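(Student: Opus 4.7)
The statement is a direct specialization of Theorem \ref{description of Funct} to a full subcategory of $\mathfrak{Funct}$. My plan is to recognize that ``group-theoretical fusion categories'' is exactly the class of objects in $\mathfrak{Funct}$ picked out by restricting the first component to be pointed, and then restrict the equivalence $\mathcal{K}:\mathfrak{Funct}\to\mathfrak{Cor}$ accordingly.

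First I would unpack the definition: a fusion category $\mathcal{E}$ is group-theoretical if and only if there exist a finite group $G$, a cocycle $\omega\in Z^3(G,k^*)$, and an indecomposable $\vect_G^\omega$-module category $\M$ such that $\mathcal{E}\cong_\otimes (\vect_G^\omega)^*_\M$. Equivalently, $\mathcal{E}$ is (up to tensor equivalence) of the form $\C^*_\M$ for an object $(\C,\M)\in\mathfrak{Funct}$ with $\C$ pointed. So the full subcategory $\mathfrak{Funct}_{\text{pt}}\subseteq\mathfrak{Funct}$ of pairs $(\C,\M)$ with $\C$ pointed has as objects exactly (presentations of) all group-theoretical fusion categories, and its arrows from $(\C,\M)$ to $(\D,\N)$ are, by definition of $\mathfrak{Funct}$, equivalence classes of monoidal functors $\C^*_\M \to \D^*_\N$, i.e.\ tensor functors between two group-theoretical fusion categories.

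Next I would apply Theorem \ref{description of Funct}: the contravariant equivalence $\mathcal{K}:\mathfrak{Funct}\to\mathfrak{Cor}$ is the identity on objects, so it restricts to a contravariant equivalence between $\mathfrak{Funct}_{\text{pt}}$ and the full subcategory $\mathfrak{Cor}_{\text{pt}}\subseteq\mathfrak{Cor}$ of objects $(\C,\M)$ with $\C$ pointed. Full subcategories of equivalent categories picked out by a property invariant under the equivalence inherit the equivalence, so this step is formal.

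The only non-trivial point to verify, and the one I would flag as the ``main obstacle,'' is that the arrow-set of $\mathfrak{Funct}_{\text{pt}}$ captures \emph{all} tensor functors between group-theoretical fusion categories, not merely those that come from a preferred presentation. This is handled by invoking the well-known fact (see the discussion after the statement of Theorem \ref{description of Funct} and \cite{ostrik}) that any tensor functor $\mathcal{E}_1\to \mathcal{E}_2$ between group-theoretical categories is, via fixed presentations $\mathcal{E}_i\cong(\vect_{G_i}^{\omega_i})^*_{\M_i}$, identified with an arrow in $\mathfrak{Funct}$ from $(\vect_{G_2}^{\omega_2},\M_2)$ to $(\vect_{G_1}^{\omega_1},\M_1)$. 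Combining the restriction of $\mathcal{K}$ with this identification yields the claimed correspondence with arrows of $\mathfrak{Cor}_{\text{pt}}$, completing the proof.
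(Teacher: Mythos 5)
Your argument is correct and is essentially the paper's own treatment: the corollary is stated as an immediate consequence of Theorem \ref{description of Funct} (the paper gives no separate proof), combined with the fact from \cite{ostrik} that group-theoretical fusion categories are precisely the duals $\C^*_\M$ with $\C$ pointed, so the restriction of $\mathcal{K}$ to the pointed objects gives the correspondence. One bookkeeping remark: with the paper's conventions a tensor functor $\mathcal{E}_1\to\mathcal{E}_2$, $\mathcal{E}_i\cong(\vect_{G_i}^{\omega_i})^*_{\M_i}$, is an arrow of $\mathfrak{Funct}$ from $(\vect_{G_1}^{\omega_1},\M_1)$ to $(\vect_{G_2}^{\omega_2},\M_2)$ (you wrote the reverse direction), but since $\mathcal{K}$ is contravariant this slip does not affect the asserted correspondence with arrows of $\mathfrak{Cor}$.
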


\subsection{Goursat's Lemma and bitransitive bisets}

Let $G_1,$  $G_2$ be groups and $X$ be a $G_1$-$G_2$-biset. We can regard any $G_1$-$G_2$-biset $X$ as a left $G_1 \times
G_2$-set equipped with the action  $(h, k) \cdot x = h \cdot
x \cdot k^{-1}$, for all $x \in X$, $h\in G_1, k \in G_2$.
Reciprocally, any $G_1 \times G_2$-set can be regarded as a
$G_1$-$G_2$-biset. It is easy to see that this  defines an
equivalence between the categories of $G_1$-$G_2$-bisets and left
$G_1\times G_2$-sets.

A $G_1$-$G_2$-biset is called \emph{transitive} if for some (and so
for every) $x\in X$, $G_1\cdot x\cdot G_2=X$. A $G_1$-$G_2$-biset
$X$ is transitive if and only if $X$ is transitive as  $G_1\times
G_2$-set. Thus, isomorphism classes of transitive $G_1$-$G_2$-bisets
are classified by conjugacy classes of subgroups of $G_1\times G_2$.

Next, we recall the description of the subgroups of a direct product
of groups known as Goursat's Lemma. The proof is a simple exercise in group theory, see \cite[Exercise 5, p. 75]{Lang}.

\begin{lemma}
Let $H$ be a subgroup of
$G_1\times G_2$. Define
\begin{align*}
H_1 &=\{a\in G_1 | (a,b)\in   H, \text{ for some } b  \in G_2\}\\
H_2 &=\{b\in G_2 | (a,b)\in H, \text{ for some } a  \in G_1\}\\
H_1^2=\{a\in G_1 | & (a,1)\in H\},\  \ H_2^1=\{b\in G_2 | (1,b)\in
H\}.
\end{align*}
Then $H_1^2\unlhd H_1 $ and $H_2^1\unlhd H_2 $ are normal subgroups. The map $f_H:
H_1/H_1^2\to H_2/H_2^1$ given by $f_H(aH_1^2)=bH_2^1$ is an
isomorphism.

Conversely, every subgroup $H\subset G_1\times G_2$ is
constructed as a fiber product in the following way: let
$H_i^j\unlhd H_i\subset G_i$ be subgroups and $f_H: H_1/H_1^2\to
H_2/H_2^1$ an isomorphism. Then $H=H_1\times_{f_H}
H_2=\{(h_1,h_2)| f_H(aH_1^2)=bH_2^1\}\subset
G_1\times G_2$.
\end{lemma}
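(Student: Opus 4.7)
The plan is to verify each assertion by direct, coordinate-wise computation in $G_1\times G_2$, with no deeper machinery needed. First I would note that $H_1$ and $H_2$ are subgroups because they are the images of $H$ under the projections $\pi_i \colon G_1\times G_2\to G_i$, and $H_1^2$, $H_2^1$ are subgroups since, via the identifications $G_1\times\{1\}\cong G_1$ and $\{1\}\times G_2\cong G_2$, they correspond to $H\cap (G_1\times\{1\})$ and $H\cap (\{1\}\times G_2)$. For normality of $H_1^2$ in $H_1$, pick $a\in H_1$ with a witness $(a,b)\in H$ and $x\in H_1^2$, so $(x,1)\in H$; conjugating gives $(a,b)(x,1)(a,b)^{-1}=(axa^{-1},1)\in H$, hence $axa^{-1}\in H_1^2$. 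Normality of $H_2^1$ in $H_2$ is symmetric.

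Next I would construct $f_H$. Define $f_H(aH_1^2)=bH_2^1$ whenever $(a,b)\in H$. Well-definedness amounts to two checks: if $(a,b),(a,b')\in H$ then $(1,b'b^{-1})=(a,b')(a,b)^{-1}\in H$, so $b'b^{-1}\in H_2^1$; and if $a'=ax$ with $x\in H_1^2$ then $(a',b)=(x,1)(a,b)\in H$, so $f_H$ does not depend on the chosen representative. Multiplicativity is immediate from $(a_1,b_1)(a_2,b_2)=(a_1a_2,b_1b_2)\in H$. Surjectivity holds by the definition of $H_2$. For injectivity, suppose $f_H(aH_1^2)=H_2^1$, i.e.\ $(a,b)\in H$ with $b\in H_2^1$; then $(1,b)\in H$, so $(a,1)=(a,b)(1,b)^{-1}\in H$, giving $a\in H_1^2$.

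For the converse, start with normal subgroups $H_i^j\unlhd H_i\subseteq G_i$ and an isomorphism $f\colon H_1/H_1^2\to H_2/H_2^1$, and set
\begin{equation*}
H \;=\; H_1\times_f H_2 \;=\; \{(a,b)\in H_1\times H_2 \mid f(aH_1^2)=bH_2^1\}.
\end{equation*}
A short check shows $H$ is a subgroup of $G_1\times G_2$, and then applying the forward construction to $H$ recovers the original data: its two projections equal $H_1,H_2$ (using surjectivity of $f$ for the second); its intersections with the coordinate copies yield exactly $H_1^2$ and $H_2^1$ (using injectivity of $f$); and the induced map on quotients is $f$ by construction. Conversely, starting from an arbitrary subgroup $H\subseteq G_1\times G_2$, the fiber product assembled from its associated data coincides with $H$, since both inclusions reduce to the defining condition $f_H(aH_1^2)=bH_2^1$.

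The argument is essentially bookkeeping on cosets, so the main obstacle is just to organize well-definedness, normality, and the recovery of $H$ from its fiber-product data without redundant verification; no nontrivial structural input is required beyond the first isomorphism theorem applied to the projections $\pi_i\colon H\to H_i$ whose kernels are $H_j^i$.
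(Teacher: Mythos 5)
Your proposal is correct; the paper itself gives no argument for this statement (it is Goursat's Lemma, cited as an exercise in Lang), and your direct coset-bookkeeping verification is exactly the standard proof being appealed to. One trivial slip: for well-definedness you write $(a',b)=(x,1)(a,b)$, which equals $(xa,b)$ rather than $(ax,b)$; use $(a,b)(x,1)$ instead (or invoke normality of $H_1^2$ in $H_1$), after which everything goes through as stated.
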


Let $G_1$ and $G_2$ be  groups and $X$ a $G_1$-$G_2$-biset. We will
say that $X$ is a \textit{bitransitive biset} if $X$ is transitive
as both a right $G_2$-set and a left $G_1$-set. A subgroup $H\subset
G_1\times G_2$ is called a \textit{bitransitive subgroup} if
$(G_1\times G_2)/H$ is a bitransitive $G_1$-$G_2$-biset.

Obviously every bitransitive $G_1$-$G_2$-biset is transitive as
$G_1\times G_2$-set but the conversely is not true, e.g., $G_1$ with trivial $G_2$-action and the regular $G_1$ is a transitive $G_1\times G_2$-set but not bitransitive.

Let $X$ be a $G_1$-$G_2$-biset and $x\in X$. We define the left,
right, and bi-stabilizer subgroups of $x$ as $\Stab_r(x)=\{g\in G_2|
xg=x\}$, $\Stab_l(x)=\{g\in G_1| gx=x\}$, and  $\Stab_{bi}(x)=\{(h,k)\in
G_1\times G_2| hxk=x\}$, respectively.  Notice that if $H=\Stab_{bi}(x)$
then, in the previous notation, $\Stab_l(x)=H_1^2$ and $\Stab_r(x)= H_1^2$.
\begin{remark}\label{observacion ordenes de estabilizadores}
If $X$ is a bitransitive $G_1$-$G_2$-set then:
\begin{enumerate}[leftmargin=*]
\item  $|X|=|G_2|/|\Stab_r(x)|=|G_1|/|\Stab_l(x)|= |G_1||G_2|/|\Stab_{bi}(x)|$. In particular, $|\Stab_r(x)|=|\Stab_l(x)|$ when $|G_1|=|G_2|$.
\item If $\Stab_r(x)=1$ (or  $\Stab_l(x)=1$) then $H_i=G_i$. Moreover, there is a unique group
isomorphism $f:G_1\to G_2$ such that $\Stab_{bi}(x)=\{(g,f(g))| g\in
G_1\}$. This kind of bisets are called \emph{bitorsors}.
\end{enumerate}
\end{remark}
Let $X$ be a right transitive $G_1$-$G_2$-biset and $x\in X$. Set
$H=\Stab_r(x)$. We can, and  will, assume that $X=H\backslash G_2$ as a
 right $G_2$-set.  Notice that every $g\in G_1$ defines a map
$\widehat{g}:X\to X, y\mapsto gy$ that is an automorphism of right
$G_2$-sets. The left action is determined by the map
$\widehat{(-)}: G_1\to \Aut_{G_2}(X), g\mapsto \widehat{g}$. Since we
are assuming that $X=H\backslash G_2$ as right $G_2$-set, then
$\Aut_{G_2}(X)\cong \text{N}_{G_2}(H)/H$. Thus, the map
$\widehat{(-)}$ defines, and is defined by, a group morphism
$\pi:G_1\to \text{N}_{G_2}(H)/H$. 
\begin{proposition}
Let $G_1$, $G_2$ be groups and  $X$ be a bitransitive $G_1$-$G_2$-biset. For any $x\in X$, the subgroups
$\Stab_r(x)\unlhd G_2$ and $\Stab_l(x) \unlhd G_1$ are normal and the group homomorphism $\pi$ induces a group isomorphism $\tilde{\pi} :G_1/\Stab_l(x)\to G_2/\Stab_r(x)$.

Conversely, a pair of normal subgroups $N_1 \unlhd G_1, N_2\unlhd
G_2$ and an isomorphism $\pi:G_1/N_1\to G_2/N_2$ define a
bitransitive $G_1$-$G_2$-biset. 

Two triples $(N_1,N_2,\pi)$ and
$(N_1',N_2',\pi')$ define equivalent bitransitive $G_1$-$G_2$-bisets  if and only
if $N_1=N_1', N_2=N_2'$ and there exists $b \in G_2/N_2$ such that
$\pi(a)=b\pi'(a)b^{-1}$, for all $a\in G_1/N_1$.
\end{proposition}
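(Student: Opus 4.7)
My plan is to prove this by first establishing the normality and the isomorphism $\tilde{\pi}$ for a given bitransitive biset $X$, then reversing the construction, and finally checking the equivalence criterion by a direct computation with base points.

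For normality, I would exploit the commutativity of the two actions. If $y=gx$ for $g\in G_1$, then since the left $G_1$- and right $G_2$-actions commute, $\Stab_r(y)=\Stab_r(x)$. On the other hand, by the standard orbit-stabilizer principle for the right action, $\Stab_r(xk)=k^{-1}\Stab_r(x)k$ for all $k\in G_2$. Combining these with right-transitivity (every $y\in X$ has the form $xk$) yields $k^{-1}\Stab_r(x)k=\Stab_r(x)$ for all $k\in G_2$, so $\Stab_r(x)\unlhd G_2$. The proof that $\Stab_l(x)\unlhd G_1$ is symmetric.

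To build the isomorphism $\tilde{\pi}$, for each $g\in G_1$ I use right-transitivity to find $k\in G_2$ with $gx=xk$. Since $k$ is unique modulo $\Stab_r(x)$, this gives a well-defined map $\pi:G_1\to G_2/\Stab_r(x)$. Homomorphy follows from $(gg')x=g(xk')=(gx)k'=xkk'$, using that the actions commute. The kernel is $\{g: gx=x\}=\Stab_l(x)$, and surjectivity follows from left-transitivity: for any $k\in G_2$, the element $xk\in X$ equals $gx$ for some $g\in G_1$. So the induced $\tilde{\pi}:G_1/\Stab_l(x)\to G_2/\Stab_r(x)$ is a group isomorphism. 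For the converse, given $(N_1,N_2,\pi)$, I set $X:=G_2/N_2$ with the standard right action and the left action $g\cdot (hN_2):=\pi(gN_1)hN_2$. The two actions commute since $\pi(gN_1)$ acts by left multiplication and $k$ by right multiplication on the group $G_2$; left-transitivity follows from surjectivity of $\pi$, right-transitivity is obvious, and a direct calculation shows $\Stab_r(N_2)=N_2$ and $\Stab_l(N_2)=N_1$.

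For the equivalence statement, if $\varphi:(G_2/N_2,\pi)\to (G_2/N_2',\pi')$ is a $G_1$-$G_2$-biset isomorphism, comparing stabilizers at $\varphi(N_2)$ forces $N_2=N_2'$ and $N_1=N_1'$. Writing $\varphi(N_2)=bN_2$, right $G_2$-equivariance gives $\varphi(yN_2)=byN_2$ for all $y\in G_2$. Left $G_1$-equivariance applied to the coset $N_2$ yields
\begin{equation*}
b\,\pi(gN_1)\,N_2 \;=\; \varphi\bigl(\pi(gN_1)N_2\bigr) \;=\; \pi'(gN_1)\,bN_2,
\end{equation*}
which is exactly $\pi'(gN_1)=b\,\pi(gN_1)\,b^{-1}$ in $G_2/N_2$; reversing the roles of $\pi$ and $\pi'$ (replacing $b$ by $b^{-1}$) recovers the form stated. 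Conversely, given such $b$, the map $\varphi(yN_2):=byN_2$ is easily checked to define a bimodule isomorphism.

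The main obstacle is mostly bookkeeping: keeping straight which action is on which side and ensuring that the class of $b$ modulo $N_2$, rather than $b$ itself, is what parametrizes the ambiguity. There is no subtle step; the entire argument is a careful unwinding of the definition using that both stabilizers are normal, which collapses $N_{G_2}(H)/H$ to $G_2/N_2$ and makes the induced map $\tilde{\pi}$ a bona fide group isomorphism rather than just a map of pointed sets.
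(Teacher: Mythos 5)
Your proof is correct and, at its core, takes the same route as the paper: the same homomorphism $\pi$ determined by $gx=xk$ with $k$ unique modulo $\Stab_r(x)$, the same model $X=G_2/N_2$ with left action through $\pi$ for the converse, and the same conjugation-by-$b$ criterion for when two triples give isomorphic bisets. Where you genuinely differ is the normality step: the paper first encodes the left action as a group homomorphism $\pi\colon G_1\to N_{G_2}(H)/H$ using the identification $\Aut_{G_2}(H\backslash G_2)\cong N_{G_2}(H)/H$, so that $\Stab_l(x)=\ker\pi$ is normal and normality of $\Stab_r(x)$ follows by the symmetric argument, whereas you prove normality of $\Stab_r(x)$ directly from the commuting actions ($\Stab_r$ is constant on left orbits, while $\Stab_r(xk)=k^{-1}\Stab_r(x)k$); this is more elementary and self-contained, at the cost of not exhibiting up front the description of right-transitive bisets as pairs $(H,\pi)$ that the paper reuses. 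One small label to correct: in that normality argument the transitivity you actually invoke is that of the \emph{left} action (you need $xk=gx$ for some $g\in G_1$, so that $\Stab_r(xk)=\Stab_r(x)$); you cite right-transitivity, but since the biset is bitransitive both hold and the argument goes through unchanged.
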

\begin{proof}
From the previous discussion, a right transitive $G_1$-$G_2$-biset can be identified with a pair
$(H,\pi)$, where $H=\Stab_r(x)$ and $\pi:G_1\to \text{N}_{G_2}(H)/H$ is
a group morphism. Since $\Stab_l(x)=\ker(\pi)$, the subgroup $\Stab_l(x)$ is normal. In an analogous way, $\Stab_r(x)$ is normal by the bitransitivity of $X$. The map $\pi$ induces an injective homomorphism  $\tilde{\pi} :G_1/\Stab_l(x)\to G_2/\Stab_r(x)$. Since $X$ is bitransitive,  $\operatorname{Im}(\pi)$ is a transitive group with respect to $X$. Therefore $\pi$ and  $\tilde{\pi}$ are surjective.

Next, given a pair of normal subgroups $N_1 \unlhd G_1, N_2\unlhd
G_2$ and an isomorphism $\pi:G_1/N_1\to G_2/N_2$, the bitransitive biset associated is $X=G_2/N_2$ as a right $G_2$-set and a left $G_1$-action is given by $g_1\cdot g_2N_2=\pi(g_1)g_2N_2$, for all $g_1\in G_1, g_2\in G_2$. 

If  $(N_1,N_2,\pi)$ and
$(N_1',N_2',\pi')$ define isomorphic bisets, then 
$N_1=N_1', N_2=N_2'$ since they are normal subgroups obtained as the stabilizers of any  $x\in X$. Moreover, since $G_1/N_1\cong G_2/N_2$  as right $G_1$-sets, there exists $b \in G_2/N_2$ such that $\pi(a)=b\pi'(a)b^{-1}$, for all $a\in G_1/N_1$.
\end{proof}

\subsection{Preliminaries on group cohomology}\label{Preliminares de cohomologia}
Let $G$ be a finite group, $X$ a left $G$-set, and $\omega \in
Z^3(G,k^{\times})$ a 3-cocycle on $G$.  Denote by $C^{n}(G,C^m(X,k^{\times}))$
the abelian group of all maps
$$\beta: \underbrace{ G\times \ldots
\times G}_{n \text{-times}} \times\underbrace{ X\times \ldots \times
X }_{m \text{- times}}\to k^{\times}$$ such that
$\beta(\sigma_1\ldots,\sigma_n;x_1,\ldots,x_m)=1$ if some $\sigma_i$ or $x_i$ is
$1$.

Next define $\delta_G:C^{n}(G,C^m(X,k^{\times}))\to
C^{n+1}(G,C^m(X,k^{\times}))$ in the following way:
\begin{align*}
\delta_G(f)(\sigma_1,\ldots,\sigma_n,\sigma_{n+1};x_1, &\ldots,x_m) =f(\sigma_2,\ldots,\sigma_{n+1};x_1,\ldots,x_m)\\
&\times\prod_{i=1}^n
f(\sigma_1,\ldots,\sigma_i\sigma_{i+1},\ldots,\sigma_{n+1};x_1,\ldots,x_m)^{(-1)^{i}}
\\ & \times
f(\sigma_1\ldots,\sigma_n;\sigma_{n+1}x_1,\ldots,\sigma_{n+1}x_m)^{(-1)^{n+1}}.
\end{align*}
In general $C^n(G,k^{\times}):=C^{n}(G,C^0(X,k^{\times}))\subset
C^{n}(G,C^m(X,k^{\times}))$ as constant functions over $X\times \cdots
\times X$.

Given  $f\in C^{n+1}(G,C^1(X,k^{\times}))$, we define $$Z^n_G(X,k^{\times})_f:=\{\alpha\in C^{n}(G,C^1(X,k^{\times}))| \delta_G(\alpha)=f
\}$$ and $B^n_G(X,k^{\times})=\{\delta_G(\beta)|\beta \in
C^{n-1}(G,C^1(X,k^{\times}))  \}$. The elements of $Z^n_G(X,k^{\times})_f$ are called \emph{$f$-twisted $n$-cocycles} and the elements in
$B^n_G(X,k^{\times})$ are called \emph{n-coboundaries}.

The abelian group $B^n_G(X,k^{\times})$ acts on $Z^n_G(X,k^{\times})_f$
by multiplication. The set of orbits  $Z^n_G(X,k^{\times})_f/B^n_G(X,k^{\times})$ is denoted by $H^n_G(X,k^{\times})_f$ and two
$f$-twisted  $n$-cocycles in the same orbit are called \emph{cohomologous}.

The following is an $f$-twisted version of Shapiro's Lemma.
\begin{proposition}\label{shapiro's lemma}
Let $H\subset G$ be a subgroup and consider $X:=G/H$. Given $f\in C^{n+1,1}(X\rtimes G,k^{\times})$, the set
$H^n_G(X,k^{\times})_f$ admits a natural free and transitive  action
by the abelian group $H^n(H,k^{\times})$. Hence, either $H^n_G(X,k^{\times})_f=\emptyset $
or there is a bijection between $H^n_G(X,k^{\times})_f$ and $H^n(H,k^{\times})$. This bijection depends on the choice of
a particular element of $H^n_G(X,k^{\times})_f$.
\end{proposition}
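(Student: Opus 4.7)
The plan is to combine the classical Shapiro isomorphism for the untwisted complex with a standard affine-space argument. First I would handle the untwisted case $f \equiv 1$. The complex $(C^\bullet(G, C^1(X,k^*)), \delta_G)$ is (modulo normalization conventions) the bar complex computing $H^\bullet(G, \operatorname{Map}(X,k^*))$, with $G$ acting on $\operatorname{Map}(X,k^*)$ via its left action on $X = G/H$. Since $\operatorname{Map}(G/H, k^*) \cong \operatorname{CoInd}_H^G(k^*)$, the classical Shapiro's lemma yields a canonical isomorphism
\[
H^n_G(X,k^*)_1 \;\cong\; H^n(H,k^*),
\]
given explicitly by restriction to the base point: $[\alpha] \mapsto [(h_1,\dots,h_n) \mapsto \alpha(h_1,\dots,h_n;\,eH)]$. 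The normalization hypothesis (vanishing when an entry equals $1$) is preserved because $H$ stabilizes $eH$.

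Second, I would exhibit the action of $H^n(H,k^*) \cong H^n_G(X,k^*)_1$ on $H^n_G(X,k^*)_f$. Given $\alpha_0 \in Z^n_G(X,k^*)_f$ and $\beta \in Z^n_G(X,k^*)_1$, the pointwise product $\alpha_0 \cdot \beta$ satisfies
\[
\delta_G(\alpha_0 \cdot \beta) \;=\; \delta_G(\alpha_0)\cdot \delta_G(\beta) \;=\; f \cdot 1 \;=\; f,
\]
so $\alpha_0 \cdot \beta \in Z^n_G(X,k^*)_f$. Since $\delta_G$ is multiplicative, replacing $\alpha_0$ by $\alpha_0 \cdot \delta_G(\gamma)$ or $\beta$ by $\beta \cdot \delta_G(\gamma')$ changes $\alpha_0 \cdot \beta$ by a coboundary, so the rule $([\alpha_0],[\beta]) \mapsto [\alpha_0 \cdot \beta]$ descends to a well-defined action on cohomology.

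Third, assume $Z^n_G(X,k^*)_f \neq \emptyset$ and fix $\alpha_0 \in Z^n_G(X,k^*)_f$. For transitivity, given any $\alpha_1 \in Z^n_G(X,k^*)_f$, the quotient $\alpha_1/\alpha_0$ lies in $Z^n_G(X,k^*)_1$ and $\alpha_1 = \alpha_0 \cdot (\alpha_1/\alpha_0)$. For freeness, if $\alpha_0 \cdot \beta = \alpha_0 \cdot \delta_G(\gamma)$ in $Z^n_G(X,k^*)_f$, then dividing by $\alpha_0$ shows $\beta = \delta_G(\gamma)$, i.e.\ $[\beta] = 0$. Combining this with the Shapiro isomorphism from the first step produces the desired bijection $H^n(H,k^*) \xrightarrow{\sim} H^n_G(X,k^*)_f$, given by $[\tilde\beta] \mapsto [\alpha_0 \cdot \beta]$, which manifestly depends on the chosen basepoint $\alpha_0$.

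The main obstacle I anticipate is purely bookkeeping: matching the paper's normalization convention and sign convention for $\delta_G$ with those used in the standard formulation of Shapiro's lemma, and checking that the evaluation-at-$eH$ map really lands in the normalized $H$-cochains and is a quasi-isomorphism. Once this identification is set up carefully, the twisted statement follows formally from the untwisted one because $Z^n_G(X,k^*)_f$ is an affine space (a torsor) over the additive group $Z^n_G(X,k^*)_1$ whenever it is nonempty.
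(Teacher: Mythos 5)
Your proposal is correct and follows essentially the same route as the paper's proof: identify the untwisted cohomology $H^n_G(X,k^*)$ with $H^n(H,k^*)$ via the classical Shapiro's lemma (the paper writes the coefficients as $\operatorname{Ind}_H^G(k^*)$, which for finite index agrees with your $\operatorname{CoInd}_H^G(k^*)$), and then observe that multiplicativity of $\delta_G$ makes $H^n_G(X,k^*)_f$, when nonempty, a torsor under this group. The extra care you take with normalization and the explicit evaluation-at-basepoint map is just a more detailed version of what the paper leaves implicit.
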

\begin{proof}
Since $H^n_G(X,k^{\times})=H^n(G,\text{Ind}_H^G(k^{\times}))$, it follows from Shapiro's Lemma that $H^n_G(X,k^{\times}) \cong
H^n(H,k^{\times})$.
It is easy to see that $H^n_G(X,k^{\times})$ acts freely and
transitively over $H^n_G(X,k^{\times})_f$ by multiplication. Therefore, $H^n(H,k^{\times})$ acts freely and transitively over
$H^n_G(X,k^{\times})_f$.
\end{proof}
\begin{remark}
If $X$ is a transitive $G$-set, then  $H^n_G(X,k^{\times})_f=\emptyset $ if and only if
$0\neq [f|_{\Stab_x(G)^{\times n}}]\in H^n(\Stab_x(G),k^{\times})$,
for some $x\in X$.
\end{remark}

\subsection{Bimodule categories over pointed fusion categories}

In this subsection, we will follow the notation in Appendix \ref{Apendice}. 

A $\vect_{G_1}^{\omega_1}$-$\vect_{G_2}^{\omega_2}$-bimodule category is, by
definition, a left module category over $$\vect_{G_1}^{\omega_1}\boxtimes
(\vect_{G_2}^{\omega_2})^{\op}= \vect_{G_1\times G_2}^{\omega_1\times
\omega_2^{-1}}.$$ A  explicit definition is the
following:
\begin{definition}
A $\vect_{G_1}^{\omega_1}$-$\vect_{G_2}^{\omega_2}$-bimodule category is
determined by the data $(X,\mu_l,\mu_r,\mu_m)$, where $X$ is a
$G_1$-$G_2$-biset, $(X,\mu_l)$ is a left $\vect_{G_1}^{\omega_1}$-module
category, $(X,\mu_r)$ is a right $\vect_{G_2}^{\omega_2}$-module category
and $\mu_m: G_1\times X\times G_2\to k^{\times}$ is a normalized map such
that
\begin{align}\label{3-cociclo bimodule1}
\mu_r(\sigma x,\rho,\phi)\mu_m(\sigma,x,\rho\phi)&=\mu_m(\sigma,x,\rho)\mu_m(\sigma,x \rho,\phi)\mu_r(x,\rho,\phi),\\
\mu_m(\sigma\tau,x,\phi)\mu_l(\sigma,\tau,x\phi)&=\mu_l(\sigma,\tau,x)\mu_m(\sigma,\tau
x,\phi)\mu_m(\tau,x,\phi),\label{3-cociclo bimodule2}
\end{align}
for all $\sigma, \tau \in G_1, \rho, \phi\in G_2, x\in X$.

We will denote by $\M(X,\mu_l,\mu_r,\mu_m)$ the $\vect_{G_1}^{\omega_1}$-$\vect_{G_2}^{\omega_2}$-bimodule category associated to $(X,\mu_l,\mu_r,\mu_m)$.
\end{definition}

By \cite[Proposition 4.2]{ENO3}, if $\M(X,\mu_l,\mu_r,\mu_m)$ is an
invertible $\vect_{G_1}^{\omega_1}$- $\vect_{G_2}^{\omega_2}$-bimodule category
then $X$ is a bitransitive $G_1$-$G_2$-biset.

Let $X$ be a $G_1$-$G_2$-biset and $\omega_i\in Z^3(G_i,k^{\times})$. By Proposition \ref{shapiro's lemma}, there is a bijective
correspondence between elements in $Z^2_{G_1\times
G_2}(X,k^{\times})_{\omega_1\times \omega_2^{-1}}$ and all
possible triples $(\mu_l,\mu_r,\mu_m)$ such that
$(X,\mu_l,\mu_r,\mu_m)$ is a $\vect_{G_1}^{\omega_1}$-
$\vect_{G_2}^{\omega_2}$-bimodule category.

If $X$ is bitransitive with associated data $(N_1,N_2,f)$, the
bi-stabilizer of $X$ is $G_1\times_{f}G_2=\{ (g_1,g_2)|
f(g_1N_1)=g_2N_2\}$. By Proposition \ref{shapiro's lemma},
when $H^2_{G_1\times G_2}(X,k^{\times})_{\omega_1\times \omega_2^{-1}}\neq
\emptyset$ there is a bijective correspondence between the set of
equivalence classes of bimodule categories with underlying $G_1$-$G_2$-biset $X$
and $H^2(G_1\times_{f}G_2,k^{\times})$.

\subsection{Parametrization of invertible bimodule categories over pointed fusion categories and braided equivalence of twisted Drinfeld doubles.}

\begin{lemma}\label{lema pairing}
Let $G_1$ and $G_2$ be finite groups of the same order and $\omega_i\in Z^3(G_i,k^{\times})$.
Let $\M(X,\mu_l,\mu_r,\mu_m)$ be a
$\vect_{G_1}^{\omega_1}$-$\vect_{G_2}^{\omega_2}$-bimodule category, with $X$ a
bitransitive set.

For a fix $x\in X$, every $\mu = [(\mu_l,\mu_r,\mu_m)] \in H^2_{G_1\times
G_2}(X,k^{\times})_{\omega_1\times \omega_2^{-1}}$ defines a group morphism
\begin{align*}
L_\mu :\Stab_{l}(x) &\to H^1_{G_2}(X,k^{\times})\\
n_1 &\mapsto  \Big[(y,g_2)\mapsto \mu_m(n_1,y,g_2)\Big],
\end{align*}
and induces a pairing \begin{align*}
\mu_m(-,x,-):\Stab_{l}(x) \times \Stab_{r}(x) &\to k^{\times} \\
(n_1,n_2) &\mapsto \mu_m(n_1,x,n_2).
\end{align*}
Moreover, $L_\mu$ is a group isomorphism if and only if $\mu_m(-,x,-)$
is non-degenerate. In particular, $\Stab_{l}(x)$ and $\Stab_{r}(x)$
are normal abelian subgroups when $L_{\mu}$ is an isomorphism.
\end{lemma}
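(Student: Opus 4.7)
The plan is to establish the lemma in four steps: well-definedness of $L_\mu$, its homomorphism property, the identification of $L_\mu$ with the map induced by the pairing via Shapiro's lemma, and the numerical consequence that forces both stabilizers to be abelian.

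First I would fix a representative $(\mu_l,\mu_r,\mu_m)$ and an element $n_1\in\Stab_l(x)$, and check that $f_{n_1}(y,g_2):=\mu_m(n_1,y,g_2)$ descends to a well-defined class in $H^1_{G_2}(X,k^*)$. Rewriting \eqref{3-cociclo bimodule1} at $\sigma=n_1$ yields
\[
\frac{\mu_m(n_1,y,\rho\phi)}{\mu_m(n_1,y,\rho)\,\mu_m(n_1,y\rho,\phi)}=\frac{\mu_r(y,\rho,\phi)}{\mu_r(n_1y,\rho,\phi)}.
\]
Using the module-cocycle identity for $\mu_r$ (which absorbs $\omega_2$), the right-hand side is an honest 2-coboundary on $G_2$, so $f_{n_1}$ represents a 1-cocycle class. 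A parallel computation shows that replacing $(\mu_l,\mu_r,\mu_m)$ by a cohomologous triple changes $f_{n_1}$ only by a 1-coboundary. For the homomorphism property I would specialise \eqref{3-cociclo bimodule2} to $\sigma=n_1,\tau=n_1'$ and evaluate at points where $n_1'$ acts trivially; this eliminates the $\mu_l$-contributions up to coboundary and gives $f_{n_1n_1'}\equiv f_{n_1}\cdot f_{n_1'}$ in $H^1_{G_2}(X,k^*)$.

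Next I would relate $L_\mu$ to the pairing via Shapiro's lemma. The pairing $\mu_m(-,x,-)\colon\Stab_l(x)\times\Stab_r(x)\to k^*$ is literally the restriction of $f_{n_1}$ to $y=x$ and $g_2\in\Stab_r(x)$. Proposition \ref{shapiro's lemma} identifies $H^1_{G_2}(X,k^*)\cong H^1(\Stab_r(x),k^*)=\Hom(\Stab_r(x),k^*)$ precisely through this restriction, so $L_\mu$ becomes the adjoint $n_1\mapsto\mu_m(n_1,x,-)$ of the pairing. In particular, $L_\mu$ is injective if and only if the left kernel of the pairing is trivial, and it is an isomorphism if and only if the pairing is non-degenerate.

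Finally, the abelianness claim would follow from counting. By bitransitivity and $|G_1|=|G_2|$, Remark \ref{observacion ordenes de estabilizadores} gives $|\Stab_l(x)|=|\Stab_r(x)|$. If $L_\mu$ is an isomorphism, then
\[
|\Stab_l(x)|=|\Hom(\Stab_r(x),k^*)|=|\Hom(\Stab_r(x)^{\mathrm{ab}},k^*)|\le|\Stab_r(x)|,
\]
forcing equality and hence $\Stab_r(x)$ abelian; a symmetric argument using $\mu_m(-,x,-)$ on the right factor yields $\Stab_l(x)$ abelian. Normality of both stabilizers was already established for bitransitive bisets. The main obstacle is the twisted cohomological bookkeeping in the first step: one must verify precisely how the 3-cocycles $\omega_1,\omega_2$ interact with the module cocycles $\mu_l,\mu_r$ so that the $\mu_r$-ratio above is a coboundary and $f_{n_1}$ really does lift to $H^1_{G_2}(X,k^*)$. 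Once this twisted Shapiro identification is cleanly in place, the homomorphism property and the order comparison forcing abelianness are routine.
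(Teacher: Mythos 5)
Your outline follows the same route as the paper's proof: use \eqref{3-cociclo bimodule1} to get the cocycle property of $\mu_m(n_1,-,-)$, use \eqref{3-cociclo bimodule2} for multiplicativity of $L_\mu$, identify $H^1_{G_2}(X,k^*)$ with $\Hom(\Stab_r(x),k^*)$ via Shapiro's Lemma so that $L_\mu$ becomes the adjoint $n_1\mapsto \mu_m(n_1,x,-)$ of the pairing, and then use $|\Stab_l(x)|=|\Stab_r(x)|$ from Remark \ref{observacion ordenes de estabilizadores} to get the equivalence with non-degeneracy and the abelianness of the stabilizers by counting characters. The Shapiro step and the counting argument are fine, and indeed more explicit than in the paper.

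There is, however, a genuine gap in your first step. From \eqref{3-cociclo bimodule1} with $\sigma=n_1$ you correctly obtain $\mu_m(n_1,y,\rho\phi)\,\mu_m(n_1,y,\rho)^{-1}\mu_m(n_1,y\rho,\phi)^{-1}=\mu_r(y,\rho,\phi)\,\mu_r(n_1y,\rho,\phi)^{-1}$, but your claim that the right-hand side is a coboundary ``by the module-cocycle identity for $\mu_r$'' does not hold: that identity only shows that $\mu_r(y,-,-)$ and $\mu_r(n_1y,-,-)$ have the same $\omega_2$-twisted defect, so their ratio is an (untwisted) $2$-cocycle in $Z^2_{G_2}(X,k^*)$, not a coboundary. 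Moreover, even if it were a nontrivial coboundary, correcting $f_{n_1}$ by a $1$-cochain would not yield a well-defined class (two such corrections differ by a cocycle), whereas the lemma asks for the class of the explicit function $(y,g_2)\mapsto\mu_m(n_1,y,g_2)$ itself. The missing observation, used implicitly in the paper, is that on a bitransitive biset every $n_1\in\Stab_l(x)$ fixes \emph{every} point of $X$: the two actions commute and the right $G_2$-action is transitive, so $\Stab_l(xg_2)=\Stab_l(x)$ for all $g_2\in G_2$. Hence $n_1y=y$ for all $y$, the ratio above is identically $1$, and $f_{n_1}$ is an honest $1$-cocycle. The same fact is what your second step needs: the ``points where $n_1'$ acts trivially'' are all of $X$, and only then does \eqref{3-cociclo bimodule2} exhibit $f_{n_1n_1'}(f_{n_1}f_{n_1'})^{-1}$ as the exact $G_2$-coboundary of $y\mapsto\mu_l(n_1,n_1',y)$ on the whole of $X$, which is what equality in $H^1_{G_2}(X,k^*)$ requires (an identity at selected points would not suffice). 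With this observation supplied, the rest of your argument goes through and agrees with the paper's proof.
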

\begin{proof}
It follows from equation \eqref{3-cociclo bimodule1} that
$(x,n_2)\mapsto \mu_m(n_1,x,n_2)$ is a 1-cocycle in $Z_{G_2}^1(X,k^{\times})$. By equation
\eqref{3-cociclo bimodule2}, $\mu_m(n_1n_1',-,-)$ is cohomologous to
$\mu_m(n_1,-,-)\times \mu_m(n_1',-,-)$, for all $n_1,n_1'\in \Stab_l(x)$. 

By
Shapiro's Lemma, $L_\mu$ is completely determined by the group
morphism $\Stab_l(x)\to \Hom(\Stab_r(x),k^{\times}), n_1\mapsto  \mu_m(n_1,x,-)$.

From Remark \ref{observacion ordenes de estabilizadores} (1), we have that $|\Stab_{l}(x)| = |\Stab_{r}(x)|$. In
this way, $\mu_m(-,x,-)$ defines a pairing and $L_\mu$ is an
isomorphism if and only if $\mu_m(-,x,-)$ is non-degenerate.
\end{proof}
The next theorem is a generalization of \cite[Corollary 3.6.3]{Davy2} and  \cite[Proposition 5.2]{NR}.
\begin{theorem}\label{Parametrizacion invertible}
Let $G_1$ and $G_2$ be finite groups and  $\omega_i\in
Z^3(G_i,k^{\times})$. Let $\M(X,\mu_l,\mu_r,\mu_m)$ be a
$\vect_{G_1}^{\omega_1}$-$\vect_{G_2}^{\omega_2}$-bimodule category. Then, the
bimodule $\M(X,\mu_l,\mu_r,\mu_m)$ is invertible if and only if
\begin{enumerate}
\item $X$ is bitransitive, and
\item $\mu_m(-,x,-)$ is non-degenerate.
\end{enumerate}
\end{theorem}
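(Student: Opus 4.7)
The plan is to invoke Proposition 4.2 of \cite{ENO3}: the bimodule $\M(X,\mu_l,\mu_r,\mu_m)$ is invertible if and only if the left-multiplication tensor functor
$$L:\vect_{G_1}^{\omega_1}\to \END_{\vect_{G_2}^{\omega_2}}(\M.)$$
is a tensor equivalence. Since $\vect_{G_1}^{\omega_1}$ is pointed, its image under any tensor functor is again pointed, and $L$ is an equivalence iff it is injective on the group of invertibles $\Inv(\vect_{G_1}^{\omega_1})=G_1$ and the two sides have the same rank, i.e.\ $|G_1|=|G_2|$. The bitransitivity assertion in both directions is already recorded in the excerpt after the definition of $\M(X,\mu_l,\mu_r,\mu_m)$: invertibility forces indecomposability on both sides, which translates into transitivity of $X$ both as a left $G_1$-set and as a right $G_2$-set.

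It remains to match injectivity of $L$ on invertibles with non-degeneracy of $\mu_m(-,x,-)$. The crucial step is to unpack when $L(k_n)\cong \id$ as a right $\vect_{G_2}^{\omega_2}$-module functor. As a plain functor $L(k_n)$ is left multiplication by $n$ on $X$, and its right-module structure is encoded by $\mu_m(n,-,-)$. Hence $L(k_n)\cong \id$ iff (i) $n$ fixes every point of $X$ — equivalently $n\in \Stab_l(x)$, since bitransitivity makes $\Stab_l(x)$ normal and thus equal to $\Stab_l(y)$ for all $y$ — and (ii) there exists a $1$-cochain $\eta:X\to k^*$ whose coboundary in $C^1_{G_2}(X,k^*)$ equals $\mu_m(n,-,-)$. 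The obstruction to (ii) is exactly the class $L_\mu(n)\in H^1_{G_2}(X,k^*)$ from Lemma \ref{lema pairing}. Combined with the identification of $H^1_{G_2}(X,k^*)$ with the subgroup of $\Inv(\END_{\vect_{G_2}^{\omega_2}}(\M.))$ consisting of right-module auto-equivalences whose underlying functor is the identity, one sees that $L$ restricts to a group homomorphism $\Stab_l(x)\to H^1_{G_2}(X,k^*)$ coinciding with $L_\mu$.

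For the forward direction, $L$ being a tensor equivalence makes this restriction bijective onto the identified subgroup, so $L_\mu$ is an isomorphism and Lemma \ref{lema pairing} gives non-degeneracy of $\mu_m(-,x,-)$. For the reverse direction, non-degeneracy yields $L_\mu$ an isomorphism by Lemma \ref{lema pairing}, forces $|\Stab_l(x)|=|\Stab_r(x)|$ and, via Remark \ref{observacion ordenes de estabilizadores}, also $|G_1|=|G_2|$; the analysis of the previous paragraph then shows $L$ is injective on invertibles, so $L$ is a tensor equivalence between pointed fusion categories of the same rank, and invertibility of $\M$ follows. The main obstacle is the bookkeeping behind the equivalence $L(k_n)\cong \id \iff L_\mu(n)=0$: one must verify that the right-$\vect_{G_2}^{\omega_2}$-module coherence condition for a candidate natural isomorphism $\eta$ is exactly the coboundary equation that trivializes the class $[\mu_m(n,-,-)]\in H^1_{G_2}(X,k^*)$, and that the target subgroup is correctly identified with $H^1_{G_2}(X,k^*)$.
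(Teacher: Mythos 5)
Your proof is correct and follows essentially the same route as the paper: reduce invertibility via \cite[Proposition 4.2]{ENO3} to the left-multiplication functor being an equivalence, compare the group of invertibles $G_1$ with $\Aut_{\vect_{G_2}^{\omega_2}}(\M(X,\mu_l))$ through the exact sequence \eqref{sucecion Aut}, identify the kernel-level map with $L_\mu$, and conclude with Lemma \ref{lema pairing}; the paper merely packages your kernel computation as a morphism of two short exact sequences with vertical maps $L_\mu$, $L$, $f$. The only cosmetic imprecision is calling $|G_2|$ the ``rank'' of the dual category — the correct invariant for the counting argument is the Frobenius--Perron dimension (equal to $|G_2|$ by \cite[Theorem 2.15]{ENO}), from which pointedness of the dual and essential surjectivity of $L$ follow as you intend.
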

\begin{proof}
If $\M(X,\mu_l,\mu_r,\mu_m)$ is invertible then $\vect_{G_1}^{\omega_1}$
and $\vect_{G_2}^{\omega_2}$ are Morita equivalent. Thus, by
\cite[Theorem 2.15]{ENO}, $|G_1| = |G_2|$. It follows from \cite[Proposition 4.2]{ENO3} that if
$\M(X,\mu_l,\mu_r,\mu_m)$ is an invertible $\vect_{G_1}^{\omega_1}$-
$\vect_{G_2}^{\omega_2}$-bimodule category then $X$ is bitransitive. Again by \cite[Proposition 4.2]{ENO3}, the bimodule
$\M(X,\mu_l,\mu_r,\mu_m)$ is invertible if and only if the group
morphism induced by left multiplication of objects of
$\vect_{G_1}^{\omega_1}$ $$L:G_1\to
\Aut_{\vect_{G_2}^{\omega_2}}(\M(X,\mu_l))$$ is an isomorphism.

Now assume that $X$ is bitransitive with data $(N_1,N_2,f)$. Then without loss of generality we can
suppose that $X=G_2/N_2$ with action $g_1 \overline{a} g_2= f(g_1)
\overline{a g_2}$, for $g_1\in G_1, g_2\in G_2, \overline{a}\in
X$. Considering the exact sequence \eqref{sucecion Aut} in this case, we have that:
\begin{align*}
& \xymatrix{  1 \ar[r]^{}  & N_1 \ar[r]^{} \ar[dd]^{L_\mu} & G_1
\ar[r]^{}\ar[dd]^{L} & G_1/N_1\ar[r]^{}\ar[dd]^{f} & 1 \\  &
& & & \\ 1 \ar[r]^{}  & H^1_{G_2}(X,k^{\times}) \ar[r]_{} &
\Aut_{\vect_{G_2}^{\omega_2}}(\M(X,\mu_l)) \ar[r]_{} &
G_2/N_2\ar[r]^{} & 1.}
\end{align*}
Hence, if $X$ is bitransitive, $\M(X,\mu_l,\mu_r,\mu_m)$ is
invertible if and only if the group morphism $L_\mu:N_1\to
H^1_{G_2}(X,k^{\times})$ is an isomorphism. Thus, by Lemma \ref{lema
pairing}, $\M(X,\mu_l,\mu_r\mu_m)$ is invertible if and only if
$\mu(-,x-)$ is non-degenerate.
\end{proof}
\begin{remark}
Let $\M(X,\mu_l,\mu_r,\mu_m)$ be a
$\vect_{G_1}^{\omega_1}$-$\vect_{G_2}^{\omega_2}$-bimodule category with $X$
bitransitive $G_1$-$G_2$-biset. Let $(N_1,N_2,f)$ be the data associated to the
bitransitive $G_1$-$G_2$-biset $X$. Then, the cohomo\-lo\-gy class of
$\omega_1\times\omega_2^{-1}|_{G_1\times_f G_2}$ is trivial
and we can assume that
$\omega_1\times\omega_2^{-1}|_{G_1\times_f G_2}=1$. There is a canonical correspondence between the set
$H^2_{G_1\times G_2}(X,k^{\times})_{\omega_1\times \omega_2^{-1}}$ and the
set $H^2(G_1\times_f G_2)$. Moreover, an element $\psi\in
Z^2(G_1\times_f G_2, k^{\times})$ defines an invertible
bimodule category if and only if the pairing $\psi(-|-):N_1\times
N_2\to k^{\times}$ is non-degenerate.
\end{remark}

\begin{corollary}
Let $G_1$ and $G_2$ be finite groups and $\omega_i\in Z^3(G_i,k^{\times})$.
Let $\M(X,\mu_l,\mu_r,\mu_m)$ be an invertible
$\vect_{G_1}^{\omega_1}$-$\vect_{G_2}^{\omega_2}$-bimodule category with
$(N_1,N_2,f)$ the data associated to the bitransitve biset $X$. Then
\begin{enumerate}
\item The subgroups $N_i\unlhd G_i$ are normal and abelian.
\item There is a group isomorphism $N_1\cong N_2$.
\item $\M(X,\mu_l)$ and $\M(X,\mu_r)$ are pointed module categories (see Definition \ref{definicion pointed module category}).
\end{enumerate}
\end{corollary}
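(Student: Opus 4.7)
The plan is to obtain all three conclusions as direct consequences of Theorem \ref{Parametrizacion invertible}, Lemma \ref{lema pairing} and the proposition characterizing bitransitive bisets. Since $\M(X,\mu_l,\mu_r,\mu_m)$ is invertible, Theorem \ref{Parametrizacion invertible} guarantees that $X$ is bitransitive and that the pairing $\mu_m(-,x,-):\Stab_l(x)\times \Stab_r(x)\to k^*$ is non-degenerate. By the description of bitransitive bisets in terms of triples $(N_1,N_2,f)$, the left and right stabilizers $N_1=\Stab_l(x)$ and $N_2=\Stab_r(x)$ are automatically normal in $G_1$ and $G_2$ respectively, and the final assertion of Lemma \ref{lema pairing} says that non-degeneracy of $\mu_m(-,x,-)$ forces $N_1$ and $N_2$ to be abelian. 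This settles the first bullet.

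For the second bullet, I would feed the now-established abelianness of $N_1$ and $N_2$ back into the pairing. A non-degenerate bilinear pairing $N_1\times N_2\to k^*$ between finite abelian groups produces injections $N_1\hookrightarrow \widehat{N_2}$ and $N_2\hookrightarrow \widehat{N_1}$, and by Remark \ref{observacion ordenes de estabilizadores} we have $|N_1|=|N_2|$, so both injections are isomorphisms. Since every finite abelian group is (non-canonically) isomorphic to its Pontryagin dual, this yields $N_1\cong N_2$.

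For the third bullet, I would exploit bitransitivity to identify $X\cong G_1/N_1$ as a left $G_1$-set, so that $\M(X,\mu_l)$ becomes the $\vect_{G_1}^{\omega_1}$-module category indexed by the transitive $G_1$-set $G_1/N_1$ with normal abelian stabilizer $N_1$. By the classification of pointed module categories over pointed fusion categories recalled in the Appendix (essentially \cite[Theorem~3.4]{Naidu}), every such module category is pointed, arising from a pair $(N_1,\psi_l)$ where $\psi_l$ is the 2-cocycle obtained by restricting $\mu_l$ to $N_1$ at the basepoint $x$. Applying the symmetric identification $X\cong N_2\backslash G_2$ and the mirror version of the same classification handles $\M(X,\mu_r)$.

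The main technical hurdle is verifying the hypotheses of the pointed classification for the restricted cocycles. Concretely, one must show that $\psi_l(n,n'):=\mu_l(n,n',x)$ defines an $\Ad_{G_1}$-invariant cohomology class in $H^2(N_1,k^*)$, and similarly for $\psi_r$. This should follow from the 2-cocycle identity for $\mu_l$ combined with the compatibility equation \eqref{3-cociclo bimodule2} coupling $\mu_l$ and $\mu_m$, which forces the required $G_1$-invariance of $[\psi_l]$. Once these compatibilities are unwound, the result of the Appendix directly identifies $\M(X,\mu_l)$ and $\M(X,\mu_r)$ with pointed module categories, and no further argument is needed.
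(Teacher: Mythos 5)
Your handling of the first two bullets is correct and is essentially the intended argument: Theorem \ref{Parametrizacion invertible} gives bitransitivity and non-degeneracy of $\mu_m(-,x,-)$, normality of the stabilizers comes from the structure result for bitransitive bisets, abelianness is the final assertion of Lemma \ref{lema pairing}, and the non-degenerate pairing yields $N_1\cong\widehat{N_2}\cong N_2$; your appeal to Remark \ref{observacion ordenes de estabilizadores} for $|N_1|=|N_2|$ is legitimate because invertibility forces $|G_1|=|G_2|$ (as in the proof of Theorem \ref{Parametrizacion invertible}), and in any case the two injections into the dual groups give the equality of orders directly.

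The gap is in the third bullet. A normal abelian stabilizer is \emph{not} by itself sufficient for pointedness: Proposition \ref{prop clasificacion pointed module} (equivalently Naidu's classification) also requires the class $[\mu_l]$ to be invariant under $\Aut_{G_1}(X)$, and this is precisely the step you defer with ``should follow''; your intermediate claim that ``every such module category is pointed'' is false without that invariance. Your route can in fact be completed: equation \eqref{3-cociclo bimodule2} says exactly that for each $\phi\in G_2$ the pullback of $\mu_l$ along the $G_1$-automorphism $x\mapsto x\phi$ differs from $\mu_l$ by $\delta_{G_1}\bigl(\mu_m(-,-,\phi)\bigr)$, and since $N_1\unlhd G_1$ the group $\Aut_{G_1}(X)\cong G_1/N_1$ acts freely on $X$, so right transitivity forces the right translations to exhaust $\Aut_{G_1}(X)$; this gives the required invariance, and \eqref{3-cociclo bimodule1} handles $\mu_r$ symmetrically. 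But note that no cocycle computation is needed at all: by \cite[Proposition 4.2]{ENO3}, invertibility makes the one-sided duals $\END_{\vect_{G_2}^{\omega_2}}(\M(X,\mu_r).)$ and $\END_{\vect_{G_1}^{\omega_1}}(.\M(X,\mu_l))$ tensor equivalent to $\vect_{G_1}^{\omega_1}$ and $(\vect_{G_2}^{\omega_2})^{\op}$ respectively, both of which are pointed, so $\M(X,\mu_l)$ and $\M(X,\mu_r)$ are pointed module categories directly from Definition \ref{definicion pointed module category}.
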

\begin{remark}
Let $G_1$ and $G_2$ be finite groups and
$\omega_i\in Z^3(G_i,k^{\times})$. There is a
bijective correspondence between \textit{braided} tensor equivalences from
$\mathcal{Z}(\vect_{G_1}^{\omega_1})$ to $\mathcal{Z}(\vect_{G_2}^{\omega_2})$
and invertible $\vect_{G_1}^{\omega_1}$-$\vect_{G_2}^{\omega_2}$-bimodule
categories,  \cite[Theorem 1.1]{ENO3}. Then, Theorem \ref{Parametrizacion invertible} gives a
parametrization of the braided tensor equivalences of the category
of representations of twisted Drinfeld modules. This  generalizes  \cite{Davy2}.
\end{remark}

\begin{corollary}
There is a correspondence between elements in $\operatorname{BrPic}(\vect_{G}^{\omega})$ and equivalence classes of quadruples $(A_1,A_2,f,\mu)$, where $A_1$ and $A_2$ are normal abelian isomorphic subgroups of $G$, $f:G/A_1\to G/A_2$ is a group
isomorphism and $\mu \in Z^2_{G\times G}(X,k^{\times})_{\omega\times
\omega^{-1}}$ such that the pairing $\mu_m(-,x,-):A_1\times A_2\to
k^{\times}$ is non-degenerate, where $X$ is the bitransitive $G$-biset associated to $(A_1,A_2,f)$. 

Two quadruples $(A_1,A_2,f,\mu)$ and $(A_1',A_2',f',\mu')$ are equivalent if $A_1=A_1'$, $A_2=A_2'$ and there is $b\in G/A_2$ such that $f(a)=bf'(a)b^{-1}$, for all $a\in G/A_1$ and $[\mu]= [\mu'^b]\in H^2_{G\times G}(X,k^{\times})_{\omega\times
\omega^{-1}}$.
\end{corollary}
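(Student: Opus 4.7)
The plan is to collect together the ingredients already established in the section and read off the classification. By definition an element of $\operatorname{BrPic}(\vect_G^\omega)$ is an equivalence class of invertible $\vect_G^\omega$-bimodule categories, and by the classification of bimodule categories over pointed fusion categories every such bimodule is of the form $\M(X,\mu_l,\mu_r,\mu_m)$ for some $G$-$G$-biset $X$ and some cocycle data $(\mu_l,\mu_r,\mu_m)\in Z^2_{G\times G}(X,k^*)_{\omega\times\omega^{-1}}$. Thus I need to (i) identify when such a bimodule is invertible, (ii) parametrize the underlying biset, and (iii) determine when two such data give equivalent bimodules.

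For step (i), Theorem \ref{Parametrizacion invertible} gives exactly the two required conditions: $X$ must be bitransitive and the pairing $\mu_m(-,x,-)$ must be non-degenerate (which, by Lemma \ref{lema pairing} and the corollary immediately following Theorem \ref{Parametrizacion invertible}, forces the left and right stabilizers $A_1:=\Stab_l(x)$ and $A_2:=\Stab_r(x)$ to be normal abelian subgroups of $G$ that are moreover isomorphic). For step (ii), the analysis of bitransitive bisets (the second proposition of the biset subsection) attaches to $X$ a triple $(A_1,A_2,f)$ consisting of the two normal subgroups together with a group isomorphism $f:G/A_1\to G/A_2$, and tells us that two such triples $(A_1,A_2,f)$ and $(A_1',A_2',f')$ give isomorphic bitransitive $G$-$G$-bisets if and only if $A_1=A_1'$, $A_2=A_2'$, and there exists $b\in G/A_2$ with $f(a)=bf'(a)b^{-1}$ for all $a\in G/A_1$. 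This yields exactly the biset part of the claimed equivalence relation on quadruples.

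For step (iii), once a biset $X$ is fixed, two bimodule structures $\M(X,\mu)$ and $\M(X,\mu')$ are equivalent as bimodule categories precisely when $[\mu]=[\mu']$ in $H^2_{G\times G}(X,k^*)_{\omega\times\omega^{-1}}$, by the $f$-twisted Shapiro-type statement (Proposition \ref{shapiro's lemma}) applied to $G\times G$ acting on $X$. When in addition one allows the biset to be reparametrized by an element $b\in G/A_2$ (the only remaining ambiguity, by (ii)), the induced action of $b$ on $X$ transports the cocycle $\mu'$ to $\mu'^{b}$, and equivalence of bimodules then becomes $[\mu]=[\mu'^{b}]$ in $H^2_{G\times G}(X,k^*)_{\omega\times\omega^{-1}}$. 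Combining (i)--(iii) and recalling from Lemma \ref{lema pairing} that the non-degeneracy of $\mu_m(-,x,-)$ is intrinsic to the cohomology class gives the stated bijection.

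The step I expect to require the most care is (iii), namely checking that the action of $b\in G/A_2$ on the representative of the biset induces exactly the twist $\mu\mapsto \mu^{b}$ on the cocycle and that this is compatible with the coboundary action defining $H^2_{G\times G}(X,k^*)_{\omega\times\omega^{-1}}$. All the other ingredients are direct applications of results already stated in the section, so the proof reduces to assembling them and verifying this compatibility of the two actions (biset reparametrization and cocycle equivalence).
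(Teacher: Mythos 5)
Your assembly is exactly the intended argument: the paper gives this corollary with no written proof (just a \qed), treating it as an immediate combination of Theorem \ref{Parametrizacion invertible} (invertibility $\Leftrightarrow$ bitransitivity plus non-degeneracy of $\mu_m(-,x,-)$), the classification of bitransitive $G$-$G$-bisets by triples $(A_1,A_2,f)$ up to conjugation by $b\in G/A_2$, and the cohomological parametrization of bimodule structures on a fixed biset by $H^2_{G\times G}(X,k^*)_{\omega\times\omega^{-1}}$. Your sketch spells out precisely this combination (including the compatibility of the $b$-reparametrization with the twist $\mu\mapsto\mu^b$), so it is correct and follows essentially the same route as the paper.
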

\subsection{Tensor product of module categories over pointed fusion categories}

\subsubsection{Equivariantization of semisimple  categories}\label{seccion equivariantizacion semisimple}

Let $\M(X,\alpha)$ be a left $\vect_G$-module category, see Appendix \ref{Apendice}. Let $k^X$ be
the algebra of functions from $X$ to $k$. We will denote by
$\{e_x\}_{x\in X}$ the basis of $k^X$ formed by the orthogonal
primitive idempotents. We further define the $G$-crossed product algebra $ G\#_\alpha k^X$, with
basis given by $\{ g\#e_x \}_{x\in X, g\in G}$ and multiplication
$$(g\#e_s)  (h\#e_t )=  gh\# \delta_{s,ht}\alpha(g,h;t)e_t.$$
The category of right $G\#_\alpha
k^X$-modules is exactly the category $\M(X,\alpha)^{G}$ of
$G$-equivariant objects. In fact, if $V$ is a  $G\#_{\alpha}
k^X$-module then $V=\oplus_{x\in X}V_x$, where $V_x=\{v(1\# e_x)|v\in
V \}$, and the linear isomorphisms
\begin{align*}
f(\sigma,x): V_{\sigma x} &\to V_{x}\\
v &\mapsto v(\sigma\# e_x),
\end{align*}
satisfy $$f(\tau,x)\circ f(\sigma,\tau
x)=\alpha(\sigma,\tau;x)f(\sigma\tau,x),$$ for all $\sigma,\tau\in
G, x\in X$. Thus, $(V,f_\sigma:=\oplus_{x\in
X}f(\sigma,x):\sigma\otimes V\to V)_{\sigma\in G}$ is an object in
$\M(X,\alpha)^{G}$.

Given $x\in X$, we will denote by $\mathcal{O}(x)$ the orbit of $x$ in $X$. If $\{x_1,\ldots,x_m\}$ is a set of representatives of the orbits, we have
$$ G\#_\alpha k^X=\bigoplus_{i=1}^n G\#_\alpha k^{\mathcal{O}(x_i)},$$
and $G\#_\alpha k^{\mathcal{O}(x_i)}$ are mutually orthogonal
bilateral ideals.

In order to describe the simple objects in $\M(X,\alpha)^{G} = \Rep (G\#_{\alpha} k^X)$, we 
assume that $X=\mathcal{O}(x)$, for some $x\in X$.  The map
$\alpha_x:=\alpha(-,-,x):\Stab(x)\times \Stab(x)\to k^{\times}$ is a
2-cocycle. By the Clifford
theory for crossed products (see \cite{Karpy}), there is an equivalence between
the category of right modules over the twisted group algebra
$k_{\alpha_x}[ \Stab(x)]$ and the category of right modules over
$G\#_\alpha k^X$. If $U$ is a right $k_{\alpha_x}[ \Stab(x)]$-module,
the action $ u (e_y\# \sigma)= \delta_{y,x} u\cdot \sigma$ defines a right
$\Stab(x)\#_\alpha k^X$-module structure on $U$ and $\Ind_{\Stab(x)\#_\alpha
k^X}^{G\#_\alpha k^X}(U)$ is the associated  $G\#_\alpha
k^X$-module. Conversely, if $V$ is a $G\#_\alpha k^X$-module then
$V_x$ is a $k_{\alpha_x}[ \Stab(x)]$-module with action given by
$(v_x)g =(v_x)g\#e_x$, for all $v_x\in V_x, g\in \Stab(x)$.

\subsubsection{Tensor product of module categories over pointed fusion categories as an equivariantization}

Let $\C$ be a fusion category and $\M$ a  $\C$-bimodule category.
The following definition was given in \cite{GNN}. The {\em center}
of $\M$ is the category $\Z_\C(\M)$ where objects are pairs $(M,\,
\gamma)$, with $M$ an object of $\M$ and
\begin{equation}
\label{gamma} \gamma = \{ \gamma_{X} : X\ot M \xrightarrow{\sim}  M
\ot X \}_{ X \in \C}
\end{equation}
a natural family of isomorphisms making the following diagram
commutative:
\begin{equation}
\label{central object} \xymatrix{ & X \ot (M \ot Y)
\ar[rr]^{\alpha_{X,M,Y}^{-1}} & &
(X \ot M) \ot Y  \ar[dr]^{\gamma_{X}} & \\
X\ot (Y\ot M) \ar[ur]^{\gamma_{Y}}
\ar[dr]_{\alpha_{X,Y,M}^{-1}} & &  &  &(M \ot X) \ot Y, \\
& (X\ot Y)\ot M \ar[rr]_{\gamma_{X\ot Y}} & & M\ot (X\ot Y)
\ar[ur]_{\alpha_{M,X,Y}^{-1}} & }
\end{equation}
where $\alpha$ denotes the corresponding associativity constraint in
$\M$.

The family of natural isomorphisms \eqref{gamma} is called a {\em
central structure} of an object $ M \in \Z_\C(\M)$.

If $\M(X,\mu_l,\mu_m,\mu_r)$ is a $\vect_{G}^{\omega}$-bimodule category
then a central structure on $V=\oplus_{x\in X}V_x$ is given by a
family of linear isomorphism $\gamma(g,x):V_{gx}\to V_{xg}$ such
that
$$\mu_{m}(\sigma,x,\tau)\gamma(\sigma\tau,x)=\mu_l(\sigma,\tau;x)\mu_r(x;\sigma,\tau)\gamma(\sigma,x)\circ
\gamma(\tau,x),$$ for all $\sigma,\tau \in G, x\in X$.

Since $\M(X,\mu_l,\mu_m,\mu_r)$ is a left $\vect_{G\times G}^{\omega\times \omega^{-1}}$-module category and the diagonal inclusion
$\Delta:\vect_G\to \vect_{G\times G}^{\omega\times \omega^{-1}}$ is a
strict tensor functor, then $\M(X,\mu_l,\mu_m,\mu_r)$ is a left
$\vect_G$-module category with action $g\cdot x:= gxg^{-1}$ and
2-cocycle
$$\alpha(a,b;x):=\frac{\mu_m(a,bx,b^{-1})\mu_l(a,b;x)}{\mu_r(abx;b^{-1},a^{-1})},$$
for all $a,b\in G, x\in X$.
\begin{proposition}\label{center of bimodule over pointed}
The equivariantization $\M(X,\mu_l,\mu_m,\mu_r)^G$  is canonically
 equivalent to the center $\Z_{\vect_{G}^{\omega}}(\M(X,\mu_l,\mu_m,\mu_r))$.
\end{proposition}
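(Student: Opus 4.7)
My plan is to construct a canonical equivalence $\Phi\colon \Z_{\vect_G^\omega}(\M(X,\mu_l,\mu_m,\mu_r))\to \M(X,\mu_l,\mu_m,\mu_r)^G$ that is the identity on underlying objects of $\M$ and transports the central structure into a $G$-equivariant structure by a reindexing. The key observation is that, under the diagonal $\vect_G$-action (which conjugates the $X$-grading by $g\cdot x=gxg^{-1}$), the $y$-component of $\sigma\otimes V$ is $V_{\sigma^{-1}y\sigma}$. Given a central structure $\gamma(g,x)\colon V_{gx}\to V_{xg}$, the substitution $g=\sigma^{-1}$, $x=y\sigma$ produces an isomorphism $\gamma(\sigma^{-1},y\sigma)\colon V_{\sigma^{-1}y\sigma}\to V_y$, and this (up to an explicit scalar factor built from $\mu_l,\mu_m,\mu_r$ that is fixed by the compatibility check below) will be the component $f(\sigma,y)$ of the equivariant structure associated to $(V,\gamma)$.

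On morphisms, $\Phi$ is the identity: a morphism $u\colon V\to W$ in $\M$ commutes with all $\gamma_\sigma$ if and only if, after the bijective reindexing, it commutes with all $f_\sigma$, so $\Phi$ is automatically fully faithful. An inverse functor $\Psi$ is obtained from the inverse reindexing, with $\gamma(g,x)$ recovered from $f(g^{-1},xg)^{-1}$ up to the inverse scalar factor; since the reindexing and the scalar correction are mutually invertible, $\Psi\circ\Phi=\mathrm{id}$ and $\Phi\circ\Psi=\mathrm{id}$ hold on the nose, so essential surjectivity of $\Phi$ is immediate.

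The technical heart of the proof, and the main obstacle, is to verify that the central-structure hexagon
$$\mu_m(\sigma,x,\tau)\,\gamma(\sigma\tau,x)=\mu_l(\sigma,\tau;x)\,\mu_r(x;\sigma,\tau)\,\gamma(\sigma,x)\circ\gamma(\tau,x)$$
transports under the reindexing exactly to the $G$-equivariant $2$-cocycle condition for $f$ twisted by the action cocycle
$$\alpha(\sigma,\tau;y)=\frac{\mu_m(\sigma,\tau y,\tau^{-1})\,\mu_l(\sigma,\tau;y)}{\mu_r(\sigma\tau y;\tau^{-1},\sigma^{-1})}.$$
This reduces to a symbolic identity among $\mu_l,\mu_m,\mu_r$ that follows by repeated use of the bimodule-category axioms \eqref{3-cociclo bimodule1} and \eqref{3-cociclo bimodule2}. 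The scalar correction in the definition of $f(\sigma,y)$ is essentially forced by requiring this cancellation; once it is determined, the remaining checks are routine bookkeeping.
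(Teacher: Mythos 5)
Your construction is essentially the paper's own proof: the equivalence is the identity on underlying $X$-graded objects, and a central structure $\gamma(g,x)\colon V_{gx}\to V_{xg}$ is turned into a $G$-equivariant structure (and back) by a reindexing together with a scalar correction coming from the module constraints. The paper just makes that correction explicit, defining $f(g,x)=\omega(gx,g^{-1},g)\,\gamma(g,x)\otimes g^{-1}$ via the composition $g\otimes V_x=(g\otimes V_x)\otimes(g^{-1}\otimes g)\to V_{gxg^{-1}}\otimes g\to V_x\otimes g=V_{xg}$, and, like you, it leaves the transport of the hexagon into the twisted equivariance condition as a routine verification against \eqref{3-cociclo bimodule1} and \eqref{3-cociclo bimodule2}.
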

\begin{proof}
If $V=\oplus_{x\in G}V_x$ with $\{f(g,x):V_{gxg^{-1}}\to
V_x\}_{x\in X, g \in G}$ is an object in
$\M(\mu_l,\mu_m,\mu_r)^G$ then a central structure on $V$ is
defined by the composition \begin{align*} g\otimes V_{x}=(g\otimes
V_x)\otimes(g^{-1}\otimes g) \xrightarrow{\omega(gx,g^{-1},g)^{-1}}
V_{gxg^{-1}}\otimes g\xrightarrow{f(g,x)\otimes g} V_x\otimes
g=V_{xg}.
\end{align*}
Conversely, if $\gamma(g,x):V_{gx}\to V_{xg}$ is a central structure
on $V$ then the map $f(g,x)=\omega(gx,g^{-1},g)\gamma(g,x)\otimes
g^{-1}$ defines a $G$-equivariant structure on $V$.
\end{proof}
The next result describes the tensor product of bimodule categories in terms of  equivarizations of categories. 
\begin{theorem}\label{tensor product bimodule pointed}
Let $\M(X,\mu^X)$ be a right $\vect_{G}^{\omega}$-module category and
$\M(Y,\mu^Y)$ be a left $\vect_{G}^{\omega}$-module category, then
$\M(X\times Y,\mu^Y,1,\mu^X)$ is a $\vect_{G}^{\omega}$-bimodule category
and $$\M(X\times
Y,\mu^Y,1,\mu^X)^G=\M(X,\mu^X)\boxtimes_{\vect_{G}^{\omega}}
\M(Y,\mu^Y),$$ with a $\vect_{G}^{\omega}$-balanced bifunctor
$$\Ind_{k^{X\times Y}}^{G\# k^{X\times Y}}(-):\M(X,\mu^X)\boxtimes
\M(Y,\mu^Y)\to \M(X\times Y,\mu^Y,1,\mu^X)^G$$
\end{theorem}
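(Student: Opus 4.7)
The plan has three stages: (i) verify that the data $(X\times Y,\mu^Y,1,\mu^X)$ is a bimodule category; (ii) use Proposition \ref{center of bimodule over pointed} to replace the equivariantization on the left by the center $\Z_{\vect_G^\omega}(\M(X\times Y,\mu^Y,1,\mu^X))$; (iii) exhibit the balanced bifunctor and check the universal property defining $\boxtimes_{\vect_G^\omega}$.

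For (i), the crucial simplification is that the middle cocycle is the constant $1$. With this choice, equation \eqref{3-cociclo bimodule1} collapses to the statement that the map $(x,y,\rho,\phi)\mapsto \mu^X(x,\rho,\phi)$ is a right $\vect_G^\omega$-module associator on $X\times Y$ (it depends only on $x$), and equation \eqref{3-cociclo bimodule2} collapses to the analogous statement for $\mu^Y$ on the $Y$-coordinate. Hence the two cocycle conditions reduce exactly to the hypotheses that $\mu^X$ and $\mu^Y$ already satisfy. The verification is a direct unwinding, no new information is used.

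For (ii) and (iii), I would use the universal property of $\boxtimes_{\vect_G^\omega}$: it is the target of the initial $\vect_G^\omega$-balanced bifunctor out of $\M(X,\mu^X)\times \M(Y,\mu^Y)$. First, I would construct the balanced bifunctor. Since $\M(X,\mu^X)\boxtimes_k\M(Y,\mu^Y)$ is canonically the category of $k^{X\times Y}$-modules, and by the description of equivariantizations in Subsection \ref{seccion equivariantizacion semisimple} the category $\M(X\times Y,\mu^Y,1,\mu^X)^G$ is the category of right $G\#_{(\mu^Y,1,\mu^X)}k^{X\times Y}$-modules, the induction functor
\[\Ind_{k^{X\times Y}}^{G\# k^{X\times Y}}:\M(X,\mu^X)\boxtimes_k \M(Y,\mu^Y)\to \M(X\times Y,\mu^Y,1,\mu^X)^G\]
is well defined. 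The balancing natural isomorphism
\[b_{M,g,N}:\Ind(M\otimes\delta_g,N)\xrightarrow{\sim}\Ind(M,\delta_g\otimes N)\]
is constructed from the element $g\in G$ inside the crossed product algebra; its coherence follows from the definition of the product in $G\#k^{X\times Y}$ together with the condition $\mu_m=1$, which is precisely what allows to commute $g$ from the $X$-side to the $Y$-side without introducing a scalar.

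The main obstacle is the universal property. I would verify it by the following reduction: any $\vect_G^\omega$-balanced bifunctor $F:\M(X,\mu^X)\times\M(Y,\mu^Y)\to \A$ extends, by the universal property of the Deligne product, to a functor $\widetilde F:\M(X\times Y)\to \A$; the balancing natural isomorphisms $b$ then equip $\widetilde F$ with a $G$-equivariant structure with respect to the $G$-action on $\M(X\times Y,\mu^Y,1,\mu^X)$ whose existence was identified in the discussion preceding Proposition \ref{center of bimodule over pointed}. Since, by Clifford theory for the crossed product $G\# k^{X\times Y}$ (Subsection \ref{seccion equivariantizacion semisimple}), the induction functor $\Ind$ is the left adjoint to the forgetful functor from $G$-equivariant objects to $\M(X\times Y)$, the functor $\widetilde F$ factors uniquely through $\Ind$ up to a natural isomorphism. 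The coherence checks boil down to the cocycle conditions that $(X\times Y,\mu^Y,1,\mu^X)$ satisfies, and in particular to the vanishing of the middle cocycle, which guarantees compatibility of the balancing with the $G$-equivariant structure. Uniqueness of the factorization follows from the fact that $\Ind$ is essentially surjective onto the $G$-equivariant category, so two extensions that agree on the image must be naturally isomorphic as equivariant functors.
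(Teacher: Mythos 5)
Your stages (i) and (ii) are fine, and (ii) is exactly the paper's route: the paper reduces the statement to Proposition \ref{center of bimodule over pointed}. Where you diverge is stage (iii). The paper does not verify the universal property by hand; it invokes \cite[Proposition 3.8]{ENO3}, which says that for a right $\C$-module category $\M$ and a left $\C$-module category $\N$ the relative product $\M\boxtimes_\C\N$ is canonically the center $\Z_\C(\M\boxtimes\N)$ of the external product bimodule category, and this together with Proposition \ref{center of bimodule over pointed} immediately yields $\M(X\times Y,\mu^Y,1,\mu^X)^G\cong \M(X,\mu^X)\boxtimes_{\vect_{G}^{\omega}}\M(Y,\mu^Y)$. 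So in stage (iii) you are in effect re-proving that proposition in the pointed case, and this is where your argument has a genuine gap.

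The gap is the factorization step. From a balanced $F$ you pass to $\widetilde F$ on $\M(X\times Y)$ equipped with a $G$-invariant structure (isomorphisms $\eta_g:\widetilde F\circ g_*\Rightarrow\widetilde F$ coherent with respect to the $\alpha$-twisted diagonal action), and then assert that $\widetilde F$ factors essentially uniquely through $\Ind:\M(X\times Y)\to\M(X\times Y)^G$ because $\Ind$ is adjoint to the forgetful functor. Adjointness does not give this: the statement that every $G$-invariant functor factors through $\Ind$, i.e.\ that $(\M(X\times Y)^G,\Ind)$ is the $2$-categorical quotient of $\M(X\times Y)$ by the action, is precisely the nontrivial content you need, and your essential-surjectivity remark only addresses uniqueness of a factorization, never its existence. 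Existence does hold in characteristic zero, but it requires an argument: for $(V,f)\in\M(X\times Y)^G$ the composites $\widetilde F(f_g)\circ\eta_{g,V}^{-1}$ define a genuine $G$-action on $\widetilde F(V)$ (the coherence data, i.e.\ the $\alpha$-twists, cancel between $f$ and $\eta$), and one sets $\bar F(V,f)$ to be the image of the averaging idempotent $\frac{1}{|G|}\sum_{g}\widetilde F(f_g)\circ\eta_{g,V}^{-1}$, then checks $\bar F\circ\Ind\cong\widetilde F$ using $\Ind(W)\cong\bigoplus_g g_*W$. In addition, the claim that the balancing coherence (which involves $\omega$, $\mu^X$, $\mu^Y$) translates into the coherence of an invariant structure for the $\alpha$-twisted action is asserted rather than checked; this is exactly where $\mu_m=1$ and the triviality of $\omega\times\omega^{-1}$ on the diagonal must be used. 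Either supply these verifications, or quote \cite[Proposition 3.8]{ENO3} as the paper does.
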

\begin{proof}
This follows from Proposition \ref{center of bimodule over pointed} and
\cite[Proposition 3.8]{ENO3}.
\end{proof}

The following corollary gives an alternative description of the
simple objects of $$\Fun_{\vect_{G}^{\omega}}(\M(X,\mu^X),\M(Y,\mu^Y))$$ given by Ostrik in \cite[Proposition 3.1]{ostrik}.
\begin{corollary}\label{corolario alternativo teo de ostrik}
Let $\M(X,\mu_X)$ and  $\M(Y,\mu_Y)$ be left $\vect_{G}^{\omega}$-module
categories. Let $\{(x_i, y_i)\}_{i=1}^n$ be a set of representatives
of the orbits of $G$ in $X\times Y$.

There is a bijective correspondence between simple objects of
$$\Fun_{\vect_{G}^{\omega}}(\M(X,\mu_X),\M(Y,\mu_Y))$$ and irreducible
representations of $k_{\alpha_{(x_i, y_i)}}[\Stab_G((x_i, y_i))]$, where
$$\alpha_{(x_i,y_i)}(g,h)=\mu_X(g,h;x_i)\mu_Y(g,h;y_i),$$ for all
$g,h\in \Stab_G((x_i, y_i))$.
\end{corollary}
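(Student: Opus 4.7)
The plan is to identify the functor category with a tensor product of module categories and then invoke the general description of simple objects in an equivariantization given in Subsection \ref{seccion equivariantizacion semisimple}.

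First, I would use the standard equivalence of categories
\[
\Fun_{\vect_G^\omega}(\M(X,\mu_X),\M(Y,\mu_Y))\;\cong\;\M(X,\mu_X)^{\operatorname{op}}\boxtimes_{\vect_G^\omega}\M(Y,\mu_Y),
\]
where $\M(X,\mu_X)^{\operatorname{op}}$ is viewed as a right $\vect_G^\omega$-module category. Under the identification in the Appendix, this opposite carries an induced 2-cocycle which I will denote $\mu_X^{\operatorname{op}}$; it is obtained from $\mu_X$ by the rigid duality of $\vect_G^\omega$ and the recipe for the opposite of a module category recalled in Section \ref{preliminares}. Since we only care about the simple objects, we are free to work up to equivalence of module categories.

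Second, I would apply Theorem \ref{tensor product bimodule pointed} to rewrite the right-hand side as an equivariantization:
\[
\M(X,\mu_X)^{\operatorname{op}}\boxtimes_{\vect_G^\omega}\M(Y,\mu_Y)\;\cong\;\M(X\times Y,\mu_Y,1,\mu_X^{\operatorname{op}})^G,
\]
where the underlying $G$-action on $X\times Y$ is the diagonal one (coming from the bimodule-to-center passage via Proposition \ref{center of bimodule over pointed}).

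Third, I would invoke the Clifford-theoretic description of the equivariantization given in Subsection \ref{seccion equivariantizacion semisimple}: the category $\M(X\times Y,\alpha)^G$ is equivalent to $\Rep(G\#_\alpha k^{X\times Y})$, and its simple objects are parametrised by pairs $((x_i,y_i),V)$, where $(x_i,y_i)$ runs over a set of representatives of the $G$-orbits in $X\times Y$ and $V$ is an irreducible module over the twisted group algebra $k_{\alpha_{(x_i,y_i)}}[\Stab_G((x_i,y_i))]$, with $\alpha_{(x_i,y_i)}=\alpha(-,-;(x_i,y_i))$.

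Finally, I would compute the 2-cocycle $\alpha_{(x,y)}$ explicitly by substituting the bimodule data $(\mu_l,\mu_m,\mu_r)=(\mu_Y,1,\mu_X^{\operatorname{op}})$ into the formula
\[
\alpha(a,b;(x,y))=\frac{\mu_m(a,b\cdot(x,y),b^{-1})\,\mu_l(a,b;(x,y))}{\mu_r(ab\cdot(x,y);b^{-1},a^{-1})}
\]
displayed just before Proposition \ref{center of bimodule over pointed}, and restricting to $a,b\in \Stab_G((x,y))$. The main obstacle I expect is precisely this bookkeeping: keeping track of the cocycle $\mu_X^{\operatorname{op}}$ coming from passing to the opposite right module, and checking that on the stabilizer the expression collapses, modulo a coboundary, to
\[
\alpha_{(x,y)}(g,h)=\mu_X(g,h;x)\mu_Y(g,h;y),
\]
as stated. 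All the preceding steps are formal consequences of results already in the paper; this final cocycle identification is where the care is required.
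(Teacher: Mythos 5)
Your proposal is correct and follows essentially the same route as the paper: pass to $\M(X,\mu_X)^{\operatorname{op}}\boxtimes_{\vect_G^\omega}\M(Y,\mu_Y)$ via \cite[Proposition 3.2]{ENO3}, rewrite it as an equivariantization using Theorem \ref{tensor product bimodule pointed}, and apply the Clifford-theoretic description of Subsection \ref{seccion equivariantizacion semisimple}, with the explicit cocycle $\mu_X^{\operatorname{op}}(x;g,h)=\mu_X(h^{-1},g^{-1};x)^{-1}$ feeding into the formula preceding Proposition \ref{center of bimodule over pointed}. The only cosmetic difference is that no coboundary adjustment is needed at the end: the computed cocycle is $\mu_X(g,h;ghx)\mu_Y(g,h;y)$, which for $g,h\in\Stab_G((x,y))$ equals $\mu_X(g,h;x)\mu_Y(g,h;y)$ on the nose.
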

\begin{proof}
By \cite[Proposition 3.2]{ENO3} we have that
\begin{align*}
\Fun_{\vect_{G}^{\omega}}(\M(X,\mu_X),\M(Y,\mu_Y)) &\cong \M(X,\mu_X)^{\op}\boxtimes_{\vect_{G}^{\omega}}\M(Y,\mu_Y)\\
 &\cong \big(\M(X^{\op},\mu_X^{\op})\boxtimes \M(Y,\mu_Y) \big)^G,
\end{align*} where $X^{\op}$ denotes the right $G$-set $X$ with action $xg:=g^{-1}x$
and $\mu_{X}^{\op}(x,g,h):=\mu_X(h^{-1},g^{-1};x)^{-1}$, for all
$g,h\in G, x\in X$.

By Proposition \ref{tensor product bimodule pointed}, the simple
objects over $\Fun_{\vect_{G}^{\omega}}(\M(X,\mu_X),\M(Y,\mu_Y))$  are in
correspondence with simple modules over $G\#_\alpha k^{X\times Y}$,
where $G$  acts on $X\times Y$ by $g(x,y)=(gx,gy)$ and the
2-cocycle is given by
$$\alpha(g,h;(x,y))=\mu_X(g,h,ghx)\mu_Y(g,h,y),$$ for all $g,h\in G,
x,\in X,y\in Y$.

It follows from the discussion in Subsection \ref{seccion
equivariantizacion semisimple} that if $\{(x_i, y_i)\}_{i=1}^n$ is a
set of representatives of the orbits of $G$ in $X\times Y$ there is
a bijective correspondence between simple objects of
$\Fun_{\vect_{G}^{\omega}}(\M(X,\mu_X),\M(Y,\mu_Y))$ and simple modules over $k_{\alpha_{(x_i,y_i)}}[\Stab((x_i,y_i))]$, for all $i\in \{1,\ldots,
n\}$.
\end{proof}
\begin{remark}
\begin{enumerate}[leftmargin=*]
\item Suppose that $X$ and $Y$ are transitive $G$-sets
with $X=G/H_1$ and $Y=G/H_2$. Then $G$-orbits of $X\times
Y$ are in correspondence with $(H_1,H_2)$-double cosets. If
$\{g_i\}_{i=1}^n$  is a set of representatives of the double cosets,
the associated stabilizer is $H_1\cap g_iH_2g_i^{-1}$. 
Ostrik's classification of  simple objects of
$\Fun_{\vect_{G}^{\omega}}(\M(X,\mu_X),\M(Y,\mu_Y))$ can be recovered in
this way, \cite[Proposition 3.2]{ostrik}.
\item Theorem \ref{tensor product bimodule pointed}
also gives a description of the tensor product of bimodule
categories over pointed fusion categories. 

If
$\M(X,\mu_l^X,\mu_m^X,\mu_r^X)$ is a
$\vect_{G_1}^{\omega_1}$-$\vect_{G_2}^{\omega_2}$-bimodule category and
$\M(Y,\mu_l^Y,\mu_m^Y,\mu_r^Y)$ is a
$\vect_{G_2}^{\omega_2}$-$\vect_{G_3}^{\omega_3}$-bimodule category then 
$\M(X,\mu_l^X,\mu_m^X,\mu_r^X)\boxtimes_{\vect_{G_2}^{\omega_2}}
\M(Y,\mu_l^Y,\mu_m^Y,\mu_r^Y) \cong \Rep(G_2\# k^{X\times Y})$ . The set $\text{Spec}(G_2\# k^{X\times
Y})$ of isomorphism classes of simple modules is the
$G_1$-$G_3$-biset associated to the tensor product.

The cohomological data can be calculated fixing a set of
representatives of isomorphism classes of the simple modules of $G_2\#
k^{X\times Y}$.
\end{enumerate}
\end{remark}
Ostrik's description of fiber functors over $\C(G,\omega;X,\alpha_X)$ \cite[Corollary 3.1]{ostrik}, can be reformulated using Corollary \ref{corolario alternativo teo de ostrik} as follows. The fiber functors of $\C(G,\omega;X,\alpha_X)$  are classified by equivalence classes of $\vect_{G}^\omega$-module categories $\M(Y,\alpha_Y)$ such that $G$ acts transitively on $X\times Y$ and the twisted group algebra $k_{\alpha_{(x,y)}}[\operatorname{Stab_G((x,y))}]$ is simple for some pair $(x,y)$ (and thus for every pair), where  $\alpha_{(x,y)}(g,h)=\mu_X(g,h,x)\mu_Y(g,h,y)$.  By Tannaka forma\-lism there is a unique (up to isomorphism) Hopf algebra $H(G,\omega;X,\mu_X,Y,\mu_Y)$ such that $\operatorname{Corep(H(G,\omega;X,\mu_X,Y,\mu_Y))}=\C(G,\omega;X,\mu_X)$ and which satisfies a certain universal property, \cite{Tann}. 

The Hopf algebras $H(G,\omega;X,\mu_X,Y,\mu_Y)$ are called group-theoretical and they are very important in the theory of semisimple Hopf algebra, since they include abelian extensions and twisting of groups algebras among others. 

\begin{theorem}
Two group-theoretical Hopf algebras $$H(G,\omega;X,\mu_X,Y,\mu_Y)  \text{ and   } H(G',\omega';X',\mu_{X'},Y',\mu_{Y'})$$ are isomorphic if and only if there exists an invertible $(\vect_{G}^{\omega},\vect_{G'}^{\omega'})$-bimodule category $\mathcal{S}$  such that
\[\mathcal{S}\boxtimes_{\vect_{G'}^{\omega'}}\M(X',\mu_{X'})\cong \M(X,\mu_{X}) \  \text{ and }\  \mathcal{S}\boxtimes_{\vect_{G'}^{\omega'}}\M(Y',\mu_{Y'})\cong \M(Y,\mu_{Y})\] as $\vect_{G}$-module categories.
\end{theorem}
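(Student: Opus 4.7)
The plan is to reduce the statement to Theorem \ref{description of Funct} via Tannakian reconstruction. First, I would note that by the universal property of the Hopf algebra $H(G,\omega;X,\mu_X,Y,\mu_Y)$, this Hopf algebra is determined, up to isomorphism, by the pair consisting of the tensor category $\C(G,\omega;X,\mu_X)=(\vect_G^\omega)^*_{\M(X,\mu_X)}$ and the fiber functor $F_Y:\C(G,\omega;X,\mu_X)\to\vect$ whose parametrizing $\vect_G^\omega$-module category is $\M(Y,\mu_Y)$. Consequently, an isomorphism of Hopf algebras $H(G,\omega;X,\mu_X,Y,\mu_Y)\cong H(G',\omega';X',\mu_{X'},Y',\mu_{Y'})$ is equivalent to the existence of a tensor equivalence $\Phi:\C(G,\omega;X,\mu_X)\to\C(G',\omega';X',\mu_{X'})$ together with a monoidal natural isomorphism $F_{Y'}\circ\Phi\cong F_Y$.

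Next, I would feed $\Phi$ into Theorem \ref{description of Funct}. Under the contravariant equivalence $\mathcal{K}$, such a $\Phi$ corresponds to an invertible arrow in $\mathfrak{Cor}$ between $(\vect_{G'}^{\omega'},\M(X',\mu_{X'}))$ and $(\vect_G^\omega,\M(X,\mu_X))$. By Proposition \ref{equivalencias en Cor}, the bimodule component of this arrow is an invertible $(\vect_{G'}^{\omega'},\vect_G^\omega)$-bimodule category; passing to its bimodule inverse $\mathcal{S}$ (a $(\vect_G^\omega,\vect_{G'}^{\omega'})$-bimodule) and composing the accompanying equivalence with the inverse gives the required equivalence $\mathcal{S}\boxtimes_{\vect_{G'}^{\omega'}}\M(X',\mu_{X'})\cong \M(X,\mu_{X})$ of left $\vect_G^\omega$-module categories. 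This takes care of the first half of the bimodule condition.

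For the condition involving $Y$ and $Y'$, I would identify each fiber functor $F_Y$, $F_{Y'}$ with its image under $\mathcal{K}$: an arrow in $\mathfrak{Cor}$ from $(\vect,\vect)$ to $(\vect_G^\omega,\M(X,\mu_X))$ (respectively to $(\vect_{G'}^{\omega'},\M(X',\mu_{X'}))$), whose bimodule component is the right-module version of $\M(Y,\mu_Y)$ and $\M(Y',\mu_{Y'})$ respectively; this identification is exactly the content of Ostrik's classification reformulated in the paragraph preceding the theorem. Contravariance of $\mathcal{K}$ converts the monoidal isomorphism $F_{Y'}\circ\Phi\cong F_Y$ into a relation between $\odot$-composites in $\mathfrak{Cor}$, and reading off the first component via the explicit formula for $\odot$ yields, after passing to opposites, a $\vect_G^\omega$-module equivalence $\mathcal{S}\boxtimes_{\vect_{G'}^{\omega'}}\M(Y',\mu_{Y'})\cong \M(Y,\mu_{Y})$. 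Running these identifications backwards produces the converse implication, since by Proposition \ref{equivalencias en Cor} invertibility of $\mathcal{S}$ guarantees that the $\mathfrak{Cor}$-arrow it defines is an equivalence, hence gives rise via $\mathcal{K}^{-1}$ to a tensor equivalence $\Phi$, while the second bimodule equivalence forces the fiber functors to match.

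The main obstacle is the bookkeeping in the third step: one must verify carefully that the passage from a fiber functor on $\C(G,\omega;X,\mu_X)$ to its $\mathfrak{Cor}$-image genuinely identifies the underlying bimodule with $\M(Y,\mu_Y)$ up to the left/right and opposite conventions, and that the $\boxtimes_{\vect_{G'}^{\omega'}}$ appearing in the $\odot$-composition of $\mathfrak{Cor}$ coincides with the tensor product of module categories used in the theorem statement. Once these identifications are pinned down, both implications fall out mechanically from Theorem \ref{description of Funct} combined with Proposition \ref{equivalencias en Cor}.
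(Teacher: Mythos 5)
Your proposal is correct and follows essentially the same route as the paper: Tannaka formalism reduces the Hopf algebra isomorphism to a tensor equivalence plus a compatibility of fiber functors, Proposition \ref{equivalencias en Cor} converts the equivalence into an invertible bimodule category satisfying the condition on $\M(X,\mu_X)$, and Theorem \ref{description of Funct} (via $\mathcal{K}$ and the composition in $\mathfrak{Cor}$) turns the fiber-functor compatibility into the condition on $\M(Y,\mu_Y)$. The only difference is that you spell out the bookkeeping with $\mathcal{K}$, opposites, and $\odot$-composites that the paper leaves implicit when it cites these results.
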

\begin{proof}
Let $H(G,\omega;X,\mu_X,Y,\mu_Y)$ and $H(G',\omega';X',\mu_{X'},Y',\mu_{Y'})$ be group-theoretical Hopf algebras. By Tannaka formalism the Hopf algebras are isomorphic if and only if there is a tensor equivalence  $F:\C(G,\omega;X,\mu_X)\to \C(G',\omega';X',\mu_{X'})$ such that the diagram

\begin{equation}\label{diagrama final}
\xymatrix{\C(G,\omega;X,\mu_X) 
\ar[rr]^{F} \ar[dr]_{U(Y, \mu_{Y})}& &
\C(G',\omega';X',\mu_{X'}) \ar[dl]^{U(Y', \mu_{Y'})} & \\
& \vect & }
\end{equation}
commutes, where $U(Y,\mu_{Y})$ ($U(Y',\mu_{Y'})$, respectively) is the fiber functor of $\C(G,\omega;X,\mu_X)$ ($\C(G',\omega';X',\mu_{X'})$, respectively) associated to the left (right, respectively) rank one module category $\Fun_{\vect_{G}^\omega}(\M(X,\mu_X), \M(Y,\mu_Y))$ ($\Fun_{\vect_{G'}^{\omega'}}(\M(X',\mu_{X'}), \M(Y',\mu_{Y'}))$, respectively). 

By Proposition \ref{equivalencias en Cor}, tensor equivalences from $\C(G,\omega;X,\mu_X)$ to $\C(G',\omega';X',\mu_{X'})$ are in correspondence with $(\vect_{G}^{\omega},\vect_{G'}^{\omega'})$-bimodule categories $\mathcal{S}$  such that
\[\mathcal{S}\boxtimes_{\vect_{G'}^{\omega'}}\M(X',\mu_{X'})\cong \M(X,\mu_{X})\] as $\vect_{G}$-module categories. 

By Theorem \ref{description of Funct}, the diagram \eqref{diagrama final} commutes if and only if
\[\mathcal{S}\boxtimes_{\vect_{G'}^{\omega'}}\M(Y',\mu_{Y'})\cong \M(Y,\mu_{Y})\] as $\vect_{G}$-module categories.

\end{proof}
\section{Appendix: Module categories over pointed fusion categories}\label{Apendice}

Recall that a fusion category is called
\emph{group-theoretical} if it is
Morita equivalent to a pointed fusion category.
See \cite{ostrik, ENO} for more details about group-theoretical fusion categories.

In this appendix, we use the theory of $G$-sets to give alternative descriptions to the characterization
of indecomposable mo\-dule categories over pointed fusion categories \cite{ostrik1},
group-theoretical fusion categories \cite{ENO} and pointed module categories \cite{Naidu},
using the theory of $G$-sets. These alternative descriptions  are useful, for example, to describe  tensor products of bimodule categories (see Section \ref{Seccion bimodules de pointed fusion cat}).
We will follow the notation of Subsection \ref{Preliminares de cohomologia}.

\subsection{The 2-category of $(G,\omega)$-sets with twist.}
In this subsection, we will define  a 2-category biequivalent to the  2-subcategory of
$\mathfrak{M}_l(\vect_G^\omega)$, where objects are all left module categories over
$\vect_G^\omega$, $1$-cells are  $\vect_G^\omega$-module functors that maps simple objects to simple objects and
$2$-cells are $\vect_G^\omega$-linear natural isomorphisms.

We will fix a finite group $G$ and a $3$-cocycle $\omega\in
Z^3(G,\Tt)$, that is a function $\omega:G\times G\times G\to \Tt$
such that
$$\omega(\sigma\tau,\rho,\phi)\omega(\sigma,\tau,\rho\phi)=\omega(\sigma,\tau,\rho)\omega(\sigma,\tau\rho,\phi)\omega(\tau,\rho,\phi),$$
for all $\sigma, \tau, \rho, \phi\in G$.  

The $2$-category
of finite $(G,\omega)$-sets with twist is defined as follows:
\begin{enumerate}[leftmargin=*]
\item Objects are pairs
$(X,\alpha)$, where $X$ is a finite left $G$-set and $\alpha\in
Z^2_G(X,\Tt)_\omega$. They will be called \emph{finite $(G,\omega)$-sets
with twist}.
\item Let $(X,\alpha_X),$ $(Y,\alpha_Y)$ be finite $(G,\omega)$-sets with twist. A $1$-cell from
$(X,\alpha_X)$ to  $(Y,\alpha_Y)$, also called a \emph{$G$-equivariant
map}, is a pair $(L,\beta)$,
$$\xymatrix{(X,\alpha_X) \ar[r]^{(L,\beta)} &(Y,\alpha_Y) },$$ where
\begin{itemize}
\item  $L: X\to Y$ is a morphism of left $G$-sets,
\item $\beta\in C^{1}(G,C^1(X,k^{\times}))$ such
that $\delta_G(\beta)= L^*(\alpha_Y)(\alpha_X)^{-1}$, that is a map
$$\beta:G\times X\to  \Tt$$
\end{itemize}such that $$\beta(\tau; x) \beta(\sigma\tau;x)^{-1} \beta(\sigma; \tau x)=\alpha_Y(\sigma,\tau;L(x))
\alpha_X(\sigma,\tau;,x)^{-1},$$ for all $\sigma, \tau \in G, x\in
X$.
\item Given two 1-cells $(L,\beta), (L',\beta'): (X,\alpha_X)\to (Y,\alpha_Y)$, a $2$-cell is a map $\theta:(L,\beta)\Rightarrow
(L',\beta')$
such that $\delta_G(\theta)= \beta'\beta^{-1}$, that is
$\theta: X\to \Tt$ such that $$\theta(x) \theta(\sigma x)^{-1}
=\beta'(\sigma;x)\beta(\sigma;x)^{-1},$$ for all $\sigma\in G, x\in
X$.
\end{enumerate}

Let $(F,\beta_F): (X,\alpha_X)\to (Y,\alpha_Y)$ and
$(K,\beta_K):(Y,\alpha_Y)\to (Z,\alpha_Z)$ be two 1-cells. Their composition is defined by
$$(K,\beta_K)\circ (F,\beta_F)=(K\circ F,
F^*(\beta_K)\beta_F):(X,\alpha_X)\to (Z,\alpha_Z).$$

If $\theta: (L,\beta) \Rightarrow (L', \beta')$ and $\theta':
(L',\beta') \Rightarrow (L'', \beta'')$ are $2$-cells, their
composition is given by the product of the maps, namely
$$\theta'\circ \theta:=  \theta'\theta: (L,\beta)\Rightarrow (L'',\beta'').$$

Given a twisted $(G,\omega)$-set $(X,\mu),$ we can associate to it a left
$\vect_G^\omega$-module category $\M(X,\mu)$. As a $k$-linear category $\M(X,\mu)$ is the category of
$X$-graded vector spaces. The $\vect_G^\omega$-action is the following
\begin{align*}
\otimes :\vect_G^\omega\boxtimes \M(X,\mu) &\to \M(X,\mu)\\
 k_\sigma\boxtimes k_x &\mapsto k_{\sigma x},
\end{align*} and associativity constraints are $$\mu_{\sigma,\tau, x}=\mu(\sigma,\tau,x)\id_{k_{(\sigma\tau)x}}:(k_\sigma\otimes k_\tau)\otimes k_x\to k_\sigma\otimes( k_\tau\otimes k_x),$$
for all $\sigma,\tau \in G, x\in X$. 

A straightforward calculation
implies the following result.
\begin{proposition}
The $2$-category of twisted $(G,\omega)$-sets is biequivalent to the  2-subcategory of $\mathfrak{M}_l(\vect_G^\omega)$ where objects are module categories, 1-arrows are module functors that maps simple objects to simple objects, and 2-arrows are module natural isomorphisms.
\end{proposition}

In the literature group-theoretical fusion categories are usually
parameterized by data $(G,\omega, H,\psi)$, where $G$ is a finite
group, $\omega$ is a $3$-cocycle in $G$, $H\subset G$ is a subgroup
of $G$ and $\psi\in C^2(H,k^{\times})$ such that
$\delta(\psi)=\omega|_{H^{\times 3}}$. The group
theoretical fusion category $\C(G,\omega,H,\psi)$ is realized as the
category of $k_\psi[H]$-bimodules in $\vect_G^\omega$, see \cite{ostrik}. 

There is also an alternative description of group-theoretical fusion categories in terms of $G$-sets. Explicitly, group-theoretical fusion categories can be parametrized by the data
$(G,\omega,X,\mu)$, where $G$ is a finite group, $\omega \in
Z^3(G,k^{\times})$, $X$ is a transitive left $G$-set, and $\mu \in
Z^2_G(X,k^{\times})_\omega.$  We will denote by $\C(G,\omega,X,\mu):=
\END_{\vect_G^\omega}(\M(X,\mu))$ the group-theoretical fusion
category associated to these data.

Given $(G,\omega,X,\mu)$ and an element $x\in X$, the subgroup
$H:=\Stab_x(G)$ and the 2-cochain $\psi(h,h'):=\mu(h,h',x) $ define a
data $(G,\omega,H,\psi)$ such that $\C(G,\omega,X,\mu)\cong
\C(G,\omega,H,\psi)$ as fusion categories.

The group $\operatorname{BrPic}(\vect_G^\omega)$ acts on the set of equivalence classes of indecomposable $\vect_{G}^\omega$-module categories. The following proposition shows that oribts of this action correspond to equivalence classes of group-theoretical fusion categories over $\vect_{G}^\omega$.
\begin{corollary}
The fusion category $\C(G,\omega,X,\mu)$ is tensor equivalent to $ \C(G',\omega',X',\mu')$ if and only if there is an invertible  $\vect_G^{\omega}$-$\vect_{G'}^{\omega}$-bimodule category $\mathcal{S}$ such that \[\mathcal{S}\boxtimes_{\vect_{G'}^{\omega'}}\M(X',\alpha')\cong \M(X,\mu)\] as $\vect_{G}^{\omega}$-module categories.
\end{corollary}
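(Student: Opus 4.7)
The plan is to derive this corollary as an immediate consequence of the main theorem of the paper, Theorem \ref{description of Funct}, together with Proposition \ref{equivalencias en Cor}, which together reduce the question of when two Morita duals are tensor equivalent to the existence of an invertible bimodule category satisfying a compatibility condition.

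First I would unpack the definitions to recognize the setup as a dual fusion category problem. By construction, $\C(G,\omega,X,\mu) = \END_{\vect_G^\omega}(\M(X,\mu)) = (\vect_G^\omega)^*_{\M(X,\mu)}$, and similarly for $\C(G',\omega',X',\mu')$. Hence both objects $(\vect_G^\omega, \M(X,\mu))$ and $(\vect_{G'}^{\omega'}, \M(X',\mu'))$ lie in $\mathfrak{Funct}$, and a tensor equivalence between the two group-theoretical categories is precisely an invertible arrow in $\mathfrak{Funct}$ between these two objects.

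Next I would invoke Theorem \ref{description of Funct}: the contravariant equivalence $\mathcal{K}:\mathfrak{Funct}\to\mathfrak{Cor}$ (identity on objects) turns an invertible arrow in $\mathfrak{Funct}$ into an invertible arrow in $\mathfrak{Cor}$ from $(\vect_G^\omega,\M(X,\mu))$ to $(\vect_{G'}^{\omega'},\M(X',\mu'))$. By definition, such an arrow is represented by a pair $(\mathcal{S},\alpha)$, where $\mathcal{S}$ is a $(\vect_G^\omega,\vect_{G'}^{\omega'})$-bimodule category and $\alpha:\mathcal{S}\boxtimes_{\vect_{G'}^{\omega'}}\M(X',\mu')\to\M(X,\mu)$ is an equivalence of left $\vect_G^\omega$-module categories. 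Proposition \ref{equivalencias en Cor} then supplies the final ingredient: the associated arrow in $\mathfrak{Funct}$ is a tensor equivalence if and only if $\mathcal{S}$ is an invertible bimodule category. This yields both implications simultaneously.

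There is essentially no obstacle here: the work has been done in the earlier sections, and this corollary just specializes the general correspondence to the pointed case. The only point meriting brief comment is verifying that the converse direction genuinely produces a tensor equivalence, not merely a tensor functor; but this is exactly the content of Proposition \ref{equivalencias en Cor}, combined with the explicit formula $\mathcal{K}^{-1}((\mathcal{S},\alpha)) = \ad_\alpha\circ L^*$ from the proof of Lemma \ref{Lemma K is faithful and full}, which shows that an invertible $\mathcal{S}$ together with any equivalence $\alpha$ yields a tensor equivalence between the duals. Hence the two conditions in the statement are equivalent, concluding the proof.
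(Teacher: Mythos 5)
Your proposal is correct and follows the same route as the paper, whose proof is simply the observation that the statement is immediate from Proposition \ref{equivalencias en Cor} (together with Theorem \ref{description of Funct}, which you spell out explicitly). Your expansion — identifying the group-theoretical categories as duals, passing through $\mathcal{K}$, and using $\mathcal{K}^{-1}((\mathcal{S},\alpha))=\ad_\alpha\circ L^*$ for the converse — is exactly the reasoning the paper leaves implicit.
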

\begin{proof}
This follows immediately from Proposition \ref{equivalencias en Cor}.
\end{proof}

\subsection{Pointed module categories over $\vect_G^\omega$}\label{Subseccion module cat}
Let $\M(X,\alpha)$ be a left $\vect_G^\omega$-module category.  The group $\Aut_{\vect_G^\omega}(\M(X,\alpha))$ is the group
$\Inv(\C(G,\omega,X,\mu))$ of isomorphism classes of  invertible objects of $\C(G,\omega,X,\mu)$.

Note that every autoequivalence of module categories maps simple objects to simple simple objects, thus  $\Aut_{\vect_G^\omega}(\M(X,\alpha))$ is canonically isomorphic to the group of isomorphism classes of autoequivalence of the twisted $(G,\omega)$-set $(X,\alpha)$. The group $\Aut_G(X)$ of $G$-automorphism of $X$ acts naturally on
$H^2_G(X,\Tt)_\omega$. We will denote by
$\Aut_G(X,[\alpha])\subset \Aut_G(X)$ the stabilizer of $[\alpha]\in
H^2_G(X,\Tt)_\omega$. Thus, the group
$\Aut_{\vect_{G}^{\omega}}(\M(X,\alpha))$  fits into the exact
sequence
\begin{equation}\label{sucecion Aut}
1\to H^1_G(X,\Tt) \to
\Aut_{\vect_{G}^{\omega}}(\M(X,\alpha))\to
\Aut_G(X,[\alpha])\to 1.
\end{equation}
\begin{remark}\label{obs sobre auto y H1 de X transitivo}
When $X$ is a transitive $G$-set, Shapiro's Lemma defines a
group isomorphism $H^1_G(X,\Tt)\cong \Hom(\Stab(x),\Tt)$ and also $\Aut_G(X)\cong N_G(\Stab(x))/\Stab(x)$, for any $x\in X$.
\end{remark}

\begin{definition}\label{definicion pointed module category}\cite{Naidu}
Let $\C$ be a fusion category. A left $\C$-module category $\M$ is
called \emph{pointed} if the dual category $\C_\M^*$ is pointed.
\end{definition}
The following proposition gives an alternative but equivalent
description of  pointed module categories to the one given in \cite{Naidu}.
\begin{proposition}\label{prop clasificacion pointed module}
An indecomposable  $\vect_{G}^{\omega}$-module category $\M(X,\alpha)$ is pointed if and
only if
\begin{enumerate}
\item $[\alpha]$ is invariant under the action of $\Aut_G(X)$, and
\item the stabilizer of $X$ is a normal abelian subgroup of $G$.
\end{enumerate}
\end{proposition}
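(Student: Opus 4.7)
The plan is to prove the proposition via a clean counting/inequality argument involving the group $\Aut_{\vect_G^\omega}(\M(X,\alpha))$, bypassing any detailed manipulation of the cocycle $\alpha$.

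First I would reformulate pointedness in numerical terms. Since $\END_{\vect_G^\omega}(\M(X,\alpha))$ is Morita equivalent to $\vect_G^\omega$, its Frobenius--Perron dimension equals $|G|$. Writing $\sum_S (\FPdim S)^2 = |G|$ over the simple objects and using $\FPdim S \geq 1$ together with the fact that $\FPdim S = 1$ exactly when $S$ is invertible, one obtains
\[
|\Inv(\END_{\vect_G^\omega}(\M(X,\alpha)))| \;\leq\; \#\{\text{simples}\} \;\leq\; |G|,
\]
and the category is pointed if and only if both inequalities are equalities, equivalently iff the number of invertible objects equals $|G|$. Invertible objects of the dual are precisely the $\vect_G^\omega$-module autoequivalences of $\M(X,\alpha)$, so the task reduces to showing that $|\Aut_{\vect_G^\omega}(\M(X,\alpha))| = |G|$ if and only if the two stated conditions hold.

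Next I would feed this into the exact sequence \eqref{sucecion Aut} and Remark \ref{obs sobre auto y H1 de X transitivo}. Fixing $x_0\in X$ with $H:=\Stab(x_0)$, Shapiro's lemma gives $H^1_G(X,k^*) \cong \Hom(H,k^*) \cong \widehat{H^{ab}}$, and $\Aut_G(X)\cong N_G(H)/H$. Because $k$ is algebraically closed of characteristic zero, $|\widehat{H^{ab}}| = |H^{ab}|$. Multiplicativity of orders in the short exact sequence then yields
\[
|\Aut_{\vect_G^\omega}(\M(X,\alpha))|
= |H^{ab}|\cdot |\Aut_G(X,[\alpha])|
\;\leq\; |H^{ab}|\cdot |N_G(H)/H|
\;\leq\; |H|\cdot \tfrac{|G|}{|H|}
= |G|.
\]

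Equality throughout is then equivalent to the conjunction of three conditions: $H^{ab}=H$ (i.e., $H$ is abelian), $N_G(H) = G$ (i.e., $H$ is normal in $G$), and $\Aut_G(X,[\alpha]) = \Aut_G(X)$ (i.e., $[\alpha]$ is invariant under $\Aut_G(X)$). Combining with the first paragraph, pointedness of $\M(X,\alpha)$ is equivalent to the two bullet points of the proposition.

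The only technical ingredients are standard: the Morita invariance of $\FPdim$ of a fusion category, the identification of invertible objects of $\END_\C(\M)$ with $\Aut_\C(\M)$, and the Shapiro identification $H^1_G(G/H, k^*) = \Hom(H, k^*)$. The rest is order arithmetic on a four-term exact sequence, so no real obstacle is expected; the slightly delicate point is observing that the exact sequence \eqref{sucecion Aut} immediately gives an equality of orders, so the three equality conditions decouple cleanly into the two bullets of the statement.
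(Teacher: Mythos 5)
Your proposal is correct and follows essentially the same route as the paper: both reduce pointedness to the equality $|\Aut_{\vect_G^\omega}(\M(X,\alpha))|=|G|$ via Morita invariance of the Frobenius--Perron dimension and the identification of invertible objects of the dual with module autoequivalences, and then run the same order count on the exact sequence \eqref{sucecion Aut} using Shapiro's lemma and $\Aut_G(X)\cong N_G(F)/F$. The only difference is that you spell out the FPdim counting step (pointed iff the number of invertibles equals $|G|$) which the paper leaves implicit.
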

\begin{proof}
Let $F$ be the stabilizer of a point
$x\in X$. Then $H^1_G(X,\Tt)\cong \Hom(F,\Tt)$ and $\Aut_G(X)\cong
N_G(F)/F$, see Remark \ref{obs sobre auto y H1 de X transitivo}.  From the exactness of the sequence
\eqref{sucecion Aut} it follows that
\begin{align*}
|\Aut_{\vect_{G}^{\omega}}(\M(X,\alpha))| &= |H^1_G(X,\Tt)||\Aut_G(X,[\alpha])|\\
&\leq  |\Hom(F,\Tt)||\Aut_G(X)|\\
& \leq |F||N_G(F)/F|=|N_G(F)|\\
&\leq |G|.
\end{align*}
Since the Frobenius-Perron dimension of a fusion category is invariant under categorical Morita equivalence \cite[Theorem 2.15]{ENO}, the module category $\M(X,\alpha)$ is pointed if and only if
$$|\Aut_{\vect_{G}^{\omega}}(\M(X,\alpha))|=|G|.$$ The equality is equivalent
to $|\Hom(F,\Tt)|=|F|$, $|N_G(F)|=|G|$ and
$\Aut_G(X)=\Aut_G(X,[\alpha])$. Thus $F$ must be abelian and normal,
and $[\alpha]$  invariant by $\Aut_G(X)$.  The converse also follows from the exact sequence \eqref{sucecion Aut}.
\end{proof}

\end{document}